\documentclass[a4paper,reqno,12pt]{amsart}
\usepackage{amsmath}
\usepackage{amssymb}
\usepackage{amsthm}
\usepackage[noadjust]{cite}
\usepackage{mathrsfs}
\usepackage[all,cmtip]{xy}
\usepackage{cite}
\usepackage{cases}
\usepackage{enumitem}
\usepackage{indentfirst}
\usepackage{hyperref}
\usepackage{comment}

\usepackage{geometry}
\usepackage{xy}
\allowdisplaybreaks[4]

\newtheorem{thm}{Theorem}[section]
\newtheorem{prop}[thm]{Proposition}

\newtheorem{lem}[thm]{Lemma}

\newtheorem{cor}[thm]{Corollary}

\theoremstyle{definition}
\newtheorem{definition}[thm]{Definition}
\newtheorem{example}[thm]{Example}

\theoremstyle{remark}
\newtheorem{remark}[thm]{Remark}

\numberwithin{equation}{section}

\newcommand{\bN}{\mathbb{N}}
\newcommand{\bP}{\mathbb{P}}
\newcommand{\bQ}{\mathbb{Q}}
\newcommand{\bR}{\mathbb{R}}
\newcommand{\bZ}{\mathbb{Z}}

\newcommand\cE{\mathcal{E}}
\newcommand\cF{{\mathcal{F}}}
\newcommand\cG{{\mathcal{G}}}

\newcommand\cI{{\mathcal{I}}}

\newcommand\cO{{\mathcal{O}}}
\newcommand\cP{\mathcal{P}}
\newcommand\cQ{{\mathcal{Q}}}

\newcommand{\rounddown}[1]{\lfloor{#1}\rfloor}
\newcommand{\roundup}[1]{\lceil{#1}\rceil}

\newcommand{\ff}{{f^{\prime}}}
\newcommand{\BB}{{B^{\prime}}}

\newcommand{\DD}{{D^{\prime}}}
\newcommand{\FF}{{F^{\prime}}}

\newcommand{\MM}{{M^{\prime}}}

\newcommand{\UU}{{U^{\prime}}}

\newcommand{\XX}{{X^{\prime}}}
\newcommand{\YY}{{Y^{\prime}}}
\newcommand{\ZZ}{{Z^{\prime}}}

\newcommand{\ZZZ}{Z^{\prime \prime}}

\newcommand{\ZZZZ}{Z^{\prime \prime \prime}}

\newcommand{\MMi}{{M^{\prime}_i}}

\newcommand{\mult}{\operatorname{mult}}
\newcommand{\Nklt}{\operatorname{Nklt}}

\newcommand{\Supp}{\operatorname{Supp}}

\newcommand{\vol}{\operatorname{vol}}
\newcommand{\Ivol}{\operatorname{Ivol}}
\newcommand{\Div}{\operatorname{Div}}
\newcommand{\Nlc}{\operatorname{Nlc}}

\hypersetup{
	colorlinks=true,
	citecolor=red,
	linkcolor=blue,
	urlcolor=green,
}

\begin{document}
\title{ON THE DCC PROPERTY OF IITAKA VOLUME WITH REAL COEFFICIENTS AND GENERALISED PAIRS}
\date{\today}

\author{PINXIAN BIE}
\address{PINXIAN BIE, School of Mathematical Sciences, Fudan University, Shanghai, 200433, China}
\email{23110180002@m.fudan.edu.cn}

\begin{abstract}
We investigate the DCC property of the set of Iitaka volumes of a given set of pairs of varieties. We both generalize previous results of Birkar and Li about usual pairs to the real coefficient case, and also establish similar results on generalised pairs, where some natural boundedness assumptions are required for technical reasons.
\end{abstract}

\keywords{Iitaka volume, boundedness, generalised pairs}
\subjclass[2020]{14J10,14J17,14E30}
\maketitle
\pagestyle{myheadings} \markboth{\hfill P.X.~Bie
	\hfill}{\hfill On the DCC property of Iitaka volume with real coefficients and generalised pairs\hfill}

\tableofcontents

\section{Introduction}
We work over an algebraically closed field $k$ of characteristic zero.
~\\

Boundedness properties of a given set of algebraic varieties are extensively studied in recent years. Among all boundedness questions, a basic question is about the DCC property of the Iitaka volume of the log canonical divisor $K_X+B$ of a given set of pairs. To be more precise, if we are given a set of pairs
$$\mathcal{I}_{lc}(d, \Phi)=\{(X,B)|(X,B) \text {is projective lc}, \mathrm{dim}X=d, B\in \Phi\}$$
where $B\in \Phi$ means that the coefficients of $B$ lie in a fixed DCC set $\Phi$. Then we ask whether the corresponding set of Iitaka volume
$$ \{\mathrm{Ivol}(K_X+B)|(X,B)\in \mathcal{I}_{lc}(d, \Phi)\}$$
is also a DCC set depending only on $d$ and $\Phi$.

When $K_X+B$ is big, the Iitaka volume is just the usual volume and we may denote it by $\vol(K_X+B)$. An important known fact is that if $\Phi$ satisfies the DCC, the set of usual volumes also satisfies DCC (\cite{ACCLCT}). It's natural to ask whether similar phenomena occurs for the intermediate Kodaira dimension case, and many partial results have been established in recent years. 

Zhan Li first proved a special case in \cite{Iitakavolume} when $(X,B)$ has $\epsilon$-lc singularities and the boundary $B$ is a big $\bQ$-divisor over $Z$, where $X\rightarrow Z$ is a contraction with $K_X+B\sim_{\bQ}0/Z$. Birkar proved a more general version in \cite{BVGP}, where instead of the Fano type assumption on the fibration, he showed the DCC for the set of Iitaka volumes of lc-trivial fibrations with a given ample $\bZ$-divisor $A$ of fixed volume on the general fiber. This additional condition is natural and it implies boundedness of the general fiber. Chen, Han and Liu proved the DCC property in dimension $d\leq 3$ without any extra assumptions in \cite{CHL24}, where their proof relied on the existence of good minimal model and existence of $n$-complements in low dimensions.

Most of the above results are restricted to $\bQ$-divisors and the extension to $\bR$-divisors is not trivial. For $\bR$-divisors, the Iitaka volume doesn't behave well enough, and we introduce the invariant Iitaka volume instead. It's expected that invariant Iitaka volume also has the DCC property. In this paper we shall prove that under suitable conditions, the DCC property of invariant Iitaka volume indeed holds for $\bR$-divisors.

\begin{definition}\label{DEF1}(\cite{BVGP})
Let $d \in \mathbb{N}$, $\Phi \subset \mathbb{R}^{\geq 0}$, and $u \in \mathbb{R}^{\geq 0}$. Let $\mathcal{I}_{lc}(d, \Phi, u)$ be the set of projective pairs $(X, B)$ such that
\begin{itemize}
    \item $(X, B)$ is lc of dimension $d$,
    \item the coefficients of $B$ are in $\Phi$,
    \item $f: X \to Z$ is a contraction with $K_X + B \sim_{\mathbb{R}} 0/Z$,
    \item $\kappa(K_X + B) = \dim Z$, and
    \item there is an effective $\bR$-divisor $A \geq 0$ on $X$ such that $A\in \Phi$  and over some non-empty open subset of $Z$: $(X, B + tA)$ is lc for some $t > 0$ and $A$ is ample,
    \item $\operatorname{vol}(A|_F) = u$ for the general fibres $F$ of $f$.
\end{itemize}
Define $\mathcal{I}_{lc}(d, \Phi, <u)$ similarly by replacing the condition $\operatorname{vol}(A|_F) = u$ with $\operatorname{vol}(A|_F) < u$. And define $\mathcal{I}_{klt}(d, \Phi, u)$ and $\mathcal{I}_{klt}(d, \Phi, <u)$ similarly by replacing the lc condition of $(X, B)$ with klt.

\end{definition}

\begin{thm}\label{MAIN THM1}
  Let $d\in \bN$, $\Phi \subset \bR^{\geq0}$ be a DCC set, and $u\in \bR^{>0}$. Then the sets of invariant Iitaka volumes 
  $$\{\mathrm{Ivol}_{\iota}(K_X+B)|(X,B)\in \mathcal{I}_{lc}(d, \Phi, u)\}$$
  and
  $$\{\mathrm{Ivol}_{\iota}(K_X+B)|(X,B)\in \mathcal{I}_{klt}(d, \Phi, <u)\}$$
  satisfy the DCC property.
\end{thm}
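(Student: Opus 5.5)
We argue by contradiction, following Birkar's strategy in \cite{BVGP} for $\bQ$-coefficients and adapting it to real coefficients through the invariant Iitaka volume. Suppose there is a sequence $(X_i,B_i)$ in $\mathcal{I}_{lc}(d,\Phi,u)$, with contractions $f_i\colon X_i\to Z_i$ and divisors $A_i$, such that $\Ivol_\iota(K_{X_i}+B_i)$ is strictly decreasing. Since $\Phi$ is DCC, we may pass to a subsequence so that the coefficients of $B_i$ and $A_i$ behave monotonically.

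The first step is to control the general fibres. On the general fibre $F_i$ we have $K_{F_i}+B_{F_i}\sim_\bR 0$ and $A_{F_i}$ ample with $\vol(A_{F_i})=u$. Moreover, $(F_i,B_{F_i}+tA_{F_i})$ is lc and the coefficients lie in $\Phi$. We use the ACC for lc thresholds and the global ACC to replace $\Phi$ by a finite set on the horizontal part. Then, as in \cite{BVGP} (boundedness of lc Calabi--Yau pairs with a polarisation of fixed volume, via boundedness of stable pairs), the pairs $(F_i,\Supp(B_{F_i}+A_{F_i}))$ form a bounded family. In the klt case with $\vol<u$, we use instead the DCC of volumes to conclude that $u$ can be taken from a finite set after perturbation, or we treat $\vol(A_{F_i})$ as lying in a DCC set bounded by $u$ and apply the same boundedness.

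The second step is the canonical bundle formula with real coefficients. Write
\[
K_{X_i}+B_i\sim_\bR f_i^*(K_{Z_i}+B_{Z_i}+M_{Z_i}),
\]
with $(Z_i,B_{Z_i}+M_{Z_i})$ a generalised lc pair. Here $\Ivol_\iota(K_{X_i}+B_i)=\vol(K_{Z_i}+B_{Z_i}+M_{Z_i})$. We decompose $B_i$ as a finite $\bR$-linear combination $\sum r_j B_i^{(j)}$ of $\bQ$-boundaries with fixed real $r_j$, each still satisfying $K+B^{(j)}\sim_\bQ 0/Z$ over the generic point. This uses the fibre boundedness from the first step and a uniform rational polytope argument. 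By fibre boundedness, the Cartier index $p$ of each moduli part $M^{(j)}$ on a high model is bounded, since it is determined by the bounded general fibre. The discriminant coefficients lie in a DCC set, because they are computed from lc thresholds of fibres with DCC coefficients.

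The third step reduces to the DCC of volumes of generalised pairs on $Z_i$ whose boundary coefficients lie in a DCC set and whose nef part is $\sum r_j M^{(j)}$ with $pM^{(j)}$ b-Cartier. We then invoke the DCC for volumes of generalised lc pairs (Birkar--Zhang), which extends to real combinations of such nef parts. This yields the contradiction.

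I expect the main obstacle to be the second step. There we must uniformly control the $\bR$-moduli part and show that it is a fixed real combination of b-nef $\bQ$-parts of bounded Cartier index. This requires a uniform rational polytope decomposition that is compatible with the lc-trivial fibration structure across the whole sequence. My first attempt will be to apply the polytope lemma on the bounded family of general fibres and then spread it out over $Z_i$ using the invariance of $\kappa_\iota$ under small perturbations.
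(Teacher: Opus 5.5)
\paragraph{Gap in Step 2: the decomposition.}
Your overall plan matches the paper's: bounded general fibres, then a decomposition into $\bQ$-pieces with fixed real weights, then bounded Cartier index for the moduli parts, then DCC of volumes of generalised pairs via \cite[Theorem 1.3]{BVGP}. But Step 2 has a genuine gap exactly where you expect it, and the fix you propose does not close it.

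The uniform decomposition $K_X+B=\sum_j r_j(K_X+B^{(j)})$ needs fixed weights, $K_X+B^{(j)}\sim_{\bQ}0/Z$, and bounded denominators (Lemma \ref{Decomposition of usual div}). It requires \emph{all} coefficients of $B$ to lie in a finite set. Global ACC makes only the horizontal coefficients finite. The vertical coefficients of the $B_i$ range over an infinite DCC set, while finitely many fixed weights combined with rational pieces of bounded denominator give only finitely many values.

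Decomposing on the general fibre and ``spreading out'' does not give lc-trivial fibrations $(X,B^{(j)})\to Z$. To force $K_X+B^{(j)}\sim_{\bQ}0/Z$ globally you would have to add uncontrolled, typically non-effective, vertical divisors, and these fall outside the index bounds of \cite{BVGP}. So neither the $\mathbf{M}_j$ nor the identity $\mathbf{M}=\sum_j r_j\mathbf{M}_j$ is available. Invariance of $\kappa_\iota$ under perturbation says nothing about the moduli b-divisor.

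The missing idea is that the moduli part depends only on the fibration near the generic fibre, so the vertical coefficients can be discarded. The paper does this as follows.
\begin{enumerate}
\item On a log resolution $X'\to Z'$, set $\Gamma'=\tilde{B}^h+E^h+(f')^{-1}L$. Here $\tilde{B}^h$ is the birational transform of the horizontal part of $B$, $E^h$ is the horizontal reduced exceptional divisor, and $L\subset Z'$ is a reduced divisor containing the images of all vertical components. Then $\Gamma'$ has coefficients in a finite set.
\item Run an MMP on $K_{X'}+\Gamma'$ over $Z'$. It terminates over $Z'\setminus L$ by \cite{Bir12} and then reaches a good minimal model by \cite{Hashizume19}. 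The result is $(V,\Gamma_V)\to Z''$ with $Z''\to Z'$ birational and $K_V+\Gamma_V\sim_{\bR}0/Z''$.
\item $\Gamma_V-B_V$ is a pullback from $Z''$, so only the discriminant changes. The finite-coefficient case (Lemma \ref{finite effective adjunction}) applied to $(V,\Gamma_V)$ then gives the moduli part of $(X,B)$ in the required form.
\end{enumerate}

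\paragraph{Two further unproved steps.}
First, ``the Cartier index of $M^{(j)}$ is determined by the bounded general fibre'' is not an argument. Fibre boundedness gives, through \cite[Lemma 7.2]{BVGP}, a bounded $q$ with $q(K_F+B^{(j)}_F)\sim 0$, hence $q(K_X+B^{(j)})\sim qf^*(K_Z+B^{(j)}_Z+M^{(j)}_Z)$. Bounding the index of $M^{(j)}_{Z'}$ on a high model is a different matter: it is an effective b-nefness statement that needs control of the degenerate fibres. That is \cite[Lemma 7.4]{BVGP} (or \cite[Lemma 3.1]{Zhu25} when $A$ is an $\bR$-divisor), whose proof bounds fibre multiplicities over codimension one points of $Z$ in terms of $\vol(A|_F)$.

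Second, in the klt case a DCC set bounded above by $u$ need not be finite, so DCC of volumes does not let you fix $u$. Also, the stable-pair boundedness \cite{HMX18} you use in the lc case is for fixed volume. What the klt hypothesis buys is that the general fibres are $\epsilon$-lc for a fixed $\epsilon$ \cite[Lemma 2.48]{BABI}. They are therefore bounded assuming only $\vol(A|_F)<u$ \cite[Theorem 6.2]{GOPV}. The paper then shows $\vol(A|_F)$ takes finitely many values, by decomposing $K_F+B_F+\lambda A_F$ into Cartier pieces with fixed weights, and so reduces to the lc case. Alternatively one can quote \cite[Theorem 3.3]{Zhu25}.
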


Another natural question is to ask if DCC property of Iitaka volume holds for generalised pairs $(X, B+M)$ as well. To be more precise, if the coefficients of $B$ belong to a fixed DCC set $\Phi$ and the nef part $M'=\sum \mu_jM'_j$ where $M'_j$ are nef Cartier and $\mu_j\in \Phi$, then we are asked whether the set $\{\mathrm{Ivol}(K_X+B+M)|(X,B+M) \in \mathcal{I}_{glc}(d,\Phi)\} $ is DCC or not. Birkar first proved the case when $(X, B+M)$ is g-lc and $K_X+B+M$ is big in \cite{BVGP}. In general, the generalised pairs case is much more subtle. In fact, Birkar and Hacon showed in \cite{VGP} that without any extra assumptions the DCC of Iitaka volume of generalised pairs may fail. So it's necessary to impose some additional conditions in this case. One may wish to give similar assumption as Birkar did in the usual pair case to ensure boundedness of the general fibers and follow through the original proof in \cite{BVGP}, but there are still some issues and we can only obtain some weaker results here.

\begin{definition}\label{DEF2}
    Let $d,q \in \mathbb{N}$, $\Phi \subset \mathbb{Q}^{\geq 0}$, and $u \in \mathbb{Q}^{\geq 0}$. Let $\mathcal{I}_{glc}(d, \Phi,q, u)$ be the set of projective generalised pairs $(X, B+M)$ with data $\phi:X'\rightarrow X$ and $M'$ such that
\begin{itemize}
    \item $(X, B+M)$ is generalised lc of dimension $d$,
    \item the coefficients of $B$ are in $\Phi$,
    \item $qM'$ is nef Cartier,
    \item $f: X \to Z$ is a contraction with $K_X + B +M \sim_{\mathbb{Q}} 0/Z$,
    \item $\kappa(K_X + B +M) = \dim Z$, 
    \item we have a generalised adjunction formula
    $$q(K_X+B+M)\sim qf^*(K_Z+B_Z+M_Z), $$ and
    \item there is an effective $\bQ$-divisor $A \geq 0$ on $X$ such that $A\in \Phi$ and over some non-empty open subset of $Z$: $(X, B + tA +M)$ is generalised lc for some $t > 0$ and $A$ is ample,
    \item $\operatorname{vol}(A|_F) = u$ for the general fibres $F$ of $f$.
\end{itemize}
Define $\mathcal{I}_{glc}(d, \Phi, q, <u)$ similarly by replacing the condition $\operatorname{vol}(A|_F) = u$ with $\operatorname{vol}(A|_F) < u$. And define $\mathcal{I}_{gklt}(d, \Phi,q, u)$ and $\mathcal{I}_{gklt}(d, \Phi,q, <u)$ similarly by replacing the glc condition of $(X, B+M)$ with gklt. When $\Phi \subset \bR^{\geq0}$, we define the above sets in a similar way.

Define $\mathcal{I}_{glc}^{ft}(d, q,\Phi)$ by replacing the last three conditions in $\mathcal{I}_{glc}(d, \Phi, q, u)$ by 
\begin{itemize}
    \item $X$ is of Fano type over $Z$, i.e. $-K_X$ is big over $Z$.
\end{itemize}
Define $\mathcal{I}_{gklt}^{ft}(d, \Phi)$ similarly by replacing the glc condition of $(X, B+M)$ with gklt.
\end{definition}

\begin{thm}\label{MAIN THM2}
  Let $d,q\in \bN$, $\Phi \subset \bQ^{\geq0}$ be a DCC set, and $u\in \bQ^{>0}$. Then the set
  $$\{\mathrm{Ivol}(K_X+B+M)|(X,B)\in \mathcal{I}_{glc}(d, \Phi,q, <u)\}$$
  satisfy the DCC property.  
\end{thm}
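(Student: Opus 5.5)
Since $q(K_X+B+M)\sim qf^*(K_Z+B_Z+M_Z)$, the Iitaka volume of $K_X+B+M$ equals $\vol(K_Z+B_Z+M_Z)$, where $K_Z+B_Z+M_Z$ is big on $Z$. The plan is to reduce Theorem~\ref{MAIN THM2} to the DCC of volumes of big generalised lc pairs of bounded dimension, which is Birkar's result in \cite{BVGP}. This requires showing that, as $(X,B+M)$ ranges over $\mathcal{I}_{glc}(d,\Phi,q,<u)$:
\begin{itemize}
\item the discriminant part $B_Z$ has coefficients in a DCC set $\Psi$ depending only on $d,\Phi,q,u$;
\item the moduli part $M_Z$ (on a high model $Z'$) is of the form $\frac1p N_{Z'}$ with $N_{Z'}$ nef Cartier and $p$ bounded.
\end{itemize}
Granting these two facts, $(Z,B_Z+M_Z)$ is a generalised lc pair in a family to which the big-case DCC applies, and the conclusion follows.

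\textbf{Step 1: bounding the general fibres.} Over a non-empty open set of $Z$, $A$ is ample and $(X,B+tA+M)$ is glc. Also $\vol(A|_F)<u$, and $A|_F$ has coefficients in the DCC set $\Phi$. Since $K_F+B_F+M_F\sim_\bQ 0$, the triples $(F,B_F+M_F)$ together with $A|_F$ form a log bounded family. One argues as in Birkar's treatment of the usual pair case: the ampleness of $A|_F$ bounds $F$, and the DCC of coefficients together with $\vol(A|_F)<u$ allows only finitely many possibilities up to the relevant invariants. Here I would invoke the boundedness results for generalised pairs with an ample divisor of bounded volume from \cite{BVGP}. The nef part $M'$ restricts to the fibre with $qM'|_{F'}$ nef Cartier. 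Since $K_F+B_F+M_F\equiv 0$, the restriction $M_F$ is numerically determined up to torsion, which should be controllable.

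\textbf{Step 2: the coefficients of $B_Z$.} For each prime divisor $D$ on $Z$, the coefficient of $D$ in $B_Z$ is $1-\mathrm{lct}$ over the generic point of $D$. Using the boundedness of the general fibres from Step 1, one proves an ACC for the relevant generalised lc thresholds over codimension one points. Concretely, one cuts $Z$ by general hyperplanes down to a curve and applies the ACC for generalised lc thresholds (Birkar--Zhang). The bounded torsion index, namely the fact that $q$ works uniformly, enters here. This yields that the coefficients of $B_Z$ lie in a DCC set.

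\textbf{Step 3: the moduli part, and the expected obstacle.} This step, controlling $M_Z$, is where I expect the real difficulty. For usual pairs, b-semiampleness-type results show that $pM_{Z'}$ is nef Cartier for a $p$ depending only on bounded fibre data. For generalised pairs the moduli part is only known to be b-nef, not b-semiample, and this is why the boundedness hypotheses and the explicit adjunction formula with a fixed $q$ are imposed. My plan is to use the given relation $q(K_X+B+M)\sim qf^*(K_Z+B_Z+M_Z)$ together with Step 2. Once the coefficients of $B_Z$ are controlled, the coefficients of the Cartier index of $M_{Z'}$ are controlled as well, up to a bounded multiple. One then needs that $M_{Z'}$ is nef and that its index divides a bounded integer. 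Nefness comes from the b-nefness of the moduli part of generalised lc-trivial fibrations. The bounded index would follow from Step 1, since only finitely many fibre types occur, which gives a bounded torsion order for the variation.

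If the bounded-index claim fails in general, I would instead use a weaker conclusion sufficient for the argument. Namely, $M_{Z'}$ is a nef $\bQ$-divisor whose coefficients in a fixed linear system lie in a DCC set. The DCC of volumes in \cite{BVGP} then still applies after writing $M_{Z'}$ as a combination of nef Cartier divisors with DCC coefficients. Finally, the strict inequality $<u$ (rather than $=u$) is harmless, because Step 1 reduces it to finitely many values of the fibre volume.
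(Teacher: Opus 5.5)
Your outer reduction matches the paper's. You have $\Ivol(K_X+B+M)=\vol(K_Z+B_Z+M_Z)$, and the coefficients of $B_Z$ lie in a DCC set by the ACC for generalised lc thresholds \cite[Theorem 1.5]{effectiveIitaka}; the fixed $q$ plays no role there, contrary to your Step 2. Then \cite[Theorem 1.3]{BVGP} finishes once $pM_{Z'}$ is nef Cartier for a bounded $p$. That last point is the entire content of the theorem, and your Step 3 does not establish it.

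At a point $z$ of a curve base with $f^*z=\sum m_iF_i$, the hypothesis $q(K_X+B+M)\sim qf^*(K_Z+B_Z+M_Z)$ only constrains the numbers $qm_i\,\mu_z(B_Z+M_Z)$. To get a bounded denominator for $\mu_zM_Z$ from this, you must control two things:
\begin{itemize}
\item the denominator of $\mu_zB_Z$, which is only known to lie in a DCC set;
\item the multiplicities $m_i$ of the special fibre, which nothing in your argument bounds.
\end{itemize}
Boundedness of the \emph{general} fibre says nothing about multiple fibres over codimension-one points, so ``bounded torsion order for the variation'' is not an argument. The torsion of $M_F$ is also not controllable in general (see the elliptic-curve example in the last section); it is handled only by the hypothesis on $q$.

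The fallback fails too. If the index $n$ of $M_{Z'}$ is unbounded, then $M_{Z'}=\frac1n(nM_{Z'})$ has coefficient $\frac1n$, and $\{\frac1n\}$ is not DCC, so \cite[Theorem 1.3]{BVGP} cannot be applied. Your reduction of $<u$ to finitely many fibre volumes is likewise unproved in the lc setting. It is also unnecessary, since the paper's argument works directly with $\vol(A|_F)\le u$.

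The missing idea is Lemma \ref{curve case} and its use in Theorem \ref{g-pair effective canonical bundle formula}. The argument has three stages.
\begin{enumerate}
\item Reduce to $\dim Z=1$. Run an MMP over $X$ from a log resolution with only horizontal boundary. Then take a good minimal model over a resolution $\bar Z$ via \cite[Theorem 1.3]{LX23}, which preserves the moduli part. Finally cut by general hyperplanes.
\item On a curve, make $\mu_zB_Z=1$. Run MMPs on a log resolution whose boundary includes the reduced fibres over $Z\setminus U$. This yields a $\bQ$-factorial g-dlt model with $K_X+B+M\sim_{\bQ}0/Z$, $A$ semi-ample and big over $Z$, and $\rounddown{B}\supseteq f^{-1}(Z\setminus U)$. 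Hence $\mu_zB_Z=1$, and the fixed $q$ gives $qm_i\,\mu_zM_Z\in\bZ$ for every $i$.
\item Bound one multiplicity. Take a component $F_1$ of $f^*z$ not contracted by the map defined by $A$.
\begin{itemize}
\item Adjunction gives a g-lc Calabi--Yau pair on $F_1$.
\item Theorem \ref{g-pair version of 6.4} gives a uniform $\lambda$ with $(F_1,B_{F_1}+\lambda A|_{F_1}+M_{F_1})$ g-lc.
\item DCC of volumes then gives $\vol((K_X+B+\lambda A+M)|_{F_1})\ge c>0$.
\end{itemize}
Comparing with $\sum_i m_i\vol((K_X+B+\lambda A+M)|_{F_i})\le\lambda^{d-1}u$ gives $m_1\le\lambda^{d-1}u/c$. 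So $p=q\lceil\lambda^{d-1}u/c\rceil!$ works.
\end{enumerate}
Normalising $\mu_zB_Z=1$ and then bounding the multiplicity of one non-contracted fibre component is exactly what your Step 3 needs and does not supply.
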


An important corollary is the following result regarding Fano type g-klt-trivial fibrations.

\begin{cor}\label{MAIN COR1}
  Let $d,q\in \bN$ and $\Phi \subset \bQ^{\geq0}$ be a DCC set. Then the set
  $$\{\mathrm{Ivol}(K_X+B+M)|(X,B+M)\in \mathcal{I}_{gklt}^{ft}(d,q, \Phi)\}$$
  satisfies the DCC property.
\end{cor}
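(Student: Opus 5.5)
We reduce Corollary \ref{MAIN COR1} to Theorem \ref{MAIN THM2}. Let $(X,B+M)\in \mathcal{I}_{gklt}^{ft}(d,q,\Phi)$ with contraction $f\colon X\to Z$, $K_X+B+M\sim_{\bQ}0/Z$ and $X$ of Fano type over $Z$. We need to check three things, uniformly in a bounded way. First, the generalised adjunction formula $q'(K_X+B+M)\sim q'f^*(K_Z+B_Z+M_Z)$ holds for some $q'$ depending only on $d,q,\Phi$. Second, there is an auxiliary divisor $A\ge 0$ with coefficients in a fixed DCC set $\Phi'\supseteq\Phi$, relatively ample over an open subset of $Z$, with $(X,B+tA+M)$ generalised lc there. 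Third, $\vol(A|_F)<u$ for a fixed $u$. Theorem \ref{MAIN THM2}, applied with $q'$, $\Phi'$ and $u$ (all depending only on $d,q,\Phi$), then gives the DCC property.

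\textbf{General fibres.} Let $F$ be a general fibre. Then $(F,B_F+M_F)$ is g-klt with $K_F+B_F+M_F\sim_{\bQ}0$ and $F$ of Fano type. The coefficients of $B_F$ lie in $\Phi$, and $qM'_F$ is nef Cartier. By the DCC/ACC results for generalised pairs, the coefficients of $B_F$ and the multiples of the $M'_j$ then lie in a finite set: global ACC for generalised lc-trivial pairs. Consequently the pairs $(F,B_F+M_F)$ lie in a bounded family of Fano type varieties with finitely many coefficient types.

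\textbf{Adjunction index.} Using boundedness of complements for Fano type generalised pairs (Birkar's $n$-complements with hyperstandard/finite coefficients), we obtain a bounded $n$ with $n(K_F+B_F+M_F)\sim 0$. Combined with the canonical bundle formula for generalised pairs, this should give the required $q'$ with $q'(K_X+B+M)\sim q'f^*(K_Z+B_Z+M_Z)$, where $M_Z$ is the moduli b-divisor and $q'M_Z$ is b-nef Cartier. This is the step I expect to be the main obstacle. Boundedness of complements gives the linear equivalence only on the general fibre, and extending it to a global relation requires working over $Z$ with the relative complement. It also requires controlling the Cartier index of the moduli part, which is the generalised analogue of the effective adjunction issue. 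My first attempt would be to construct an $n$-complement of $K_X+B+M$ over a neighbourhood of the generic point of $Z$ via relative Fano type, and then argue that since $K_X+B+M$ is already $\bQ$-trivial over $Z$ with finite coefficients, the complement equals $B$ generically. This yields $n(K_X+B+M)\sim_Z 0$ over an open set, which is all the canonical bundle formula needs.

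\textbf{Auxiliary divisor.} Since $-K_X$ is big over $Z$ and $F$ belongs to a bounded family, we can pick a very ample divisor on the bounded family of fibres and spread it out. Concretely, we take $A$ to be a general member of $|{-mK_X}|$ relative over $Z$ (after a small $\bQ$-factorialization and relative anticanonical model) with bounded $m$, scaled by $1/m$, so that its coefficients lie in the finite set $\{1/m\}$. Then $A|_F$ is ample on a model, $\vol(A|_F)$ is bounded above by boundedness of $F$, and $(X,B+tA+M)$ is g-lc for small $t$ over the generic point by generality. If $A$ is only big rather than ample on $X$, we first pass to the relative ample model of $-K_X$ over $Z$. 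This is crepant for $K_X+B+M$ and leaves $\mathrm{Ivol}$ unchanged, after which the Theorem applies.
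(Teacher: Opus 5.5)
Your reduction has the same structure as the paper's. The paper proves Corollary \ref{MAIN COR1} through Lemma \ref{g-pair Fanotype eff adjunction}, whose Step 1 produces exactly your three ingredients: a bounded index $l$ with $l(K_X+B+M)\sim lf^*(K_Z+B_Z+M_Z)$, an auxiliary divisor $A$, and the bound $\vol(A|_F)=\vol(-K_F)\le v$. It then feeds these into Theorem \ref{g-pair effective canonical bundle formula} and \cite[Theorem 1.3]{BVGP}, which is just the proof of Theorem \ref{MAIN THM2}.

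Your local choices differ, and both variants are acceptable:
\begin{itemize}
\item \emph{The index.} You get it from bounded complements; the paper gets it from a bounded Cartier index on a bounded family together with torsion-freeness of $\mathrm{Pic}$ of Fano type varieties.
\item \emph{The divisor $A$.} You take $A\in\frac1m|-mK_X|$ and pass to the relative anticanonical model, so $A$ is ample and Theorem \ref{MAIN THM2} applies verbatim. The paper takes $A=B+\frac1l D$ with $D\in|lM|$. That divisor is only nef and big over $Z$, so the paper calls Theorem \ref{g-pair effective canonical bundle formula}, which needs only semi-ampleness, directly.
\end{itemize}

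The genuine gap is the boundedness of the general fibre, on which your uniform $u$ and $m$ depend. It does not follow from finiteness of the coefficients: klt Fano varieties such as $\bP(1,1,n)$ are unbounded. The missing ingredient is the uniform $\epsilon$-lc bound of Lemma \ref{g-pair version of 2.48}. A g-klt pair $(F,B_F+M_F)$ with $K_F+B_F+M_F\sim_{\bQ}0$ and DCC data is g-$\epsilon$-lc for a fixed $\epsilon$. You must also first run a $-K_X$-MMP over $Z$ (crepant for $K_X+B+M$, since it is $\sim_{\bQ}0/Z$) so that $-K_F$ is nef and big. Then $F$ is $\epsilon$-lc, and BAB \cite{BABII} bounds $F$ and $\vol(-K_F)$.

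Conversely, the step you flag as the main obstacle is not one; your complement idea already settles it.
\begin{itemize}
\item Birkar's boundedness of complements for generalised pairs \cite{BABI} applies, because global ACC puts the coefficients of $B_F$ in a finite set of rationals and $qM'$ is b-Cartier. It gives an $n$-complement with $B_F^+\ge B_F$.
\item This complement must equal $B_F$, since $B_F^+-B_F\ge 0$ is $\bQ$-linearly trivial on a projective variety. Hence $n(K_F+B_F+M_F)\sim 0$.
\item From this relation on a general fibre you get a rational function $\alpha$ with $n(K_X+B+M)+\Div(\alpha)$ vertical, hence a pullback from $Z$, exactly as in the proof of Lemma \ref{finite effective adjunction}. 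No relative complements are needed.
\end{itemize}
Also drop the requirement that $q'M_{Z'}$ be b-nef Cartier. That is the conclusion of Theorem \ref{g-pair effective canonical bundle formula}, established inside Theorem \ref{MAIN THM2}, not a hypothesis you must verify.
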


As the usual pair case, the above result about generalised pairs also holds for $\bR$-divisors.

\begin{cor}\label{MAIN COR2}
  Let $d\in \bN$, $\Phi \subset \bR^{\geq0}$ a DCC set, and  $\Phi' \subset \bR^{\geq0}$ a finite set. Then the set of invariant Iitaka volumes
  $$\{\mathrm{Ivol}_{\iota}(K_X+B+M)|(X,B+M)\in \mathcal{I}_{gklt}^{ft}(d, \Phi,\Phi')\}$$
  satisfies the DCC property. Here in $\mathcal{I}_{gklt}^{ft}(d, \Phi,\Phi')$, we replace the condition on $qM'$ being Cartier by the following:
  \begin{itemize}
      \item $M'=\sum\mu_jM_j'$ where $\mu_j\in \Phi'$ and $M_j'$ is Cartier nef for any $j$.
  \end{itemize}
\end{cor}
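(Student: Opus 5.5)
We will reduce Corollary \ref{MAIN COR2} to the rational statement, Corollary \ref{MAIN COR1}, by perturbing the real coefficients to rational ones. The perturbation must stay inside a fixed DCC set, keep the g-klt Fano type structure, and change the invariant Iitaka volume by a controlled amount.

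Suppose the conclusion fails. Then there is a sequence $(X_i,B_i+M_i)\in \mathcal{I}_{gklt}^{ft}(d,\Phi,\Phi')$ with fibrations $f_i\colon X_i\to Z_i$ such that $\mathrm{Ivol}_{\iota}(K_{X_i}+B_i+M_i)$ is strictly decreasing. Passing to a subsequence, we may assume:
\begin{itemize}
\item the coefficients of each $B_i$ can be written as $b_{i,k}$, increasing in $i$ with limits $\bar b_k$;
\item the number of components of $B_i$ is fixed, which holds after bounding it in the Fano type setting over a general fibre, or alternatively by handling only the finitely many horizontal components and pushing vertical ones into the base.
\end{itemize}
Since $\Phi'$ is finite, the coefficients $\mu_j$ of $M'$ are finitely many reals.

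We then apply the usual rational polytope argument, Shokurov type, in the form for generalised pairs as in \cite{BVGP} or Han--Liu. There are finitely many rational vectors $(\mu^{(l)})$ close to $(\mu_j)$, and positive real weights $r_l$ summing to $1$, such that each generalised pair $(X_i,B_i+\sum_j\mu^{(l)}_jM_{i,j})$ is still g-klt and $K+B+M$ is the corresponding convex combination.

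The key point is that $K_{X_i}+B_i+M_i\sim_{\bR}0/Z_i$ must hold for every rational vertex. For this we will use that $X_i$ is of Fano type over $Z_i$, so that numerical triviality over $Z_i$ is a linear condition on the coefficients. The condition holds at the real point, hence it holds on a rational affine subspace through that point, and we choose the vertices inside this subspace.

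Handling the coefficients of $B_i$ is harder, because $\Phi$ is infinite. We will first replace $b_{i,k}$ by the limit values, or use a fixed approximation lying in a finite set, while the weights stay DCC. The expected mechanism is an ACC for generalised lc thresholds: it shows that for $i\gg 0$, the pair with $b_{i,k}$ replaced by $\bar b_k$ remains g-klt, and by the Fano type property it remains trivial over $Z_i$ up to a perturbation.

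On the base $Z_i$ we have
$$K_{X_i}+B_i+M_i\sim_{\bR} f_i^*(K_{Z_i}+B_{Z_i}+M_{Z_i}),$$
and the invariant Iitaka volume is $\vol(K_{Z_i}+B_{Z_i}+M_{Z_i})$. Both the discriminant and moduli parts depend affinely on the rational coefficients, so the volume is a continuous, indeed convex-combination-compatible, function. Writing the real data as a convex combination of rational data with coefficients in a fixed finite set, we apply Corollary \ref{MAIN COR1} to each vertex family. The rational coefficients lie in a DCC set and $q$ is bounded by the denominators, so each family of vertex volumes is DCC. We then compare: the volume of the real divisor dominates a combination of the vertex volumes via superadditivity of volume of nef-part-type divisors on $Z_i$ after a birational model, and is close to it. This yields a contradiction with the strict decrease, in the style of the proof of Theorem \ref{MAIN THM1}.

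The main obstacle is the last comparison step. Volume is not linear, so it is not immediate that real volumes decreasing strictly forces some rational vertex sequence to decrease. We would first try to use log concavity and superadditivity, $\vol(\sum r_lD_l)^{1/n}\ge\sum r_l\vol(D_l)^{1/n}$ on $Z_i$, together with the fact that all $D_l$ converge to the same divisor as the perturbation shrinks, so the approximation is uniform in $i$. Uniformity is ensured by boundedness of the relevant Cartier indices on the base, which follows from the boundedness of the fibres in the Fano type setting.
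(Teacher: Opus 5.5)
There is a genuine gap, and it is exactly the step you flagged: you cannot reduce the \emph{volume} statement to the rational Corollary~\ref{MAIN COR1} this way.

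Knowing that each vertex family $\{\vol(K_{Z_i}+B_{l,Z_i}+M_{l,Z_i})\}_i$ is DCC says nothing about $\vol\bigl(\sum_l r_l(K_{Z_i}+B_{l,Z_i}+M_{l,Z_i})\bigr)$. That volume depends on mixed intersection numbers of the vertex divisors; already on a surface base, $\vol(rD_1+(1-r)D_2)$ involves $D_1\cdot D_2$. Superadditivity of $\vol^{1/n}$ only gives a lower bound, and a lower bound never contradicts a strictly decreasing sequence.

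The uniform-approximation fix does not work either. To apply Corollary~\ref{MAIN COR1} you need a fixed $q$. So the rational vertices must come from a fixed finite set and stay at a fixed distance from the real point, as in Lemma~\ref{Decomposition of g-div}. If instead you let the perturbation shrink, the denominators, and hence $q$, become unbounded.

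Two further problems. First, a vertex need not satisfy $\kappa=\dim Z_i$, since its base divisor need not be big, so Corollary~\ref{MAIN COR1} may not even apply to it. Second, replacing $b_{i,k}$ by the limits $\bar b_k$ changes the vertical part of $B_i$. That changes the discriminant $B_{Z_i}$ and so the very volume you are trying to control. An ACC for thresholds preserves g-klt-ness but gives no equality of volumes.

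The missing idea is that the uniform decomposition is needed only to control the moduli b-divisor, not the volume. The paper never passes to rational volumes. Lemma~\ref{g-pair Fanotype eff adjunction} combines the decomposition of Lemma~\ref{Decomposition of g-div} with an MMP over a resolution of the base; this MMP makes the coefficients finite while preserving the moduli part. It yields
$K_X+B+M\sim_{\bR}f^*(K_Z+B_Z+M_Z)$ with $M_{\ZZ}=\sum_i r_iM_{i,\ZZ}$, where $r_i$ lies in a fixed finite set and each $M_{i,\ZZ}$ is nef Cartier.

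Meanwhile the \emph{original} discriminant $B_Z$ is never modified, and its coefficients lie in a DCC set by \cite[Theorem 1.5]{effectiveIitaka}. Since $\kappa(K_X+B+M)=\dim Z$, we have $\mathrm{Ivol}_{\iota}(K_X+B+M)=\vol(K_Z+B_Z+M_Z)$. So $(Z,B_Z+M_Z)$ is a real g-pair with DCC data, and \cite[Theorem 1.3]{BVGP}, which already allows real DCC coefficients, gives the DCC directly. The mixed-term difficulty you ran into is handled inside that theorem, not by comparing vertex volumes.
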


On the other hand, if we restrict ourselves to generalised klt pairs $(X,B+M)$, we may also omit the assumption on $q(K_X+B+M)\sim qf^*(K_Z+B_Z+M_Z)$. In particular, we have the following result, where the proof is quite different from that of Theorem \ref{MAIN THM2}. We also remark that, unlike the setting in Theorem \ref{MAIN THM2}, the theorem below only require $A$ to be an integral divisor, so that $A$ may not be effective.

\begin{thm}\label{MAINTHMnew}
 Let $d,q$ be two positive integers, $u\in \bQ^{>0}$ a positive rational number, and $\Phi\subset \bQ^{\geq0}$ a DCC set. Consider the set of $(X,B+M),A$ such that
 \begin{itemize}    
 \item $(X,B+M)$ is a projective generalised klt pair of dimension $d$ with data $\pi: \XX \rightarrow X$ and nef part $\MM$,    
 \item $B\in \Phi$ and $q\MM$ is nef Cartier,    
 \item there is a contraction $f:X\to Z$ such that $K_X+B+M\sim_\bQ 0/Z$, 
 \item $\kappa(K_X+B+M)=\dim Z$, and
 \item there is an integral divisor $A$ on $X$ such that $0<\vol (A|_F)\leq u$, where $F$ is the general fiber of $f:X\rightarrow Z$.
 \end{itemize}  

 Then the set of Iitaka volumes $\Ivol(K_X+B+M)$ of such $(X,B+M),A$ belongs to a fixed DCC set depending only on $d,q,u,\Phi$.
\end{thm}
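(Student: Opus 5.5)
We prove Theorem \ref{MAINTHMnew} by passing to the base $Z$ via the canonical bundle formula. Then we use the integral divisor $A$ only to bound the general fibre, and hence to control the moduli part on $Z$. The goal is to write
$$K_X+B+M\sim_\bQ f^*(K_Z+B_Z+M_Z),$$
where $(Z,B_Z+M_Z)$ is a generalised klt pair whose coefficients lie in a DCC set and whose nef part $M_{Z'}$ satisfies $p M_{Z'}$ nef Cartier on a high model $Z'$, with $p$ depending only on $d,q,u,\Phi$. Then $\Ivol(K_X+B+M)=\vol(K_Z+B_Z+M_Z)$, since $K_Z+B_Z+M_Z$ is big on $Z$ because $\kappa(K_X+B+M)=\dim Z$. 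Birkar's DCC for volumes of big generalised lc pairs \cite{BVGP}, with DCC coefficients and nef part of bounded Cartier index, then finishes the proof.

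\textbf{Step 1: boundedness of the general fibres.} The general fibre $F$ gives a generalised klt pair $(F,B_F+M_F)$ with $K_F+B_F+M_F\sim_\bQ 0$, $B_F\in\Phi$, $qM'_F$ nef Cartier, and an integral divisor $A|_F$ with $0<\vol(A|_F)\le u$. Since $A$ need not be effective or ample, we first replace it. By running an appropriate MMP, or by taking a klt-trivial model and using effective birationality for generalised pairs, we aim to produce a bounded family of log Calabi--Yau generalised pairs birational to $(F,B_F+M_F)$. Because generalised klt log Calabi--Yau pairs with DCC coefficients have coefficients in a finite set (ACC for generalised lc thresholds / global ACC), $B_F$ takes only finitely many coefficient values. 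Boundedness plus finite coefficients gives a uniform $n$ with $n(K_F+B_F+M_F)\sim 0$, via boundedness of complements or index results for bounded families.

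\textbf{Step 2: the canonical bundle formula with bounded index.} With $n$ fixed, we apply Filipazzi's generalised canonical bundle formula (or Birkar's version for bounded fibres). This gives a b-nef moduli part $M_{Z'}$ such that $pM_{Z'}$ is nef Cartier on a high resolution $Z'$, with $p$ depending only on $n,d,q$. This step replaces the adjunction assumption $q(K_X+B+M)\sim qf^*(\cdots)$ of Theorem \ref{MAIN THM2}. The coefficients of $B_Z$ are $1-\mathrm{glct}$ over codimension-one points. By the ACC for generalised lc thresholds, these coefficients lie in a DCC set depending only on $\Phi$, $q$ and $n$. The pair $(Z,B_Z+M_Z)$ is generalised klt because $(X,B+M)$ is.

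\textbf{Main obstacle.} The difficult part is Step 1: extracting a bounded fibre from a merely integral, possibly non-effective $A$ in the generalised klt setting, and obtaining a uniform Cartier index for $K_F+B_F+M_F$. My first approach is to take a $\bQ$-factorial klt model of $F$ and use $\vol(A|_F)\le u$ together with Birkar's boundedness of generalised log Calabi--Yau pairs having bounded polarisation, applied to a suitable $A$-positive modification. If that fails, I would reduce to the Fano type case of Corollary \ref{MAIN COR1} by perturbation, using the klt condition.
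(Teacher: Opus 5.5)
\textbf{Gap in Step 1.} Step 1 is not merely technically hard: its conclusion is false, so the proof cannot go through. Take $X=Z\times C$ with $Z$ a curve of genus at least $2$ and $C$ a fixed elliptic curve. Let $g\colon X\to C$ be the projection, $B=0$, and $\MM=M=g^*R$ with $R$ a torsion divisor of order $m$ on $C$. Let $A=Z\times\{o\}$ be a section.

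All hypotheses of the theorem hold with $q=u=1$:
\begin{itemize}
\item $(X,M)$ is generalised klt;
\item $K_X+M=f^*K_Z+g^*R\sim_\bQ 0/Z$;
\item $\kappa(K_X+M)=1=\dim Z$;
\item $\vol(A|_F)=1$.
\end{itemize}
Every fibre is the fixed curve $C$ and the coefficients are trivially finite. Yet $K_F+B_F+M_F=R$ has order $m$, which is unbounded. So ``boundedness plus finite coefficients gives a uniform $n$ with $n(K_F+B_F+M_F)\sim 0$'' fails for generalised pairs. Index and complement results either bound the torsion of $K_F+B_F$ or assume Fano type, where $\mathrm{Pic}$ is torsion-free. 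None of them bound the torsion class of a numerically trivial nef part.

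This is exactly the example in the last section of the paper. It is also why Theorem \ref{MAIN THM2} has to assume $q(K_X+B+M)\sim qf^*(K_Z+B_Z+M_Z)$. Your Steps 1--2 amount to deriving that hypothesis, which is impossible. In the example $M_Z=0$, so the desired control of the moduli part still holds; it just cannot be reached through a bounded torsion index. Separately, Filipazzi's formula gives only b-nefness of the moduli part. A bounded Cartier index would still need the polarisation and a fibre-multiplicity argument, as in Lemma \ref{curve case}.

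\textbf{How the paper proceeds.} The paper's argument (Proposition \ref{effectiveadjunctionwhensingularitiesarecontrolled}) never asks for linear triviality on fibres.
\begin{itemize}
\item Using the g-klt hypothesis, it first reduces to $(X,B+M)$ generalised $\epsilon$-lc over a curve $Z$.
\item If $B+M\equiv 0$ over $\eta_Z$ (the case of the example above), it shows $M\sim_\bQ 0/Z$ and $K_X+B\sim_\bQ 0/Z$. It then applies \cite[Lemma 7.4]{BVGP} to the usual pair $(X,B)$ and picks an integral representative for the contribution of $M$ by hand.
\item Otherwise $K_X$ is not pseudo-effective over $Z$. After making $A$ effective and nef and big over $Z$, it takes the pseudo-effective threshold $t$ of $K_X+tA$ over $Z$; this takes finitely many values thanks to $\epsilon$-lc. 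Running the $(K_X+tA)$-MMP gives a non-birational contraction $X''\to Y$ of Fano type.
\item On Fano type fibres $\mathrm{Pic}$ is torsion-free, so Lemma \ref{g-pair Fanotype eff adjunction} gives an effective adjunction over $Y$. The divisor $p(K_Y+C_Y+N_Y)$ then serves as a new integral polarisation with bounded fibre volume, and one inducts on dimension.
\end{itemize}

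Your fallback of ``reducing to the Fano type case by perturbation'' does not replace this. When $K_F\equiv 0$ there is no Fano type structure to perturb to. In the other case, the Fano type fibration has to be produced by the MMP and iterated through a tower.
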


The Iitaka volume of anti-log canonical divisors $-(K_X+B)$ behaves very differently, and many counter examples of generalised pairs are constructed by setting $-(K_X+B)$ to be semi-ample, $M\sim_{\bQ}-2(K_X+B)$ so that $K_X+B+M\sim_{\bQ}-(K_X+B)$ and the DCC property of Iitaka volume fails. On the other hand, under some additional assumptions on the singularities, the Iitaka volume of $-(K_X+B)$ behaves well enough.

\begin{cor}\label{MAIN COR3}
  Let $d,l$ be two positive integers, $\epsilon$, $v$ two positive rational numbers, and $\Phi\subset \bQ^{\geq0}$ is a finite set.

  Consider projective pairs $(X,B)$ such that 
  \begin{itemize}
      \item $(X,B)$ is $\epsilon$-lc of dimension $d$ with $B\in \Phi$,
      \item $-(K_X+B)$ is a semi-ample $\bQ$-divisor that defines a contraction $f:X\rightarrow Z$,
      \item $l(K_X+B)$ is Cartier, and
      \item there is a $\bQ$-Cartier $\bZ$-divisor $A$ on $X$ with $\mathrm{vol}(A|_F)=v$ for the general fibers $F$ of $f$.
  \end{itemize}

  Then there exists a finite set $J$ depending only on $d,l, \epsilon, v$, and $\Phi$ such that the Iitaka volume of $-(K_X+B)$ belongs to $J$. 
\end{cor}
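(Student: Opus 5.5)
Since $-(K_X+B)$ is semi-ample and defines $f\colon X\to Z$, we have $-(K_X+B)\sim_{\bQ} f^*H$ for an ample $\bQ$-divisor $H$ on $Z$, and $K_X+B\sim_{\bQ}0/Z$. The plan is to rewrite the situation as a generalised klt pair and apply Theorem \ref{MAINTHMnew}, which gives a DCC set. Then, using finiteness of $\Phi$ and the fixed index $l$, we show the values are discrete and bounded above. DCC plus these two properties yields finiteness.

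\textbf{Construction.} Because $l(K_X+B)$ is Cartier and $-l(K_X+B)$ is semi-ample, some fixed multiple of it is base point free, but that multiple is not bounded a priori. So instead we set $M'=-2(K_X+B)$ with trivial data $X'=X$. This is nef, and $lM'$ is Cartier, so we may take $q=2l$. Then $(X,B+M)$ is generalised $\epsilon$-lc, in particular generalised klt, and
\[
K_X+B+M\sim_{\bQ}-(K_X+B),
\]
which is $\sim_{\bQ}0/Z$ with $\kappa=\dim Z$. The divisor $A$ satisfies $\vol(A|_F)=v$. By Theorem \ref{MAINTHMnew} with $u=v$, the values $\Ivol(-(K_X+B))$ lie in a DCC set depending only on $d,l,v,\Phi$.

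\textbf{Upper bound.} Here we use the $\epsilon$-lc hypothesis. Since $-(K_X+B)$ is nef, $X$ is of Fano type over $Z$, and $(X,B)$ is $\epsilon$-lc, boundedness of complements and the canonical bundle formula give the following. The base $(Z,B_Z+M_Z)$ is a generalised pair with $-(K_Z+B_Z+M_Z)\sim_{\bQ}H$ ample. It has bounded singularities, using the results of Birkar and of Chen--Han on singularities of bases of Fano type fibrations, with $\delta$-lc for some $\delta=\delta(d,\epsilon)$. So $Z$ is a $\delta$-lc Fano type variety with $H$ an anti-log-canonical class, and BAB together with the $\epsilon$-lc hypothesis then bounds $\vol(H)=\Ivol(-(K_X+B))$ from above.

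\textbf{Discreteness.} Since $l(K_X+B)$ is Cartier, there is a uniform $p$ such that $pH$ is Cartier on $Z$. For $\dim Z$ this bound comes from effective adjunction and Cartier index considerations. A cleaner route is to use the bounded family of $Z$ obtained in the previous step: on a bounded family, the Cartier index of $l'H$ is bounded once $lf^*H$ is Cartier. With $pH$ Cartier, $\vol(H)=H^{\dim Z}\in \frac{1}{p^{\dim Z}}\bZ$, a discrete set. Hence the values lie in a DCC set that is discrete and bounded above, so they form a finite set $J$.

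The main obstacle is the upper bound and boundedness step, namely getting $\delta$-lc singularities on the base. I would try first to invoke the known result that the bases of $\epsilon$-lc Fano type fibrations are $\delta$-lc (the Shokurov conjecture, proved by Birkar for Fano type fibrations), and then apply BAB.
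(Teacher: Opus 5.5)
\textbf{Verdict: genuine gap.} Your DCC half is correct, but your replacement for the ACC half has a genuine gap.

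\textbf{The DCC half.} This is exactly the paper's device. With trivial data, $M=-2(K_X+B)$ is a nef part with $lM$ Cartier, and $K_X+B+M=-(K_X+B)$. The paper feeds this into Theorem \ref{MAIN THM2}. Your use of Theorem \ref{MAINTHMnew} fits at least as well, since $(X,B+M)$ is generalised klt and $A$ is only a $\bQ$-Cartier $\bZ$-divisor.

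\textbf{The ACC half.} The paper gets finiteness by quoting \cite[Theorem 1.8]{CHL24}, which puts these Iitaka volumes in an ACC set; ACC plus DCC gives a finite set. Your substitute (upper bound plus discreteness) never uses $A$, and both of its key claims are false without it.
\begin{itemize}
\item $X$ is not of Fano type over $Z$. Since $K_X+B\sim_{\bQ}0/Z$, the general fibre is log Calabi--Yau (e.g.\ an elliptic curve). So the Fano type base-singularity results do not apply, and there is no $\delta=\delta(d,\epsilon)$ with $(Z,B_Z+M_Z)$ generalised $\delta$-lc.
\item The Cartier index of $H$ is not controlled by $l(K_X+B)$ being Cartier plus boundedness of $Z$, because multiple fibres create denominators.
\end{itemize}

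\textbf{A counterexample to both claims.} Take the rational elliptic surfaces of Remark \ref{ellpitic surfaces}. Let $X\to Z=\bP^1$ have a fibre $f^*z_0=mF_0$, with $B=0$, $l=1$, $\epsilon=1$.
\begin{itemize}
\item $-K_X\sim_{\bQ}F_0=f^*(\frac1m z_0)$. So $f^*H$ is Cartier and $Z$ is fixed, yet $\deg H=\frac1m$.
\item The discriminant has coefficient $1-\frac1m$ at $z_0$, so the base has log discrepancy $\frac1m\to 0$.
\item $\Ivol(-K_X)=\frac1m$ takes infinitely many values.
\end{itemize}
Every hypothesis of the corollary holds uniformly here except the one on $A$, since any divisor meets the general fibre in degree $m(A\cdot F_0)$. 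Hence no argument for discreteness that ignores $A$ can work.

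Note also that if your last two steps were valid, they would already give finiteness, since a bounded subset of $\frac1N\bZ$ is finite. The DCC step would then be superfluous; the real content lies in the part that is missing.

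\textbf{How to repair your route.} You must use $A$ in two places.
\begin{itemize}
\item Bound the torsion index of $l(K_F+B_F)$ on the general fibre, via boundedness of the polarised fibres. Also bound the multiplicity of some component of each fibre over codimension-one points of $Z$, as in \cite[Lemmas 7.2--7.4]{BVGP}. Together these make $lH$ $\bQ$-linearly equivalent to a Weil divisor with bounded denominators.
\item Obtain $\delta$-lc singularities on the base, with $\delta$ depending on $d,\epsilon,v$. This should come from the version of the base-singularity theorem for lc-trivial fibrations whose fibres carry a divisor of bounded volume (cf.\ \cite{singularitiesonFanofibrations}).
\end{itemize}
Only then do BAB and bounded Cartier indices on the resulting bounded family of bases give the upper bound and discreteness. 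Alternatively, simply quote \cite[Theorem 1.8]{CHL24} as the paper does.
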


We also provide two boundedness results about g-klt fibration with fixed Iitaka volume $\mathrm{Ivol}(K_X+B+M)$. The proof is similar to that of Theorem \ref{MAIN THM2}.

\begin{thm}\label{MAINTHM3} 
Let $d,q\in \bN$, $u,v\in \bQ^{>0}$, and $\Phi\subset \bQ^{\geq 0}$ be a DCC set. Consider the set of $(X,B+M),A$ such that
\begin{itemize}
    \item $(X,B+M)$ is a projective generalised klt pair of dimension $d$ with data $\pi: \XX \rightarrow X$ and nef part $\MM$,
    \item $B\in \Phi$ and $q\MM$ is nef Cartier,
    \item $K_X+B+M$ is semi-ample defining a contraction $f:X\rightarrow Z$,
    \item $\mathrm{Ivol}(K_X+B+M)=v$,
    \item we have an adjunction formula
    $$ q(K_X+B+M)\sim qf^*(K_Z+B_Z+M_Z)$$, and
    \item there is an effective $\bQ$-divisor $A\geq 0$ on $X$ that is ample such that $A\in \Phi$ and $\vol (A|_F)\leq u$, where $F$ is the general fiber of $f:X\rightarrow Z$.
\end{itemize}

Then there is a bounded family of g-klt pairs $\cP$, a positive integer $l\in \bN$, and a positive number $\epsilon \in \bQ^{>0}$ depending only on $d,u,v,q,\Phi$ such that $(Z,B_Z+M_Z)$ belongs to $\cP$, $l(K_Z+B_Z+M_Z)$ is very ample, and $(X, B+M)$ is generalised $\epsilon$-lc.
\end{thm}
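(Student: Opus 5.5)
Our plan follows the strategy of Birkar's boundedness results for lc-trivial fibrations in \cite{BVGP}, adapted to generalised pairs. The statement has three parts: boundedness of $(Z,B_Z+M_Z)$, effective very ampleness of $l(K_Z+B_Z+M_Z)$, and a uniform lower bound on the generalised log discrepancies of $(X,B+M)$.

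\textbf{Step 1: restrict to a general fibre.} Let $F$ be a general fibre of $f$, and set $K_F+B_F+M_F=(K_X+B+M)|_F\sim_\bQ 0$. Then $(F,B_F+M_F)$ is generalised klt, $B_F\in\Phi$, and $q\MM|_{F'}$ is nef Cartier. Moreover $A|_F$ is ample with $A|_F\in\Phi$ and $\vol(A|_F)\le u$.

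Since $\Phi$ is DCC, the coefficients of $A|_F$ are bounded below by a positive constant. Together with the volume bound, the ACC for generalised lc thresholds and the boundedness results for generalised lc-trivial pairs with bounded polarisation imply that such $(F,\Supp B_F)$ form a log bounded family. By the ACC for numerically trivial generalised pairs, the coefficients of $B_F$ then lie in a finite set.

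\textbf{Step 2: control the base.} By the generalised canonical bundle formula, the moduli part $M_Z$ is b-nef with bounded Cartier index on a high model. Here we use the given adjunction $q(K_X+B+M)\sim qf^*(K_Z+B_Z+M_Z)$. Step 1 bounds the denominators in the discriminant: by the ACC for generalised lct over the generic point of each prime divisor on $Z$, the coefficients of $B_Z$ lie in a DCC set depending only on $d,q,u,\Phi$. We also need $(Z,B_Z+M_Z)$ to be generalised klt with nef part having bounded Cartier index. Since $K_X+B+M$ is semi-ample and its Iitaka volume is $v$, the divisor $K_Z+B_Z+M_Z$ is ample with $\vol(K_Z+B_Z+M_Z)=v$. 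We then apply Birkar's boundedness for generalised lc pairs of general type with DCC coefficients and fixed volume (\cite{BVGP}). This gives the bounded family $\cP$ and an $l$ with $l(K_Z+B_Z+M_Z)$ very ample. We use effective birationality for the DCC-coefficient generalised setting, and the adjunction to make $l$ integral-compatible with $q$.

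\textbf{Step 3: uniform $\epsilon$-lc.} Since $Z$ belongs to a bounded family, we can find a bounded-degree very ample divisor $H$ on $Z$. We will show that $(Z,B_Z+M_Z)$ is generalised $\delta$-lc for a uniform $\delta$. The coefficients of $B_Z$ lie in a DCC set and are $<1$, and the family is bounded, so upper semicontinuity in bounded families and a DCC/ACC-of-mld argument give $\delta$. Discrepancies of $(X,B+M)$ over $Z$ are then controlled by comparing with the fibrewise $\epsilon_F$-lc bound from Step 1 and with $\delta$. Concretely, for a divisor $E$ over $X$ with small log discrepancy, either $E$ is horizontal over $Z$, and then it is controlled on $F$, or its image is vertical. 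In the vertical case, the lc threshold of $f^*$ of a bounded divisor through the centre relates the log discrepancy of $E$ to that of its image over $Z$. We will use \cite{BVGP}'s argument bounding singularities of lc-trivial fibrations with bounded general fibres and bounded base.

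\textbf{Main obstacle.} We expect Step 3, and the DCC claim for $B_Z$ in Step 2, to be the main obstacle. The generalised moduli part complicates the inversion of adjunction, and the boundedness of $M_Z$'s Cartier index is what the hypothesis $q(K_X+B+M)\sim qf^*(K_Z+B_Z+M_Z)$ is meant to supply. We would first try to reduce to the usual-pair case by perturbing: replace $M$ by a general member of $|q\MM+$ ample$|$ on a bounded model, and then invoke the corresponding results for pairs.
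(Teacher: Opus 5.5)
Your outline has the same architecture as the paper's proof. You control the moduli part so that $(Z,B_Z+M_Z)\in\cF_{gklt}(\dim Z,\Psi,v)$, take boundedness, finite coefficients and a uniform $\delta$-lc bound for the base from \cite{BVGP}, and then obtain $\epsilon$-lc of $(X,B+M)$ by splitting divisors into horizontal ones (fibrewise bound, Lemma \ref{g-pair version of 2.48}) and vertical ones (definition of the discriminant), exactly as in Proposition \ref{control singularities for g-pair}. But the step on which all of this rests is only asserted, and the justification you give for it is wrong.

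The generalised canonical bundle formula makes $M_{Z'}$ b-nef but says nothing about its Cartier index. The hypothesis $q(K_X+B+M)\sim qf^*(K_Z+B_Z+M_Z)$ does not bound it either, even at fixed Iitaka volume. For instance:
\begin{itemize}
\item let $S_m\to\bP^1$ be a Halphen surface of index $m$ (one of the rational elliptic surfaces in Remark \ref{ellpitic surfaces}), so $-K_{S_m}\sim F_0$ with $mF_0$ a fibre;
\item let $C$ be a fixed curve of genus $2$, $X=S_m\times C$, $B=0$, and $M=-2p_1^*K_{S_m}+p_2^*N$ with $N$ effective of degree $m-2$.
\end{itemize}
Then $(X,B+M)$ is g-klt with $M$ nef Cartier, and $K_X+M\sim p_1^*F_0+p_2^*(K_C+N)$ is semi-ample and defines $f\colon X\to Z=\bP^1\times C$. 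The adjunction hypothesis holds with $q=1$, because $p_1^*F_0$ is the pullback of $\frac1m(\{z_0\}\times C)$, and $\Ivol(K_X+M)=2$. Yet $K_Z+B_Z+M_Z\equiv\frac1m(\{\mathrm{pt}\}\times C)+m(\bP^1\times\{\mathrm{pt}\})$, so $l(K_Z+B_Z+M_Z)$ can be Cartier only if $m\mid l$. The only hypothesis that fails is the one on $A$: every ample effective $A\in\Phi$ has $\vol(A|_F)\ge m\min(\Phi\setminus\{0\})$. In your plan $A$ is used only to bound the general fibre, never to control $M_Z$. So your argument cannot tell this family apart from the admissible ones.

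The missing ingredient is Theorem \ref{g-pair effective canonical bundle formula}: $pM_{Z'}$ is Cartier nef for some $p$ depending only on $d,q,\Phi,u$. Its proof goes as follows.
\begin{itemize}
\item It reduces to a curve base (Lemma \ref{curve case}).
\item It runs MMPs so that $(X,B+tA+M)$ is g-dlt, $A$ is big and semi-ample over $Z$, and the special fibres lie in $\rounddown{B}$. Then $\mu_zB_Z=1$ and $qm_i\mu_zM_Z\in\bZ$ for the multiplicities in $f^*z=\sum m_iF_i$.
\item It bounds one $m_1$ by comparing volumes. The uniform g-lc threshold of Theorem \ref{g-pair version of 6.4} and DCC of volumes \cite[Theorem 1.3]{BVGP} give $\vol((K_X+B+\lambda A+M)|_{F_1})\ge c$ on a component $F_1$ not contracted by $A$, while the total is at most $\lambda^{d-1}u$.
\end{itemize}
This is where $\vol(A|_F)\le u$ really enters.

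With this theorem in hand, two of your other steps should be replaced by what the paper does. The uniform $\delta$ for the base is \cite[Theorem 1.5]{BVGP} applied to $\cF_{gklt}(\dim Z,\Psi,v)$, not a semicontinuity argument; fixed volume also forces the coefficients of $B_Z$ into a finite set. Very ampleness does not follow from effective birationality. It comes from integrality of $l(K_Z+B_Z+M_Z)$ (finite coefficients plus the bounded index of $M_{Z'}$), boundedness of $\cP$, and \cite[Lemma 2.15]{Zhu25}.

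Your fallback of replacing $M$ by a member of $|qM'+H|$ with $H$ ample cannot substitute for this step. It changes the class of $K_X+B+M$, so it destroys both $K_X+B+M\sim_\bQ 0/Z$ and the Iitaka volume.
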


\begin{cor}\label{MAINCOR4}
  Let $d,q\in \bN$, $v\in \bQ^{>0}$, and $\Phi\subset \bQ^{\geq 0}$ be a DCC set. Consider the set of $(X,B+M)$ such that
  \begin{itemize}    
  \item $(X,B+M)$ is a projective generalised klt pair of dimension $d$ with data $\pi: \XX \rightarrow X$ and nef part $\MM$,    
  \item $B\in \Phi$ and $q\MM$ is nef Cartier,   
  \item $K_X+B+M$ is semi-ample defining a contraction $f:X\rightarrow Z$,    
  \item $\mathrm{Ivol}(K_X+B+M)=v$, and   
  \item $X$ is of Fano type over $Z$.
  \end{itemize}
  
Then there is a bounded family of g-klt pairs $\cP$, a positive integer $l\in \bN$, and a positive number $\epsilon \in \bQ^{>0}$ depending only on $d,q,v,\Phi$ such that $(Z,B_Z+M_Z)$ belongs to $\cP$, $l(K_Z+B_Z+M_Z)$ is very ample, and $(X, B+M)$ is generalised $\epsilon$-lc. Moreover, the set of such $X$ is also bounded.
\end{cor}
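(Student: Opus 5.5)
The plan is to reduce Corollary \ref{MAINCOR4} to Theorem \ref{MAINTHM3}. To do that we must manufacture, uniformly in the family, the two pieces of data that Theorem \ref{MAINTHM3} requires and that the corollary does not assume: an adjunction formula with a fixed index $q'$, and an effective ample $\bQ$-divisor $A\in\Phi'$ (for a DCC set $\Phi'$) whose restriction to the general fibre has bounded volume. After that we prove boundedness of $X$ separately.

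\textbf{Step 1: the adjunction index.} Since $X$ is of Fano type over $Z$ and $K_X+B+M\sim_\bQ 0/Z$, the general fibre $F$ gives a generalised klt log Calabi--Yau pair $(F,B_F+M_F)$ with $F$ of Fano type. Its coefficients lie in $\Phi$, and $qM'_F$ is nef Cartier. By boundedness of complements for Fano type generalised pairs (Birkar's theory of complements, in the generalised form), there is $n$ depending only on $d,q,\Phi$ such that the relevant divisors admit $n$-complements over a neighbourhood of any point of $Z$. With the DCC hypothesis we first replace $\Phi$ by a finite set via ACC for glc thresholds, or simply use the hyperstandard approximation. Arguing as in the proof of Corollary \ref{MAIN COR1}, which already handles the Fano type case, we obtain $q'$ depending only on $d,q,\Phi$ with $q'(K_X+B+M)\sim q'f^*(K_Z+B_Z+M_Z)$, where $q'M'_Z$ is b-nef Cartier. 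Since the moduli part comes from the canonical bundle formula, this also bounds the nef part on $Z$.

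\textbf{Step 2: the auxiliary divisor $A$.} Because $X$ is of Fano type over $Z$, the fibres $F$ form a bounded family by BAB for generalised $\epsilon$-lc Fano type pairs. The $\epsilon$ here comes from the fact that the gklt log Calabi--Yau fibres with DCC coefficients have lc thresholds bounded away from zero (via ACC for mlds in bounded settings), or alternatively we can proceed without $\epsilon$ using Birkar's boundedness of lc Calabi--Yau fibrations of Fano type. We want $A$ on $X$ that is ample and effective with $\vol(A|_F)\le u$ for some $u=u(d,q,\Phi)$. A natural candidate is $A=-\frac{1}{m}K_X$ after a relative model change, but $-K_X$ is only big over $Z$ and not ample on $X$. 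Instead we take $A = \delta(-K_X + f^*H)$ up to an ample modification. The cleaner route is to note that the proof of Theorem \ref{MAINTHM3} only uses $A$ over the generic point of $Z$. So we pass to a model where $-K_X$ is ample over $Z$; this is possible by running an MMP, since $X$ is Fano type over $Z$. We take $A$ as a general member of $|{-m K_X}|$ over $Z$ plus a pullback, with coefficient $1/m$ rational in a finite set, so $A\in\Phi'$. Then $\vol(A|_F)=\vol(-K_F)$, which is bounded by boundedness of $F$. This step, getting a global ample effective $A$ with DCC coefficients and controlled fibre volume while keeping the generalised klt condition, is where I expect the main technical obstacle. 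The changes of model must preserve $K_X+B+M$ crepantly and must not change $\Ivol$, $Z$, or $(Z,B_Z+M_Z)$.

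\textbf{Step 3: conclusion.} Applying Theorem \ref{MAINTHM3} with $q'$, $\Phi\cup\Phi'$ and $u$ yields $\cP$, $l$ and $\epsilon$ as claimed. For the final statement we use that $Z$ is bounded with $l(K_Z+B_Z+M_Z)$ very ample of bounded degree (since $\Ivol=v$ is fixed), that $(X,B+M)$ is generalised $\epsilon$-lc, and that $X$ is Fano type over $Z$. We then invoke Birkar's boundedness of $\epsilon$-lc Fano type fibrations over a bounded base (the relative BAB, as in \cite{BVGP}) to conclude that the $X$ form a bounded family.
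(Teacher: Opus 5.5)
Your proposal is correct and is essentially the paper's argument. The paper obtains the effective adjunction from Lemma \ref{g-pair Fanotype eff adjunction}: the Picard group of the Fano type fibre is torsion free, and an auxiliary $A\sim_{\bQ}-K_X/Z$ of bounded fibre volume is fed into Theorem \ref{g-pair effective canonical bundle formula}. It then reruns the proof of Theorem \ref{MAINTHM3}, getting boundedness of $(Z,B_Z+M_Z)$ via \cite{BVGP} and $\epsilon$ via Proposition \ref{control singularities for g-pair}, and concludes boundedness of $X$ by \cite[Theorem 2.3]{FTF}.

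The only difference is that you call Theorem \ref{MAINTHM3} as a black box. This forces your extra (harmless, crepant) passage to the relative anticanonical model so that $A$ is globally ample, whereas the paper only needs $A$ semi-ample over $Z$. Also, the boundedness of Fano type fibrations you invoke at the end is \cite{FTF}, not \cite{BVGP}.
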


In the end of this paper, we provide several examples showing that some assumptions in \ref{DEF2} cannot be removed. These examples also reflect the complexity of g-lc trivial fibrations. On the other hand, we also give a short proof showing that at least in dimension two, Theorem \ref{MAIN THM2} can be improved.

~\\

\paragraph{\textbf{Sketch of proofs}} 
We start with Theorem \ref{MAIN THM1}. Let $(X,B),A\in \cI_{lc}(d,\Phi,<u)$. Let $f:X\to Z$ be the contraction defined by the semi-ample $\bR$-divisor $K_X+B$. Recall that in \cite[Lemma 7.4]{BVGP}, Birkar showed that in the $\bQ$-divisor case, there exists positive integers $p, q$ depending only on $d,\Phi$, and $u$ such that $q(K_X+B)\sim qf^*(K_Z+B_Z+M_Z)$ and $pM_{Z'}$ is nef Cartier. This helps us to control the coefficients of the moduli part effectively and thus the DCC of Iitaka volume holds by \cite[Theorem 1.3]{BVGP}. However in the $\bR$-divisor case such integers may not exist. To address this issue, we need to decompose the $\bR$-divisor log pairs $K_X+B$ into a $\bR$-linear sum of $\bQ$-divisors, and this decomposition must be uniform. That is, we may write $K_X+B = \sum r_i(K_X+B_i)$ so that $\sum r_i =1$, $K_X+B_i\sim_{\bQ}0/Z$ for all $i$, $(X,B_i)$ is lc for all $i$, and $r_i\in \Psi$ for a fixed DCC set $\Psi$. To do this, we first use the uniform decomposition of adjunction formula introduced in \cite[Theorem 3.3]{effectiveadjunctionformulawithrealcoefficients}, where the authors only studied the finite coefficient case. Then we use some standard MMP tools to reduce the DCC case to the finite case.  

We note that Zhu also uses similar method in \cite{Zhu25}, where the author is interested in different aspects.
~\\

Now consider the generalised pairs case, where the situation is much more difficult than the usual pair case. We still have a generalised version of the canonical bundle formula: $K_X+B+M\sim_{\bQ}f^*(K_Z+B_Z+M_Z)$ and the coefficients of $B_Z$ still belong to a fixed DCC set, but controlling the moduli part $M_Z$ is much more difficult. The main difficulty is that we cannot control the torsion index of $K_X+B+M$ along the general fiber $F$ of $f:X\rightarrow Z$, even if we fix $A$ with $\mathrm{vol}(A|_F)=u$. This is because that although the general fibers $F$ still belong to a bounded family, we don't have a uniform $q$ to ensure $q(K_F+B_F+M_F)\sim 0$. For example, if $F$ is a fixed elliptic curve, $K_F=B_F=0$, and $M_F$ is a torsion divisor on $F$ with arbitrarily large torsion index, then such $q$ certainly doesn't exist.

One option to solve this issue is to simply assume $f:X\rightarrow Z$ is of Fano type. This condition ensures that any Cartier divisor $D\sim_{\bQ} 0$ on $F$ is actually linearly equivalent to $0$. We note that in our proof of Corollary \ref{MAIN COR1}, the Fano type condition is only used here.
So it seems more natural to give a weaker assumption, and our main observation is that assuming $q(K_X+B+M)\sim qf^*(K_Z+B_Z+M_Z)$ for some fixed integer $q$ is enough.

To prove Theorem \ref{MAIN THM2} and Corollary \ref{MAIN COR1}, we first generalize several known boundedness type results of usual pairs to the g-pairs setting, like Theorem \ref{g-pair version of 6.4}, which is the g-pair version of \cite[Theorem 6.4]{GOPV}. Such generalizations may be useful elsewhere. The proof of Theorem \ref{g-pair version of 6.4} is a key part of the argument, where we use methods similar to \cite[Lemma 4.4]{BABI}, but we have to control the moduli part as well, which requires additional works. We then follow the ideas of \cite{BVGP} and use our new combined results for g-pairs to prove an effective adjunction formula (Theorem \ref{g-pair effective canonical bundle formula}) and use it to prove Theorem \ref{MAIN THM2}. The proof of Corollary \ref{MAIN COR2} is similar to Theorem \ref{MAIN THM1} and we omit it here.

Another option is to assume $(X,B+M)$ is generalised klt to begin with, and in this direction we have Theorem \ref{MAINTHMnew}. The basic idea is, although we still cannot find $p$ such that $p(K_X+B+M)\sim pf^*(K_Z+B_Z+M_Z)$, there is an alternative way to control $M_Z$. To do this, we use MMP theory to construct a new generalised pair $K_{X''}+tA''$ that is semi-ample but not big, and then obtain a non-birational contraction $X''\to Y/Z$. The main observation is that $X''\to Y$ is of Fano type, and we may use the effective adjunction formula for Fano type fibrations (Lemma \ref{g-pair Fanotype eff adjunction}) and induction to prove the result.

Corollary \ref{MAIN COR3} is a direct consequence of Corollary \ref{MAIN THM2} and \cite[Theorem 1.8]{CHL24}. Here we may take $M=-2(K_X+B)$ so that $K_X+B+M = -(K_X+B)$. The boundedness results Theorem \ref{MAINTHM3} and Corollary \ref{MAINCOR4} are direct application of Theorem \ref{g-pair effective canonical bundle formula}, \cite[Theorem 1.4]{BVGP} and \cite[Theorem 1.2]{FTF}.
 ~\\

\paragraph{\textbf{Acknowledgements}} The author expresses his gratitude to his advisor Meng Chen for great support and encouragement. He would like to thank Jingjun Han, Wenfei Liu and Minzhe Zhu for relevant discussions. He also thanks Sicheng Ding and Zhengjie Yu for helpful suggestions.

\section{Preliminaries}

\subsection{Divisors}
\begin{definition}[ACC sets and DCC sets]
	Let $\Phi\subset \bR^{\geq0}$. We say $\Phi$ satisfies the \textit{ascending chain condition} (ACC) if it does not contain an infinite strictly increasing sequence. We say $\Phi$ satisfies the \textit{descending chain condition} (DCC) if it does not contain an infinite strictly decreasing sequence.
\end{definition}

\begin{definition}[Contractions]
	We say a projective morphism $f:X\to Z$ between varieties is a \textit{contraction} if $f_*\cO_X=\cO_Z$. In particular, $f$ has connected fibers and if $X\rightarrow Y\rightarrow Z$ is the Stein factorization of $f$, then $Y\rightarrow Z$ is an isomorphism.
\end{definition}

\begin{definition}[Basic Notations]
    Let $X$ be a normal variety, and let $N$ be an $\bR$-divisor on $X$. We often denote the coefficient of a prime divisor $D$ in $N$ by $\mu_DN$. Let $\delta\in \bR$, $\Phi\subset \bR$, and write $N=\sum a_iN_i$ where $N_i$'s are different Weil divisors on $X$. We denote $N\in \Phi$ if $a_i\in \Phi$ for any $i$, and denote $N\geq \delta$ (resp. $D\leq \delta$) if $a_i\geq \delta$ (resp. $a_i\leq \delta)$ for any $i$.
    
	Let $f:X\to Z$ be a contraction between normal varieties. Let $D$ be an $\bR$-divisor on $X$. We say that $D$ is \textit{vertical} over $Z$ if $f(\Supp D)$ is a proper subset of $Z$. We say that $D$ is \textit{horizontal} over $Z$ if the induced map $\Supp D\to Z$ is dominant. Given an $\bR$-divisor $D$ on $X$, there is a unique decomposition $D=D^h+D^v$ such that 
	$D$ is decomposed into its \textit{horizontal part} $D^h$ and the \textit{vertical part} $D^v$ with respect to $f:X\to Z$.
    
    Let $f:X\rightarrow Z$ be a morphism to a normal variety, and let $R$ and $L$ be two $\bR$-Cartier divisors on $X$. We say $R\sim L$ over $Z$ (resp. $M\sim_{\bQ}L$ over $Z$)(resp. $M\sim_{\bR}L$ over $Z$) if there is a Cartier (resp. $\bQ$-Cartier) (resp. $\bR$-Cartier) divisor $N$ on $Z$ such that $R-L\sim f^*N$ (resp. $R-L\sim_{\bQ}f^*N$) (resp. $R-L\sim_{\bR}f^*N$).

    We say an $\bR$-divisor $M$ is \textit{b-Cartier} if it is $\bR$-Cartier and if there is a birational contraction $\phi:X'\rightarrow X$ such that $\phi^*M$ is Cartier.
\end{definition}

\begin{definition}[(Invariant) Iitaka dimension and (invariant) Iitaka volume]
	Let $D$ be an $\bR$-divisor on a projective normal variety $X$. The \textit{Iitaka-Kodaira dimension} $\kappa(D)$ is defined as

    $$\kappa(D)= \limsup_{m\rightarrow \infty}\frac{\mathrm{log}h^0(X,\cO_X(\rounddown{mD}))}{\mathrm{log}m}$$
    if $|mD|\neq \emptyset$ for some $m\in \bZ_{>0}$, and $\kappa(D)=-\infty$ if otherwise, and in case $\kappa =\kappa(D)\geq 0$, the \textit{Iitaka volume}
    $$\mathrm{Ivol}(D):= \limsup_{m\to \infty} \frac{h^0(X,\cO_X(\rounddown{mD}))}{m^\kappa/\kappa!}.$$ If $\kappa(D)=-\infty$, we let $\Ivol(D)=0$.
    
    We now define the \textit{invariant Iitaka dimension} $\kappa_\iota(X,D)$ as follows.  If $|D|_\bR \neq \emptyset$, let $\kappa_\iota(X,D)=\kappa(X,\DD)$ for some $\bR$-divisor $\DD\in |D|_\bR$. Here, the right hand side is the usual Iitaka dimension of $\DD$.  Note that in this case $\kappa_\iota(X,D)$ does not depend on the choice of $\DD$ by \cite[Corollary 2.1.4]{invariantIitakadimension}. If $|D|_\bR=\emptyset$, let $\kappa_\iota(X,D)=-\infty$.

	  Next we define the \textit{invariant Iitaka volume} $\Ivol_{\iota}(D)$ of $D$ as follows. If $\kappa_\iota(D)\geq 0$, let $\DD$ be an element of $|D|_\bR$, and then
	$$\Ivol_{\iota}(D):=\limsup_{m\to \infty} \frac{h^0(\rounddown{m\DD})}{m^{\kappa_\iota(D)}/\kappa_\iota(D)!}.$$ Note that in this case $\Ivol_{\iota}(D)$ does not depend on the choice of $\DD$ by \cite[Corollary 2.1.4]{invariantIitakadimension}.
	If $\kappa_\iota(D)=-\infty$, then let $\Ivol_{\iota}(D)=0$.
\end{definition}

If $f: X\to Z$ is a contraction between two normal varieties and $D\sim_\bR f^*L$ for some big $\bR$-divisor $L$ on $Z$, then $\Ivol_{\iota}(D)=\vol(L)$. 

~\\
  The following example shows that in the case of $\bR$-divisors, usual Iitaka dimension and thus usual Iitaka volume do not behave well under $\bR$-linear equivalence classes.

\begin{example}
    Let $X:= \bP^1$ and $p,q$ two different points on $X$. For any real number $t>0$, consider the divisor $D(t):=tp-tq$. Clearly we have $D(t)\sim_{\bR}0$ and thus $\kappa_{\iota}(D(t))=0$. However, for any integer $m$, $\rounddown{mD(t)} = \rounddown{mtp-mtq} = \rounddown{mtp}-\roundup{mtq}$. So, if $t\notin \bQ$, $\rounddown{mD(t)} =  \rounddown{mt}p-\roundup{mt}q\sim -p$ and $\mathrm{deg}(\rounddown{mD(t)})=-1$ for all $m$, this implies $\kappa(D(t))=-\infty$.
\end{example}

From now on, for simplicity of notation, when we talk about the Iitaka volume of some $\bR$-divisor $D$, we mean the invariant Iitaka volume, and we still denote it $\Ivol(D)$.

\begin{definition}[b-divisors]
	Let $X$ be a normal variety. A \textit{b-divisor} $\textbf{M}$ is a collection of $\bR$-divisors $M_Y$ on $Y$ for each birational contraction $Y\to X$ from a normal variety and satisfies the following: if $\YY\to Y\to X$ are birational contractions, then the pushdown of $M_{\YY}$ on $Y$ is $M_Y$.
	
	We say a b-divisor $\textbf{M}$ is \textit{b-$\bR$-Cartier} if there is a birational contraction $Y\to X$ such that 
	\begin{itemize}
		\item $M_Y$ is $\bR$-Cartier, and
		\item if $\YY\to Y$ is a birational contraction, then $M_{\YY}$ is the pullback of $M_Y$.
	\end{itemize}
	In this case, we say that the b-$\bR$-Cartier divisor $\textbf{M}$ descends on $Y$ and is represented by $M_Y$. Note that the representation is  not unique, if $\YY \to X$ is another birational contraction and $M_{\YY}$ is an $\bR$-Cartier divisor on $\YY$, then $M_Y$ and $M_{\YY}$ define the same b-$\bR$-Cartier b-divisor if the pullbacks of $M_Y$ and $M_{\YY}$ to a common resolution of $Y$ and $\YY$ are the same.

    We say a b-divisor $\textbf{M}$ is \textit{NQC} if it can be written as an $\bR_{\geq 0}$-linear combination of nef b-Cartier b-divisors.
\end{definition}

\subsection{(Generalized) Pairs and Singularities}
\begin{definition}[Pairs and Singularities]
	Let $X$ be a normal quasi-projective variety and $B$ be an $\bR$-divisor on $X$. We say that $(X,B)$ is a \textit{sub-pair} if $K_X+B$ is $\bR$-Cartier. If in addition $B\geq 0$, then $(X,B)$ is a \textit{pair}.
	
	Let $D$ be a prime divisor over $X$, i.e. there is a birational model over $X$ such that $D$ is a prime divisor on this model. Let $W\to X$ be a log resolution of a sub-pair $(X,B)$ so that $D$ is a prime divisor on $W$. Let $K_W+B_W$ be the pullback of $K_X+B$. Define the \textit{log discrepancy} of the prime divisor $D$ as $1-\mu_D B_W$, where $\mu_D B_W$ means the coefficient of $D$ in $B_W$. We denote the log discrepancy of $D$ with respect to $(X,B)$ as $a(D,X,B)$.
	
	We say that a sub-pair $(X,B)$ is \textit{sub-klt} (resp. \textit{sub-lc}, \textit{sub-$\epsilon$-lc}) if $a(D,X,B)>0$ (resp. $a(D,X,B)\geq0$, $a(D,X,B)\geq \epsilon$) for every prime divisor $D$ over $X$. If $(X,B)$ is a pair, then we remove the sub and say the pair is klt (resp. lc, $\epsilon$-lc).
	
	Let $(X,B)$ be a sub-pair. A \textit{non-klt place} (resp. \textit{non-lc place}) is a prime divisor $D$ over $X$ such that $a(D,X,B)\leq 0$ (resp. $a(D,X,B)<0$). A \textit{non-klt center} (resp. \textit{non-lc center}) is the image of a non-klt place (resp. non-lc place).  The \textit{non-klt locus} (resp. \textit{non-lc locus}) of $(X,B)$ is the union of all non-klt places (resp. non-lc places) of $(X,B)$ and denoted as $\Nklt(X,B)$ (resp. $\Nlc(X,B)$).
\end{definition}

\begin{definition}[Generalized pairs and Singularities, {\cite[Definition 1.4, Definition 4.1]{effectiveIitaka}}]
	A \textit{generalized sub-pair} consists of
	\begin{itemize}
		\item a normal variety $X$ equipped with a projective morphism $X\to Z$,
		\item an $\bR$-divisor $B$ on $X$, and
		\item a b-$\bR$-Cartier b-divisor over $X$, represented by a projective birational morphism $\pi: \XX\to X$ and an $\bR$-Cartier $\bR$-divisor $\MM$ on $\XX$
	\end{itemize}
	such that $\MM$ is nef over $Z$ and $K_X+B+M$ is $\bR$-Cartier, where $M:=\pi_*\MM$. If in addition $B\geq 0$, then $(X,B+M)$ is a \textit{generalized pair}. Since a b-$\bR$-Cartier b-divisor is defined birationally, in practice we will often replace $\XX$ with a higher model and replace $\MM$ with its pullback.  In this article, we omit $Z$ but say the generalized pair is projective when $Z$ is a point. We also remark that in this article we often require the b-$\bR$-Cartier b-divisor $\textbf{M}$ to be NQC, so that we can write $\MM =\sum \mu_i M'_i$ with $\mu_i>0$, $M'_i$ are nef Cartier.
 
	Let $D$ be a prime divisor over $X$. Replace $\XX$ with a log resolution of $(X,B)$ such that $D$ is a prime divisor on $\XX$. We can write 
	$$K_{\XX}+\BB+\MM=\pi^*(K_X+B+M).$$ 
	Then we define the \textit{generalized log discrepancy} of  $D$ to be $a(D,X,B+M)=1-\mu_D \BB$.
	
	We say that $(X,B+M)$ is \textit{generalized klt} (resp. \textit{generalized lc}, \textit{generalized $\epsilon$-lc}) if $a(D,X,B+M)>0$ (resp. $a(D,X,B+M)\geq0$, $a(D,X,B+M)\geq \epsilon$) for every prime divisor $D$ over $X$. 

\end{definition}

We recall the following useful result that helps to bound the generalised log canonical threshold uniformly.

\begin{thm} [{\cite[Lemma 2.9]{BVGP}}]\label{bdd glct}
    Let \( d, r \in \mathbb{N} \) and \( \epsilon \in \mathbb{R}^{>0} \). Then there exists \( t \in \mathbb{R}^{>0} \) depending only on \( d, r, \epsilon \) satisfying the following. Assume that

\begin{itemize}
    \item \( (X, B + M) \) is a projective generalised \( \epsilon \)-lc pair of dimension \( d \) with data
    \[ X' \xrightarrow{\phi} X \text{ and } M', \]
    \item \( A \) is a very ample divisor on \( X \) with \( A^d \leq r \),
    \item \( N' \) is a nef \( \mathbb{R} \)-divisor on \( X' \) and \( D \) is an effective \( \mathbb{R} \)-divisor on \( X \),
    \item \( D + N \) is \( \mathbb{R} \)-Cartier where \( N = \phi_* N' \), and
    \item \( A - (B + M + N + D) \) is pseudo-effective.
\end{itemize}

Then
\[
(X, B + tD + M + tN)
\]
is generalised klt with nef part \( M' + tN' \).
\end{thm}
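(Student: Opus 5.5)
This statement is quoted from \cite[Lemma 2.9]{BVGP}, and the plan follows the standard argument for bounding log canonical thresholds in bounded families. Since $(X,B+M)$ is generalised $\epsilon$-lc and $A$ is very ample with $A^d\le r$, the variety $X$ lies in a bounded family. The first step reduces the question to a statement about usual pairs on a bounded family. Because $X$ is $\epsilon$-lc in the generalised sense, \cite{BVGP} allows us to pick a klt complement-type boundary $\Delta$ with $(X,\Delta)$ $\epsilon/2$-lc and $A-\Delta$ pseudo-effective. Concretely, choose a general member $H\sim 2A$ (or a bounded multiple of $A$) and write $\Delta=B+\tfrac{\epsilon}{2}\cdot$(general divisor $\sim_\bR M$ obtained from nef-ness of $M'$ plus $H$). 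This is legitimate because $M'+\phi^*A$ is big and nef, so it can be replaced by an effective divisor with small coefficients without destroying $\epsilon/2$-lc.

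The second step uses boundedness. The pairs $(X,\Supp(\Delta)+A')$ with $A'$ general in $|A|$ may not form a bounded family, because $\Delta$ can have unbounded support. The remedy is to use the bounded family of $X$ together with $A$ and pass to a common log resolution over the parameter space. The required uniform bound is \cite[Theorem 1.6]{BABI}-type: for an $\epsilon$-lc pair $(X,\Delta)$ with $X$ bounded by $A$ and any effective $\bR$-Cartier $L$ with $A-L$ pseudo-effective, one has $\mathrm{lct}(X,\Delta;|L|_\bR)\ge t_0$ for some $t_0=t_0(d,r,\epsilon)>0$. Apply this with $L$ equal to the effective divisor $D+N_{\text{eff}}$, where $N_{\text{eff}}$ is a general effective representative of $N$. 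To obtain this representative, note that $N'+\phi^*A$ is nef and big, so it is $\sim_\bR$ to an effective divisor $E'$ with arbitrarily small multiplicities away from a fixed locus. Set $N_{\text{eff}}=\phi_*E'-A$; more precisely, use $N+A\sim_\bR \phi_*E'$ and absorb the extra $A$ by replacing $A$ with $2A$ in the pseudo-effectivity bound.

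The third step converts back to generalised pairs. For a prime divisor $P$ over $X$, the generalised log discrepancy of $(X,B+tD+M+tN)$ with nef part $M'+tN'$ is at least the usual log discrepancy of $(X,B+tD+\text{(effective representatives of }M,tN))$. This holds because pulling back nef divisors contributes nonnegatively to discrepancies via the negativity lemma. Taking $t=t_0/2$, for example, and using $\epsilon$-lc with a margin of $\epsilon/2$, we get generalised klt.

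The main obstacle is step two: producing effective representatives of $M$ and $N$ whose pullbacks have controlled multiplicities at every divisor over $X$. A general member works only on $X'$, and pushing it forward can create bad singularities. I would handle this by working on $X'$ directly. Use the $\epsilon$-lc sub-pair $(X',B'+\text{general }G')$ with $G'\sim_\bR M'+tN'+\delta\phi^*A$, which is sub-$\epsilon/2$-lc for general $G'$. Then apply the bounded lct statement to the pair pulled back along $\phi$, taking $\phi^*A$ as the polarization, which is legitimate since $\phi^*A$ is big and nef and the log discrepancies are computed on $X$.
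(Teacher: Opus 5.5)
\paragraph{Verdict: genuine gap, though fixable.} The paper gives no proof of this statement; it simply quotes \cite[Lemma 2.9]{BVGP}. Your outline is the natural one: trade the nef parts for effective divisors, apply Birkar's lc-threshold bound for bounded families on $X$ (this is \cite[Theorem 1.6]{BABII}, not \cite{BABI}), then compare discrepancies. However, the step that actually carries the proof, namely building the auxiliary usual pair, is not done correctly.

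\paragraph{Where it breaks.}
\begin{enumerate}
\item The formula $\Delta=B+\frac{\epsilon}{2}(\cdots)$ in Step 1 cannot work. $K_X+B$ need not be $\bR$-Cartier, and Step 3 rests on $\phi^*(K_X+\Delta)\ge K_{X'}+B'$ on every model. This forces $\Delta-B$ to be a full copy of $M$ (plus something $\bR$-Cartier on $X$), represented by an effective divisor \emph{on $X'$}.
\item A general member $G\in|M+\delta A|_{\bR}$ chosen on $X$ does not suffice. It gives $\phi^*(K_X+B+G)=K_{X'}+B'+G'_0$, where $G'_0\sim_{\bR}M'+\delta\phi^*A$ pushes forward to $G$ but is in general not effective along $\phi$-exceptional divisors. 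So the inequality ``generalised log discrepancy $\ge$ usual log discrepancy'' in Step 3 fails.
\item The remedy in your last paragraph is not legitimate. \cite[Theorem 1.6]{BABII} needs a pair with effective boundary on a variety carrying a very ample divisor of bounded degree. Here $X'$ is unbounded, $B'$ is not effective, and $\phi^*A$ is not very ample.
\item $tN'$ cannot be put into $G'$. Only $D+N$ is $\bR$-Cartier, so $K_X+\phi_*(B'+G')$ need not be, and $t$ is the unknown.
\end{enumerate}

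\paragraph{The missing observation and the fix.} Pushing forward is harmless once the representative is chosen upstairs.
\begin{itemize}
\item \emph{The $M$ part.} By Kodaira's lemma, $\phi^*A\sim_{\bR}H'+E''$ with $H'$ ample and $E''\ge0$. Hence $M'+\delta\phi^*A\sim_{\bR}(M'+\delta H')+\delta E''$. For $0<\delta\ll1$ (depending on $X$, which is harmless) there is $G'\ge0$ in this class with $(X',B'+G')$ sub-$\frac{\epsilon}{2}$-lc.
\item Put $\Delta:=B+\phi_*G'$. Since $G'-M'-\delta\phi^*A$ is an $\bR$-combination of principal divisors, $\phi^*(K_X+\Delta)=K_{X'}+B'+G'$ \emph{exactly}. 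So $(X,\Delta)$ is $\frac{\epsilon}{2}$-lc.
\item $2A-\Delta$ is pseudo-effective, because $A-(B+M)=\bigl(A-(B+M+N+D)\bigr)+(N+D)$ and $N=\phi_*N'$ is pseudo-effective.
\item \emph{The $N$ part needs no smallness.} Take any $E'\ge0$ with $E'\sim_{\bR}N'+\phi^*A$, and set $L:=D+\phi_*E'$. Then $L$ is $\bR$-Cartier and $2A-L$ is pseudo-effective. Moreover $\phi^*L=\phi^{-1}_*D+F'+E'$, where $\phi^*(D+N)=N'+\phi^{-1}_*D+F'$ and $F'\ge0$ by the negativity lemma.
\item Apply \cite[Theorem 1.6]{BABII} to $(X,\Delta)$, the very ample divisor $2A$ and $L$. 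This gives $t_0=t_0(d,2^dr,\frac{\epsilon}{2})$ with $(X,\Delta+t_0L)$ lc. Hence $(X,\Delta+tL)$ is klt for $t:=t_0/2$.
\item Finally,
\[
\phi^*(K_X+\Delta+tL)-\bigl(\phi^*(K_X+B+tD+M+tN)-M'-tN'\bigr)=G'+tE'\ge0 ,
\]
and this stays effective after pulling back to any higher model. This is exactly the generalised klt property with nef part $M'+tN'$.
\end{itemize}
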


\subsection{(Generalised) canonical bundle formula}
We first recall the construction of usual adjunction formula for fiber spaces based on \cite{kawamataadjunctionformula, ambroadjunctionformula,ambromoduli}.
Let $(X,B)$ be a projective sub-pair and let $f:X\to Z$ be a contraction between quasi-projective normal varieties with $\dim Z>0$ such that $(X,B)$ is sub-lc near the generic fiber of $f$ and $K_X+B\sim_{\bR}0/Z$. 

Fix a prime divisor $D$ on $Z$ and let $t_D$ be the lc threshold of $f^*D$ with respect to $(X,B)$ over the generic point of $D$, i.e. $t_D$ is the largest number so that $(X,B+t_Df^*D)$ is sub-lc over the generic point of $D$. Now let $b_D=1-t_D$ and by basic argument there are finitely many prime divisors $\DD$ on $Z$ such that $b_\DD\neq0$. Hence we can define $B_Z=\sum b_D D$, where the sum runs over all the prime divisors on $Z$. 

Since $K_X+B\sim_{\bR}0/Z$, there is an $\bR$-Cartier $\bR$-divisor $L_Z$ on $Z$ such that $K_X+B\sim_{\bR}f^*L_Z$. Let $M_Z=L_Z-(K_Z+B_Z)$ and we have the following \textit{adjunction formula} $$K_X+B\sim_{\bR} f^*(K_Z+B_Z+M_Z).$$ We call $B_Z$ the \textit{discriminant divisor} and $M_Z$ the \textit{moduli divisor} of $(X,B)$ with respect to $f:X\to Z$. Note that $B_Z$ is uniquely determined but $M_Z$ is determined only up to $\bR$-linear equivalence. 

Take a commutative diagram 
\begin{displaymath}
	\xymatrix{
		\XX \ar[d]_{\ff} \ar[r]^\pi  & X \ar[d]^f         \\
		\ZZ \ar[r]^\mu  & Z   }
\end{displaymath}
such that $\mu$ and $\pi$ are birational contractions. Let $K_{\XX}+\BB$ be the pullback of $K_X+B$ on $\XX$ and similarly we can define a discriminant divisor $B_{\ZZ}$ and $L_{\ZZ}=\mu^*L_Z$ gives a moduli divisor $M_{\ZZ}$ so that 
$$K_{\XX}+\BB\sim_{\bR}\ff^*(K_{\ZZ}+B_{\ZZ}+M_{\ZZ}).$$ It is easy to see that $B_Z$ is the pushdown of $B_{\ZZ}$ and $M_Z$ is the pushdown of $M_{\ZZ}$. Therefore, $B_Z$ and $M_Z$ can be regarded as b-divisors.

It is known that when $(X,B)$ is lc over the generic point of $Z$, $(Z,B_Z+M_Z)$ is a generalized pair. Moreover, if $\textbf{M}$ is NQC, then we can find $\textbf{M}^Z$ in $Z$ to be NQC as well.
~\\

Next, we consider the generalised adjunction for fiber spaces as well.

\begin{definition}[Discrepancy b-divisor]
  Let $(X,B+M)$ be a g-sub pair. We define b-divisors $\textbf{A}(X,B+M)$ and $\textbf{A}^*(X,B+M)$ as follows:
  for any birational morphism $f: Y\rightarrow X$, define $\textbf{A}(X,B+M)_Y:=K_Y+M_Y-f^*(K_X+B+M)$ and $\textbf{A}^*(X,B+M) = \textbf{A}(X,B+M)_Y^{>-1}$.
  We define a sheaf on $X$ as follows: let $\pi: Y\rightarrow X$ be any log resolution of $(X,B+M)$ such that $\textbf{M}$ descends to $Y$, we define $\cO_X(\roundup{\textbf{A}^*(X,B+M)}) := \pi_*\cO_Y(\roundup{\textbf{A}_Y})$. It's easy to see that such definition does not depend on the choice of the log resolution.
\end{definition}

\begin{definition}[Generalised lc fibrations]
  Let $(X,B+M)$ be a g-sub-pair and $f:X\rightarrow Z$ a contraction. If
  \begin{enumerate}
      \item $(X,B+M)$ is glc over the generic point of $Z$,
      \item $\mathrm{rank}f_*\cO_X(\roundup{\textbf{A}^*(X,B+M)}) =1$, and
      \item $K_X+B+M\sim_{\bR} 0/Z$,
  \end{enumerate}
  then we say that $f:(X,B+M)\rightarrow Z$ is a glc-trivial fibration. We say $f:(X,B+M)\rightarrow Z$ is a gklt-trivial fibration if we replace the glc condition by gklt.
\end{definition}

 Given a glc-trivial fibration $f: (X,B+M)\rightarrow Z$, we construct the generalised adjunction formula as follows: for any prime divisor $D$ on $Z$, denote by $\eta_D$ the generic point of $D$. We define
 $$ t_D:= \mathrm{sup}\{a\in \bR| (X,B+af^*D+M) \quad \text{is g-sub-lc over}\quad \eta_D\}.$$

 We then define the \textit{discriminant divisor} $B_Z:=\sum_D(1-t_D)D$, where the sum runs over all the prime divisors on $Z$. As the usual pair case, $B_Z$ is a well-defined $\bR$-divisor as there are only finitely many prime divisors $\DD$ on $Z$ such that $b_D=1-t_D\neq 0$.

 Since $K_X+B+M\sim_{\bR}0/Z$, there is an $\bR$-Cartier $\bR$-divisor $L_Z$ on $Z$ such that $K_X+B+M\sim_{\bR}f^*L_Z$. Let $M_Z:= L_Z-K_Z-B_Z$ and it's called the \textit{moduli divisor} of $f:(X,B+M)\rightarrow Z$. Again $B_Z$ is uniquely determined but $M_Z$ is determined only up to $\bR$-linear equivalence.

 We can regard $B_Z$ and $M_Z$ as b-divisors similar to the usual pair case. When we want to emphasize the b-divisor structure, we use the notation $\textbf{B}^Z$ and $\textbf{M}^Z$ instead.

 We recall the following result regarding the g-lc fibration of $\bR$-divisors.

\begin{lem}{\rm(Adjunction formula of glc fibrations with $\bR$-coefficients, \cite[Theorem 2.23]{JLX22})}\label{adjunction formula for R coefficients}
	Let $(X,B+M)$ be a g-sub-pair and $f:(X,B+M)\rightarrow Z$ be a glc-trivial fibration, such that
    \begin{itemize}
        \item either $B\geq 0$ over the generic point of $Z$, or
        \item $\textbf{M}$ is semi-ample over $Z$.
    \end{itemize}
    Then there exists a g-sub-pair $(Z,B_Z+M_Z)$ such that $K_X+B+M\sim_{\bR}f^*(K_Z+B_Z+M_Z)$. Moreover, if $(X,B+M)$ is glc (resp. gklt), then $(Z, B_Z+M_Z)$ is also glc (resp. gklt). If $\textbf{M}$ is NQC, then we can assume $\textbf{M}^Z$ to be NQC as well.
\end{lem}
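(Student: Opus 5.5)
The plan is to reduce to the $\bQ$-coefficient case by writing $K_X+B+M$ as a convex combination of $\bQ$-divisors. We start on a log resolution $\pi:\XX\to X$ of $(X,B)$ on which $\textbf{M}$ descends, and set $K_{\XX}+\BB+\MM=\pi^*(K_X+B+M)$. Since $\textbf{M}$ is NQC, we can write $\MM=\sum\mu_jM_j'$ with $M_j'$ nef Cartier and $\mu_j\ge 0$. The coefficients of $\BB$ and the $\mu_j$ are finitely many reals. We then apply the standard rational polytope argument to the linear space they span over $\bQ$. Call $(\BB^{\circ},\mu^{\circ})$ the point given by the actual data; it lies in a rational affine subspace $V$ cut out by the linear conditions with rational coefficients that encode the data. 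Near this point we choose rational points $(\BB_i,\mu_{ij})\in V$ and positive reals $r_i$ with $\sum r_i=1$, such that $\BB=\sum r_i\BB_i$ and $\mu_j=\sum_i r_i\mu_{ij}$.

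For each $i$ we define $\MM_i=\sum_j\mu_{ij}M_j'$; this is nef when $\mu_{ij}\ge0$, which holds after shrinking. We also set $B_i=\pi_*\BB_i$, which is effective where $B$ is. Three properties must be checked for the g-sub-pairs $(X,B_i+M_i)$.
\begin{enumerate}
\item Each $K_X+B_i+M_i$ is $\bR$-Cartier. This is a rational linear condition: the relation $K_{\XX}+\BB+\MM\equiv 0/X$ on exceptional curves, or more precisely $\sim_{\bR}0/X$, is a rational linear condition by the usual Shokurov polytope lemma.
\item Each $K_X+B_i+M_i\sim_{\bR}0/Z$. This also follows from the polytope lemma, applied to the finitely many prime divisors and Cartier divisors involved in a witnessing relation $K_X+B+M-f^*L=\sum a_k(\phi_k)$.
\item Each $(X,B_i+M_i)$ is glc (resp. gklt), and $\mathrm{rank} f_*\cO_X(\roundup{\textbf{A}^*})=1$ is preserved.
\end{enumerate}
The third property holds because these are open conditions in the coefficients on a fixed log resolution, except for coefficient-one components. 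Those components we keep fixed by including the equality $\mu_D\BB=1$ among the rational linear constraints.

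This is the main obstacle. Openness in lc is not literally open: boundary-one components and lc centers impose equalities. In the klt or gklt case the condition is genuinely open. In the glc case we add, as constraints in $V$, that coefficients equal to $1$ stay equal to $1$; coefficients less than $1$ remain less than $1$ for nearby rational points. The rank condition only concerns the generic fiber and the round-up of components with coefficient in $(-1,0]$ versus $\le -1$. For nearby points the sets $\{\mu_D\BB_i\le -1\}$ agree with $\{\mu_D\BB\le -1\}$ once equality cases are fixed in the same way, so the sheaf and hence its rank are unchanged.

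Once the decomposition is constructed, Lemma \ref{adjunction formula for R coefficients} and its $\bQ$-version apply to each $f:(X,B_i+M_i)\to Z$, each of which is a glc-trivial fibration with $\bQ$-coefficients. They give $K_X+B_i+M_i\sim_{\bQ}f^*(K_Z+B_{Z,i}+M_{Z,i})$, where $(Z,B_{Z,i}+M_{Z,i})$ is a $\bQ$-g-sub-pair that is glc (resp. gklt) and has NQC moduli b-divisor. Next we check that the discriminant is additive under the convex combination, i.e. $B_Z=\sum r_iB_{Z,i}$. The reason is that lc thresholds over $\eta_D$ are computed on a fixed log resolution as a minimum of affine functions of the coefficients. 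The combinatorial type of the minimizing divisor is constant near the original point, since all the equalities are imposed, so the threshold is affine there. We then define $\textbf{M}^Z:=\sum r_i\textbf{M}^Z_i$. With this choice $K_X+B+M\sim_{\bR}f^*(K_Z+B_Z+M_Z)$, $(Z,B_Z+M_Z)$ is glc (resp. gklt) as a convex combination, and $\textbf{M}^Z$ is NQC as a nonnegative combination of NQC b-divisors.

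We also treat the case in which $\textbf{M}$ is semi-ample over $Z$ but $B$ is not effective over the generic point. There we approximate in the same way, keeping each $\MM_i$ semi-ample over $Z$; this is a rational-combination argument on the finitely many semi-ample generators. We then apply the $\bQ$-version of the formula in that setting.
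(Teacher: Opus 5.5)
Your reduction has a genuine gap at its first step. You write ``Since $\textbf{M}$ is NQC, we can write $\MM=\sum\mu_jM_j'$'', but NQC is not a hypothesis of the main assertion. The lemma asks for a g-sub-pair $(Z,B_Z+M_Z)$, and hence a b-nef $\textbf{M}^Z$, for an arbitrary b-nef $\textbf{M}$ with real coefficients; NQC enters only in the last sentence. Without NQC your rational-polytope step is not just unjustified but can be impossible, because nefness is not a rational polyhedral condition near $\MM$.

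For example, let $f\colon X=Z\times E\to Z$ be the projection, with $E$ an elliptic curve and $Z$ an abelian surface whose nef cone has an irrational extremal ray. Take $B=0$ and $\MM=f^*N$ with $N$ nef spanning that ray. This is a gklt-trivial fibration with $B\ge 0$, and $\textbf{M}$ is trivially semi-ample over $Z$. Yet restricting to a section $Z\times\{e\}$ shows that $\MM$ has no expression $\sum r_i\MM_i$ with $r_i>0$ and $\MM_i$ nef $\bQ$-divisors.

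So what you have proved is the NQC case only. That covers every use of the lemma in this paper, where the nef part is always a nonnegative combination of nef Cartier divisors. It does not cover the lemma as stated: the general case needs an argument that handles the nef part directly rather than by rational approximation. The paper itself gives no proof and imports the statement from \cite[Theorem 2.23]{JLX22}.

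The same issue affects your semi-ample alternative. Perturbing along semi-ample$/Z$ generators keeps $\MM_i$ semi-ample over $Z$ but not nef, and nefness is needed for $(X,B_i+M_i)$ to be a g-sub-pair at all. You would have to perturb in the joint rational space of both conditions, which again presupposes NQC. Also, the $\bQ$-input must be an external theorem (e.g.\ Filipazzi's generalized canonical bundle formula), not Lemma~\ref{adjunction formula for R coefficients} itself, as your wording suggests.

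A smaller point concerns your additivity of the discriminant. You argue it one divisor at a time, and the neighbourhood of the original point in $V$ you need depends on the divisor. On $Z$ this is harmless: only the finitely many prime divisors not meeting an open set over which your resolution is relatively snc can have $t_D\neq 1$.

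Over higher models of $Z$, however, there are infinitely many divisors. Concavity of the thresholds in the coefficients only gives $\textbf{B}^Z\le\sum r_i\textbf{B}^Z_i$, i.e.\ $\textbf{M}^Z\ge\sum r_i\textbf{M}^Z_i$. To identify the actual moduli b-divisor with $\sum r_i\textbf{M}^Z_i$, you need a model of $Z$ on which the $\textbf{M}^Z_i$ descend uniformly for all rational points near the original one. For the literal statement you can sidestep this by simply defining $(Z,B_Z+M_Z)$ as the convex combination $\sum r_i(Z,B_{Z,i}+M_{Z,i})$.
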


\subsection{Minimal model program}
We will use standard results of the minimal model program (cf.\cite{BCHM10}). Assume $(X,B)$ is a pair and  $(X,B)\rightarrow Z$ is a projective morphism. Assume $H$ is an ample$/Z$ $\bR$-divisor such that $K_X+B+H$ is nef$/Z$. If $(X,B)$ is klt, we can run an MMP$/Z$ on $K_X+B$ with scaling of $H$. We know that such MMP terminates when $(X,B)$ is klt and if either $B$ or $K_X+B$ is big.

For a generalised lc pair $(X,B+M)$ with data $\pi:\XX \rightarrow X$ and $\MM$, we can also run MMP on $K_X+B+M$ over $Z$ with scaling of an ample divisor if there exists some boundary $\Delta$ such that $(X,\Delta)$ is klt. Such MMP terminates when $B+M$ or $K_X+B+M$ is big. We refer to \cite[Lemma 4.4]{effectiveIitaka} for more details.

The following is a g-pair version of \cite[Theorem 1.1]{HX13}.

\begin{thm}[{\cite[Theorem 1.3]{LX23}}]\label{g-pair special MMP}
  Let $((X, B, \mathbf{M})/U$ be an NQC glc g-pair and $U^0 \subset U$ a non-empty open subset. Let \(X^0 := X \times_U U^0\), \(B^0 := B \times_U U^0\), and \(\mathbf{M}^0 := \mathbf{M} \times_U U^0\). Assume that

\begin{enumerate}
    \item \((X^0, B^0, \mathbf{M}^0)/U^0\) has a good minimal model, and
    \item any glc center of \((X, B, \mathbf{M})\) intersects \(X^0\).
\end{enumerate}

Then \((X, B, \mathbf{M})/U\) has a good minimal model.
\end{thm}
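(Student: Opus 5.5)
The plan follows the strategy of Hacon--Xu \cite[Theorem 1.1]{HX13}, with the generalised pair machinery used in place of the usual pair machinery at each step.

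\emph{Reduction.} First pass to a $\bQ$-factorial generalised dlt modification $(Y,B_Y+\mathbf{M}_Y)\to(X,B+\mathbf{M})$. A good minimal model of the modification over $U$ gives one for $(X,B,\mathbf{M})/U$. The two hypotheses are preserved:
\begin{itemize}
\item over $U^0$ the modification still has a good minimal model, since the two are crepant;
\item every glc centre of $Y$ maps onto a glc centre of $X$, hence meets $Y^0$.
\end{itemize}
Next use hypothesis (1). Let $Y^0\dashrightarrow Y^0_{m}\to T^0$ be a good minimal model followed by its ample model over $U^0$. Compactify $T^0$ to a model $T/U$ and take a common resolution $W$ of $Y$ and $T$, with all maps replaced by high enough models. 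The NQC structure lets us write, on $W$,
$$K_W+B_W+M_W\sim_{\bR} g^*H_T + F - G,$$
where $H_T$ is nef over $U$ and big over the relevant base, $G\ge 0$ is exceptional over $Y$, and $F\ge 0$ is the sum of two parts:
\begin{itemize}
\item a part supported over $U\setminus U^0$;
\item a part that is very exceptional over $T$, coming from the divisors contracted by $Y^0\dashrightarrow Y^0_m$.
\end{itemize}
Increasing the coefficients of the exceptional divisors in $B_W$ to $1$ absorbs $G$. This does not create glc centres away from $U^0$ that are not already present. The upshot is that it suffices to find a good minimal model of a g-dlt pair whose log canonical divisor is $\bR$-linearly equivalent, over $T$, to an effective divisor $F$ that is very exceptional over $T$ outside $U^0$.

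\emph{Running the MMP.} Run a $(K_W+B_W+M_W)$-MMP over $T$ with scaling of an ample divisor; it exists for NQC g-pairs by \cite[Lemma 4.4]{effectiveIitaka}. Here hypothesis (2) enters. Every glc centre meets the locus over $U^0$, and over $U^0$ the divisor is already semi-ample. So, by g-pair special termination, flips eventually avoid the glc centres, i.e.\ the boundary $\lfloor B_W\rfloor$. Away from $\lfloor B_W\rfloor$ the pair can be perturbed to a g-klt pair with big boundary over $T$, so the MMP terminates. The negativity lemma for very exceptional divisors (Birkar's lemma) then shows that $F$ is contracted at the end. Hence the output $W_m$ satisfies $K_{W_m}+B_{W_m}+M_{W_m}\sim_{\bR} 0/T$, and so is the pullback of $H_T$ up to $\bR$-linear equivalence.

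\emph{Abundance and main obstacle.} It remains to show that the resulting nef divisor is semi-ample over $U$. The plan is to use induction on dimension, applied to glc centres via generalised adjunction, together with a gluing argument on $\lfloor B_{W_m}\rfloor$. Over $U^0$, semi-ampleness is given, and hypothesis (2) again ensures that the glc centres are controlled by their behaviour over $U^0$. I expect the main obstacle to be the special termination and semi-ampleness steps in the NQC g-pair setting. The nef part $\mathbf{M}$ cannot be perturbed freely: adjunction to glc centres produces only NQC, not semi-ample, moduli parts. So one must work with g-dlt adjunction and the $\bR$-coefficient canonical bundle formula (Lemma \ref{adjunction formula for R coefficients}) carefully, decomposing into $\bQ$-parts where needed. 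This is where I would concentrate the technical work.
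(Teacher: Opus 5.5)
The paper gives no proof of this statement. It is quoted verbatim from \cite[Theorem 1.3]{LX23}, so the benchmark is what that result needs. Your outline has the right Hacon--Xu shape \cite{HX13}, but it stops exactly where the content lies. Semi-ampleness over $U$ is left as ``the main obstacle'', with only induction on glc centres plus gluing on $\rounddown{B_{W_m}}$ as a plan, and that plan does not close the argument, because for NQC glc pairs nef and big does not give semi-ample. For example, let $X=\bP_E(\cO_E\oplus L)$ over an elliptic curve $E$, with $\deg L>0$, projection $\pi\colon X\to E$ and disjoint sections $C_0,C_\infty$; then $K_X+C_0+C_\infty\sim 0$ and $C_\infty$ is nef and big. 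Take $B=C_0$ and nef part $M=2C_\infty+\pi^*N$, where $N$ has degree $0$ and is non-torsion. This pair is glc, and $K_X+C_0+M\sim C_\infty+\pi^*N$ is nef and big, yet its restriction to the glc centre $C_0$ is $N$, so it is not semi-ample. Everything therefore hinges on the glc centres, and gluing across the strata of $\rounddown{B}$ is an abundance-type problem that induction alone does not solve. What makes arguments of this type work is log bigness. First, descend to the base $T$ of the relative Iitaka fibration via the generalised canonical bundle formula (Lemma~\ref{adjunction formula for R coefficients}, with NQC moduli part). Next, show that every glc centre of $(T,B_T+M_T)$ is dominated by a glc centre upstairs, hence meets $T^0$ by (2), where $K_T+B_T+M_T$ is ample over $U^0$. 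Then run an MMP over $U$ on the base; it is an isomorphism over $U^0$, and its termination is where special termination and induction on glc centres come in. The output is nef over $U$ and big on every glc centre. Finally, you need a semi-ampleness theorem for nef, log big NQC glc pairs. That is the kind of result \cite{LX23} provides, and your proposal neither proves nor invokes one.

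Three earlier steps are also wrong as written. (i) Putting coefficient $1$ on all exceptional divisors of $W\to Y$ does in general create new glc centres away from $U^0$. Divisors extracted to resolve $Y\dashrightarrow T$ over $U\setminus U^0$ become components of $\rounddown{B_W}$ disjoint from $W^0$, so hypothesis (2) fails for the new pair, and you use (2) right afterwards. Instead, take a resolution that is an isomorphism over the generic points of all glc centres of the g-dlt model, and give its exceptional divisors coefficient $1-\epsilon$. These divisors all have positive generalised log discrepancy, so the difference from the crepant pullback is effective and exceptional. (ii) Nothing in your construction gives both $F\geq 0$ very exceptional over $T$ and $H_T$ nef over $U$. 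The discrepancy over $U\setminus U^0$ has no sign, and making it effective and very exceptional means twisting $H_T$ by divisors on $T$ supported over $U\setminus U^0$, which in general destroys nefness over $U$. So the output of the MMP over $T$ only satisfies $K_{W_m}+B_{W_m}+M_{W_m}\sim_{\bR}g_m^*L_T$, with $L_T$ agreeing with $H_T$ only over $U^0$. There is no ``resulting nef divisor'' yet, which is why the further MMP over $U$ above is needed. (iii) Termination over $T$ does not come from perturbing to a g-klt pair with big boundary. The general fibre of $W\to T$ has $K+B+M$ of Iitaka dimension $0$, so nothing makes $B_W+M_W$ big over $T$, and an NQC nef part that is not big cannot be absorbed into a klt boundary. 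Termination comes instead from the very exceptional structure, as in Birkar's argument in \cite{Bir12}. If the scaling numbers stay bounded below by some $\lambda>0$, the ample scaling divisor supplies the bigness. If they tend to $0$, then after the last divisorial contraction the transform of $F$ is a limit of movable divisors over $T$. The general negativity lemma \cite[Lemma 3.3]{Bir12} then forces it to vanish, after which no negative extremal rays remain.
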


\subsection{Bounded families}
\begin{definition}[Bounded families of couples and pairs]
	A \textit{couple} consists of a projective normal variety $X$ and a reduced divisor $D$ on $X$. We say that two couples $(X,D)$ and $(\XX,\DD)$ are isomorphic if there is an isomorphism $X\to \XX$ mapping $D$ onto $\DD$. 
	
	Let $\cP$ be a set of couples. Assume that 
	\begin{itemize}
		\item there exist finitely many projective morphisms $V^i\to T^i$ of varieties,
		\item $C^i$ is a reduced divisor on $V^i$, and
		\item  for each $(X,D)\in \cP$ there exists an $i$, a closed point $t\in T^i$ and an isomorphism $\phi: V^i_t\to X$ such that $(V^i_t,C^i_t)$ is a couple and $\phi_*C^i_t\geq D$. 
	\end{itemize}
	Then we say that $\cP$ is \textit{bounded}. This is equivalent to say that there is a positive integer $r$ such that for each $(X,D)\in \cP$, we can find a very ample divisor $A$ on $X$ such that $A^{\dim X}\leq  r$ and $D\cdot A^{\dim X-1}\leq r$ (cf. \cite[Lemma 2.20]{BABI}).
	
	A set of projective lc pairs $(X,B)$ is said to be bounded if the set of $(X,\Supp B)$ forms a bounded family of couples. Note that if $B\in \Phi$ where $0$ is not an accumulation point of $\Phi$, e.g. when $\Phi$ is DCC, this is equivalent to the existence of a positive integer $r$, such that for each pair $(X,B)$, there is very ample divisor $A$ on $X$ such that $A^d\leq r$ and $(K_X+B)\cdot A^{d-1}\leq r$.

    Now suppose $\cE\subset \cG_{\text{glc}}(d,\Phi)$ be a subset of generalised pairs $(X,B+M)$ such that $(X,B+M)$ is projective glc with data $\pi:\XX\rightarrow X$ and $\MM$, such that $B\in \Phi$ and $\MM=\sum \mu_iM'_j$ with $\mu_j\in \Phi$ and $M'_j$ are nef Cartier. Assume $0$ is not an accumulation point of $\Phi$, then we say $\cE$ forms a bounded family, if there is a positive integer $r$ such that for each $(X,B+M)\in \cE$, there is a very ample divisor $A$ on $X$ with $A^d\leq r$ and $(K_X+B+M)\cdot A^{d-1}\leq r$.
    Note that this implies $(X,\Supp B)$ form a bounded family of couples, but we cannot control $\Supp M$ in general, as $M$ is not necessarily effective. In practice we can only bound $\Supp M$ up to $\bR$-linear equivalence.
\end{definition}

We recall several useful facts about boundedness of g-pairs.

\begin{definition}[{\cite[Definition 1.1]{BVGP}}]\label{familyofgpairs}
	Let $d\in \bN$ ,$\Phi\subset \bR^{\geq 0}$, and $v\in \bR^{>0}$.
	\begin{enumerate}[itemsep=13pt]
		\item Let $\cF_{gklt}(d,\Phi)$ be the set of projective generalized pairs $(X,B+M)$ with data $\XX\to X$ and $\MM$ such that 
		\begin{itemize}
			\item $(X, B+M)$ is generalized klt of dimension $d$,
			\item $B\in \Phi$,
			\item $\MM=\sum \mu_i\MMi$ where $\MMi$ is Cartier nef and $\mu_i\in \Phi$ for any $i$, and
			\item $K_X+B+M$ is ample.
		\end{itemize}
		
		\item Let $$\cF_{gklt}(d,\Phi,v)\subseteq \cF_{gklt}(d,\Phi)$$ consist of those $(X,B+M)$ such that $\vol(K_X+B+M)=v$. Similarly, let $$\cF_{gklt}(d,\Phi,\leq v)\subseteq \cF_{gklt}(d,\Phi)$$ consist of those $(X,B+M)$ such that $\vol(K_X+B+M)\leq v$.
		
\end{enumerate}
\end{definition}

\begin{thm}[{\cite[Theorem 1.4]{BVGP}, \cite[Lemma 6.6]{VGP}}]\label{bdd gpair}
  Notation as in Definition \ref{familyofgpairs}. Let $d\in \bN$, $\Phi \subset \bR^{\geq 0}$ a DCC set, and $v\in \bR^{>0}$. Then the set $\cF_{gklt}(d,\Phi,v)$ forms a bounded family. Moreover, when $\Phi$ is a finite set, the subset $\cF_{gklt}(d,\Phi,\leq v,\epsilon)\subset \cF_{gklt}(d,\Phi,\leq v)$ such that $(X, B+M)$ is generalised $\epsilon$-lc, forms a bounded family.
\end{thm}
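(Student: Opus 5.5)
Both assertions are boundedness statements for generalised klt pairs with ample log canonical divisor, and I would prove them by the standard three-step scheme: \emph{effective birationality}, then \emph{log birational boundedness}, then passing from a bounded birational model to the ample model itself. Throughout I write $L=K_X+B+M$.

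\textbf{Step 1: effective birationality.} I would first show there is $m\in\bN$, depending only on $d$ and $\Phi$ (and on $\epsilon$ in the second part), such that $|\rounddown{mL}|$ defines a birational map for every pair in the family. Two inputs are needed.
\begin{itemize}
\item The volumes $\vol(L)$ of pairs in $\cF_{gklt}(d,\Phi)$ form a DCC set, so in particular they are bounded below by a positive constant. This is the g-pair analogue of \cite{ACCLCT}, proved in \cite{BVGP}.
\item Effective birationality for generalised pairs with DCC coefficients in the boundary and in the nef part. The standard argument creates non-klt centres by cutting down with sections of $\rounddown{kL}$. Since $M$ is only defined up to linear equivalence, the nef part has to be handled with Theorem~\ref{bdd glct}: it lets me add $tN$ for nef $N$ uniformly while keeping the pair generalised klt.
\end{itemize}

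\textbf{Step 2: log birational boundedness.} With $m$ fixed and $\vol(mL)=m^dv$ bounded (respectively $\le m^dv$), a standard argument applies to the image of $|\rounddown{mL}|$ together with the support of $B$ and the exceptional locus. It produces a bounded family of log smooth couples $(W,\Sigma_W)$ with birational maps $W\dashrightarrow X$ such that $\Sigma_W$ contains the birational transform of $\Supp B$ and the exceptional divisors. The b-divisor $\textbf{M}$ must also be controlled here. Since $\MM=\sum\mu_j\MMi$ with $\MMi$ nef Cartier, I would pass to a common resolution and bound $\MMi\cdot H^{d-1}$ by $L\cdot H^{d-1}$, which lets the nef part be bounded up to linear equivalence.

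\textbf{Step 3: from the bounded model back to $X$.} This is the main obstacle.
\begin{itemize}
\item \emph{First part (volume exactly $v$, DCC coefficients).} I would put on $W$ the boundary $\Delta_W=B_W^{\text{transform}}+(1-\delta)E$ plus the pulled-back nef part, and compare $\vol(K_W+\Delta_W+M_W)$ with $\vol(L)=v$. The DCC of volumes forces the limiting volume to equal $v$ for suitable small $\delta$ uniform in the family. Then $X$ is the generalised lc model of $(W,\Delta_W+M_W)$. Because $W$ moves in a bounded family and the coefficients may be taken in a finite set after this perturbation, the lc models are bounded; I would use finiteness of models or invariance of plurigenera in families to realise them as fibres of a bounded family. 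The delicate point is that the perturbation has to keep the nef part intact, since only its linear equivalence class is bounded. I would try to deduce this ACC/DCC-for-volumes step from the DCC statement of \cite{BVGP} applied to limits, in the spirit of \cite{ACCLCT}.
\item \emph{Second part ($\Phi$ finite, generalised $\epsilon$-lc, volume $\le v$).} The $\epsilon$-lc hypothesis replaces the volume-rigidity argument. Only finitely many divisors over $X$ have generalised log discrepancy at most $1$ relative to the bounded model, so I would extract exactly those on a bounded model of $W$. Then $X$ is obtained as the ample model of a pair with finitely many possible coefficients on a bounded family, which is bounded by the same model-finiteness argument. Since the volume bound is only an inequality, the discrete set of values coming from finite coefficients on bounded data is what makes this work.
\end{itemize}
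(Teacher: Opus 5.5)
\textbf{There are genuine gaps.} Your proposal amounts to reproving \cite[Theorem 1.4]{BVGP}, and an $\bR$-coefficient version of \cite[Lemma 6.6]{VGP}, from scratch. The decisive steps are not carried out.

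\emph{Step 1.} Theorem \ref{bdd glct} cannot be used here. It presupposes a very ample $A$ on $X$ with $A^d\le r$, i.e.\ that $X$ is already bounded, which is what you are trying to prove. Effective birationality for generalised pairs with DCC data is a separate result, \cite[Theorem 1.3]{effectiveIitaka}, and should simply be quoted.

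\emph{Step 3, first part.} You correctly identify the crux but leave it unresolved. The b-divisor $\textbf{M}$ need not descend to the log birationally bounded model $W$, so you only have $\rho^*M_W\ge M'$. Consequently neither the volume comparison nor the description of $X$ as the lc model of $(W,\Delta_W+M_W)$ sees the nef part correctly. This is handled by passing to a further bounded model on which $M'$ descends (cf.\ \cite[Proposition 3.12]{BVGP}, as used in Step 4 of the proof of Theorem \ref{g-pair version of 6.4}).

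Your claim that the coefficients ``may be taken in a finite set after this perturbation'' is also unjustified. Perturbing the exceptional coefficients does nothing to the coefficients of $B$ or to the $\mu_j$, which stay in the DCC set $\Phi$. Their finiteness under fixed volume is a nontrivial statement that would itself need proof.

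\emph{Second part.} ``Only finitely many divisors over $X$ have generalised log discrepancy at most $1$'' is false as stated: every prime divisor on $X$ (or $W$) outside the boundary has log discrepancy exactly $1$. What is true is finiteness of \emph{exceptional} divisors over $W$ with log discrepancy $\le 1$ for a klt log smooth pair with coefficients $\le 1-\epsilon$. Even granting this, your ``model-finiteness'' conclusion for the ample model again needs the nef part controlled on the bounded model, which is the same unresolved issue.

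\textbf{What the paper does instead.} None of this machinery is redone. The first assertion is just \cite[Theorem 1.4]{BVGP}, and the second is known for $\bQ$-coefficients by \cite[Lemma 6.6]{VGP}. The only new content is the passage to $\bR$-coefficients, via the uniform rational polytope \cite[Theorem 3.15]{CGD20}.
\begin{itemize}
\item Write $K_X+B+M=\sum_i r_i(K_X+B_i+M_i)$ with $\sum r_i=1$ and $r_i$ in a finite set. Each $(X,B_i+M_i)$ is generalised $\epsilon'$-lc, $p(K_X+B_i+M_i)$ is integral, and $pM_i'$ is nef Cartier, with $\epsilon'$ and $p$ bounded.
\item Then $\vol(K_X+B_i+M_i)\le v/r_i^d$, so each piece lies in a bounded family of $\bQ$-coefficient g-pairs.
\item This gives a very ample $H$ with $H^d\le r$ and $(K_X+B_i+M_i)\cdot H^{d-1}\le r$.
\item Summing gives $(K_X+B+M)\cdot H^{d-1}\le r$.
\end{itemize}
This decompose-and-sum reduction to the rational case is the idea missing from your proposal. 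With it, effective birationality, log birational boundedness and the ample-model step never have to be redone for real coefficients.
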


\begin{proof}
    The first statement is just \cite[Theorem 1.4]{BVGP}, while the second statement is proved only for $\bQ$-divisors in \cite[Lemma 6.6]{VGP}. We show that it holds for $\bR$-divisors as well.
    Pick a generalized pair $(X,B+M)\in  \cF_{gklt}(d, \Phi,\leq v,\epsilon)$ with data $\XX\to X$ and $\MM$. By \cite[Theorem 3.15]{CGD20} there is a rational number $0<\epsilon'<\epsilon$, a finite set $\Psi$, and a positive integer $p$ depending only on $d,\Phi,\epsilon$ that we can write
	$$K_X+B+M=\sum_{i=1}^l r_i(K_X+B_i+M_i)$$ such that
	\begin{itemize}
		\item $\sum_{i=1}^l r_i=1$ and $r_i\in \Psi$,
		\item $(X,B_i+M_i)$ is a generalized $\epsilon'$-lc pair with nef part $\MMi$ on $\XX$ for any $i$, 
		\item $\MM=\sum_{i=1}^l r_i \MMi$, and
		\item $p(K_X+B_i+M_i)$ is integral, and $p\MMi$ is Cartier nef for any $i$.
	\end{itemize}

    Let $I:=\{\frac{i}{p}|i\in \bN\}$. Since $\vol(r_i(K_{X}+B_i+M_i))\leq \vol(K_X+B+M)\leq v$, we know that $\vol(K_{X_i}+B_i+M_i)\leq \frac{v}{r_i^d}=: v_i$. So in particular $(X,B_i+M_i)\in \cF_{gklt}(d,I,\leq v_i, \epsilon')$ and it belongs to a bounded family. In particular, there is a very ample divisor $H$ and a bounded positive integer $r$ such that $H^d\leq r$, $(K_X+B_i+M_i)\cdot H^{d-1}\leq r$ for all $i$. Thus $(K_X+B+M)\cdot H^{d-1}=(\sum_ir_i(K_X+B_i+M_i))\cdot H^{d-1} \leq (\sum_i r_i)\cdot r=r $ as well. This implies $\cF_{gklt}(d, \Phi,\leq v,\epsilon)$ also forms a bounded family.
\end{proof}

\begin{lem}\label{2.17}
Let $d,p,r\in \bN$ and $\Phi \subset \bR^{\geq 0}$ a DCC set. Consider a subset $\cE\subset\cF_{gklt}(d,\Phi)$ such that $pM'$ is Cartier nef, and for every $(X,B+M)\in \cE$, there is a very ample divisor $H$ such that $H^d\leq r$ and $(K_X+B+M)\cdot H^{d-1} \leq r$. Then there is a bounded $l\in \bN$ such that $lH-(K_X+B+M)$ is ample. 
\end{lem}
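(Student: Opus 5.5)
The plan is to reduce to a compactness statement over a bounded family, rather than a direct intersection-theoretic bound, because $L:=K_X+B+M$ is ample. Being ample, $L$ has no negative extremal rays, so the cone theorem for $(X,B+M)$ gives nothing. Theorem~\ref{bdd glct} only adds a small multiple $t<1$ of an auxiliary divisor, which cannot make the coefficient of $L$ negative. We want to show that the \emph{nef threshold}
$$\lambda(X,H,L):=\inf\{l\in\bR \mid lH-L \text{ is ample}\}$$
is bounded above by a constant depending only on $d,p,r,\Phi$.

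\textbf{Step 1: bounded parametrization.} Since $H^d\le r$ and $L\cdot H^{d-1}\le r$, the data $(X,H)$ lie in finitely many families $V^i\to T^i$ with a relatively very ample divisor $\mathcal H^i$. By the remark in the definition of bounded families, $(X,\Supp B)$ is also bounded, because $0$ is not an accumulation point of $\Phi$. So we may add a reduced divisor $C^i$ on $V^i$ with $B\le$ a combination of the components of $C^i_t$, and we get a bounded number of components of $B$, each with coefficient in $[0,1]$. For $M$ we have no control over $\Supp M$, only over its numerical class. Here $M=L-K_X-B$ and $pM'$ is Cartier, which is where the integer $p$ enters. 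After replacing $T^i$ by a stratification and a finite étale base change, I would arrange that $N^1(V^i_t)$ is locally constant with integral lattice, that $K_{V^i_t}$ and the components of $C^i_t$ give constant classes, and that $\lambda$ can be computed fibrewise on each stratum.

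\textbf{Step 2: compactness on a fixed stratum.} On a stratum, the class of $L$ lies in the set
$$\mathcal S:=\{\text{nef classes } \ell \text{ with } \ell\cdot H^{d-1}\le r\}\subset N^1(V_t)_{\bR}.$$
Since $H$ is ample, Kleiman's criterion shows $\mathcal S$ is compact: the hyperplane section $\ell\cdot H^{d-1}=\text{const}$ of the nef cone is compact. For each $\ell\in\mathcal S$ the set of $l$ with $lH-\ell$ ample is open, and ampleness is an open condition in $\ell$. So finitely many values of $l$ cover $\mathcal S$, and there is a uniform $l_0$ with $l_0H-\ell$ ample for all $\ell\in\mathcal S$ on that fibre.

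\textbf{Step 3: uniformity across the stratum and Noetherian induction.} I would use that ampleness is open in flat projective families, applied to the finitely many $\bQ$-Cartier generators of $N^1$ that are constant over the stratum. Then if $l_0\mathcal H-\ell$ is ample on one fibre for a given constant class, it is ample on an open neighbourhood of that fibre. Noetherian induction on $T^i$ then yields a single $l$ for the whole family, and hence for all of $\cE$. One could also add a Siu-type inequality: $\vol(mH-L)\ge m^dH^d-d\,m^{d-1}H^{d-1}\cdot L>0$ once $m>dr$. This shows $mH-L$ is big with bounded degree and gives a cross-check on the size of $l$, although bigness alone does not give nefness.

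\textbf{Main obstacle.} The hardest step is Step~3, in two respects. First, nef cones are not locally constant in families in general. Second, $M$ is only a pushforward of a nef divisor, so its class is not a priori one of finitely many. I would handle this by first showing that the classes of $L$ range over a set that is bounded and discrete modulo finitely many real parameters, namely the coefficients of $B$ and the coefficients of the nef part. These lie in $\Phi\cap[0,1]$ and $\tfrac1p\bZ$ respectively, and the degree bound makes the latter finite. I would then apply openness of ampleness to the finitely many resulting rational combinations, and use convexity of the ample cone to pass to the real combinations.
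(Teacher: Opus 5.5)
\textbf{There is a genuine gap, in Step 3.} Your Step 2 is correct for a single $X$: the slice $\{\ell \text{ nef},\ \ell\cdot H^{d-1}\le r\}$ is compact, so some $l$ works for that one variety. The argument fails when you try to make $l$ uniform, and the repair you sketch does not close the gap.

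First, there is in general no finite stratification of $T^i$, even after finite \'etale base change, over which $N^1(V^i_t)$ is locally constant. Picard numbers jump along countable unions of Noether--Lefschetz loci; this already happens for quartic surfaces in $\bP^3$. Moreover, a class on a special fibre need not come from any divisor on $V^i$. Openness of ampleness in flat families is a statement about a fixed divisor on the total space. So it cannot be applied to $L=K_X+B+M$, which exists only fibrewise.

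Second, your decomposition of the class of $L$ also fails. $X$ is not assumed $\bQ$-factorial, so $K_X$, the components of $B$, and $M=\pi_*M'$ need not be $\bQ$-Cartier; only their sum is $\bR$-Cartier. Hence they have no classes in $N^1(X)$. Also, ``$pM$ integral with bounded $H$-degree'' gives neither finitely many possibilities for $M$ nor a finite-type parameter space for it over the family. Indeed $M'$ lives on an unbounded model, and $M$ need not be effective. So the uniform $l$ is never actually produced.

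\textbf{What is missing.} You need to replace $L$ by an \emph{effective} divisor of bounded degree. Effective divisors of bounded degree on a bounded family do vary in a finite-type (Chow/Hilbert) family, which is exactly what your compactness-plus-Noetherian-induction scheme lacks. The paper does this as follows.
\begin{enumerate}
\item By effective birationality for generalised pairs \cite[Theorem 1.3]{effectiveIitaka}, there is a bounded $m$, which may be taken divisible by $p$, with $|\rounddown{mL}|\neq\emptyset$. This is where the DCC property of $\Phi$ and the integer $p$ are genuinely used.
\item Write $mL\sim D+F$ with $D\ge 0$ integral and $F=mL-\rounddown{mL}$. Since $mK_X$ and $mM$ are integral, $F$ is the fractional part of $mB$, so $0\le F\le \Supp B$.
\item Then $D\cdot H^{d-1}\le mr$. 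Hence $(X,\Supp D)$ is a bounded couple with finitely many possible coefficients, and \cite[Lemma 4.6]{moduliofalgebraicvarieties} gives a bounded $l_1$ with $ml_1H-D$ ample.
\item Boundedness of $(X,\Supp B)$, together with the bounded number of its components, gives a bounded $l_2$ with $ml_2H-F$ ample.
\item Adding the two, $m(l_1+l_2)H-mL$ is ample, so $l=l_1+l_2$ works.
\end{enumerate}
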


\begin{proof}
    By \cite[Theorem 1.3]{effectiveIitaka}, there is a bounded natural number $m$ depending only on $d,p,\Phi$ such that $|m(K_X+B+M)|$ defines a birational map. Replacing $m$ by $mp$, we may assume $p|m$. In particular, $h^0(m(K_X+B+M))\neq 0$ and there is an integral effective divisor $0\leq D\sim \rounddown{m(K_X+B+M)}$. We may write
    $$m(K_X+B+M)\sim D+F,$$ where $F:= m(K_X+B+M)-\rounddown{m(K_X+B+M)} \geq 0$ is an $\bR$-divisor. It's easy to see that $\Supp F\subset\Supp B$ and thus $F\leq \Supp B$. On the other hand, $D\cdot H^{d-1}\leq m(K_X+B+M)\cdot H^{d-1}\leq mr$, so by \cite[Lemma 4.6]{moduliofalgebraicvarieties}, there is $l_1\in \bN$ such that $ml_1H-D$ is ample (note that $(X, \Supp D)$ is in a bounded set of couples and since $D$ is integral and $D\cdot H^{d-1}$ is bounded, $D$ takes only finitely many coefficients). On the other hand, as $(X,\Supp B)$ forms a bounded set of couples, there is $l'\in \bN$ such that $ml'H-B_i$ is ample for all $i$, where $\Supp B= \bigcup_iB_i$. The number of irreducible components of $\Supp B$ is bounded by a fixed natural number $n$ depending only on $\Phi$ and $r$, take $l_2=nl'$. Then $ml_2H-F=mnl'H-F$ is ample. Now let $l=l_1+l_2$, we have $mlH-m(K_X+B+M)\sim (ml_1H-D)+(ml_2H-F)$ is ample.
\end{proof}
    
\subsection{Uniform decomposition of canonical bundle formula with real coefficients}

In this subsection, we recall the uniform rational polytope for canonical bundle formulas developed in \cite{effectiveadjunctionformulawithrealcoefficients}. This is one of the key ingredients in the proof of Theorem \ref{MAIN THM1} and Corollary \ref{MAIN COR2}. 

\begin{lem}[{\cite[Theorem 3.3]{effectiveadjunctionformulawithrealcoefficients}}]\label{Decomposition of usual div}
Let $d\in \bN$ and $\Phi\subset \bR^{\geq 0}$ a finite set. Then there exists a finite set $\Psi$ depending only on $d, \Phi$ satisfying the following. Assume that
\begin{itemize}
    \item $(X,B)$ is a projective lc pair of dimension $d$, \item $B\in \Phi$, and
    \item $f:X\rightarrow Z$ is a contraction with $K_X+B\sim_{\bR}0/Z$.
\end{itemize}

 Then we have a decomposition $$K_X+B =\sum_{i=1}^lr_i(K_X+B_i)$$   
 such that
 \begin{itemize}
     \item $r_1,\dots,r_l\in \Psi$ and $r_1,\dots,r_l$ are $\bQ$-linearly independent and $\sum_{i=1}^lr_i=1$,
     \item $(X,B_i)$ is lc and $\Nklt(X,B_i)=\Nklt (X,B)$,
     \item $K_X+B_i\sim_{\bQ}0/Z$,
     \item there is $q\in \bN$ depending only on $d,\Phi$ that $q(K_X+B_i)$ is integral for any $i$, and 
     \item if $\textbf{M}$ and $\textbf{M}_i$ are moduli part of the canonical bundle formula of $(X,B)$ and $(X,B_i)$ respectively, then $\textbf{M}=\sum_ir_i\textbf{M}_i$.
 \end{itemize}
\end{lem}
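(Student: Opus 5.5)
The plan is to reduce the statement to the theory of uniform rational polytopes, using the fact that the coefficients of $B$ lie in a fixed finite set.

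\emph{Step 1: parametrize the coefficients.} Let $1=a_0,a_1,\dots,a_s$ be a $\bQ$-basis of the $\bQ$-vector space spanned by $\{1\}\cup\Phi$; this depends only on $\Phi$. Each $b\in\Phi$ can be written as $b=\lambda_0(b)+\sum_k\lambda_k(b)a_k$ with $\lambda_k(b)\in\bQ$. Write $B=\sum_j b_jD_j$ and set $B(x)=\sum_j\bigl(\lambda_0(b_j)+\sum_k\lambda_k(b_j)x_k\bigr)D_j$ for $x\in\bR^s$. Then $B(a)=B$, and $B(x)$ is affine in $x$ with rational coefficients taken from a finite set depending only on $\Phi$. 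Let $q_0$ be a common denominator of all the $\lambda_k(b)$.

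\emph{Step 2: the relative triviality persists.} For any curve $C$ contracted by $f$, the quantity $(K_X+B(x))\cdot C$ is an affine function of $x$ with rational coefficients, and it vanishes at $x=a$. Since $1,a_1,\dots,a_s$ are $\bQ$-linearly independent, it vanishes identically. The same argument applies after first passing to a $\bQ$-factorialization, or by working with $\bR$-Cartierness through rational linear conditions. Hence $K_X+B(x)\equiv 0/Z$ for every $x$. For rational $x$ with $(X,B(x))$ lc, I will upgrade this to $K_X+B(x)\sim_{\bQ}0/Z$ using the known result that a numerically trivial lc divisor over $Z$ is $\bQ$-linearly trivial over $Z$ (Ambro, Gongyo).

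\emph{Step 3: a uniform neighbourhood, and the decomposition.} By the uniform lc rational polytope theorem (Nakamura's conjecture, proved by Han--Liu--Shokurov), there is an open neighbourhood $U\ni a$ in $\bR^s$, depending only on $d$ and $\Phi$, such that $(X,B(x))$ is lc for all $x\in U$ and all such $X$. Possibly after shrinking $U$, the lc places are unchanged, so $\Nklt(X,B(x))=\Nklt(X,B)$. This uniformity is the main obstacle, and I would invoke the theorem directly rather than reprove it. Next I choose rational points $x^1,\dots,x^{s+1}\in U$ forming a simplex with $a$ in its interior, again depending only on $d,\Phi$. Let $r_i$ be the barycentric coordinates of $a$ with respect to this simplex, and put $B_i=B(x^i)$. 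Then $\sum_i r_i=1$, and $K_X+B=\sum_i r_i(K_X+B_i)$ because $B(x)$ is affine in $x$. The $r_i$ form a fixed finite set $\Psi$. They are $\bQ$-linearly independent: the $r_i$ are rational affine functions of $a$ whose $\bQ$-span equals the span of $1,a_1,\dots,a_s$, which has dimension $s+1$. Finally, $q$ is the common denominator of the coordinates of the $x^i$ times $q_0$.

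\emph{Step 4: linearity of the moduli part.} For each prime divisor $D$ on a birational model $Z'$ of $Z$, the lc threshold $t_D(x)$ is a minimum of finitely many affine functions of $x$ with rational coefficients. These are read off from the coefficients of $f'^*D$ and of the pullback of $B(x)$ on a fixed log resolution. Each such function takes the value $t_D(a)$ at $a$. By $\bQ$-linear independence, every affine function achieving the minimum at $a$ is one whose difference from the others is not forced to vanish, and so $t_D(x)$ is affine on a neighbourhood of $a$. Arguing as in Step 2, only finitely many types of such functions occur, so this neighbourhood can be taken inside $U$ after shrinking it uniformly. Hence $\textbf{B}^Z$ is affine in $x$ near $a$. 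Choosing the $L_Z(x)$ compatibly (affine in $x$), the moduli part $\textbf{M}(x)=L_Z(x)-K_Z-\textbf{B}^Z(x)$ is affine as well, which gives $\textbf{M}=\sum_i r_i\textbf{M}_i$.
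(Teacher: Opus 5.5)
The paper gives no proof of its own here; it quotes \cite[Theorem 3.3]{effectiveadjunctionformulawithrealcoefficients}. Your Steps 1--3 are sound: you parametrize $B$ by a $\bQ$-basis of the span of $\{1\}\cup\Phi$, apply the Han--Liu--Shokurov uniform lc polytope, and take barycentric coordinates with respect to a fixed rational simplex. The $\bQ$-independence of the $r_i$ and the bound on $q$ are correct. The $\Nklt$ claim needs only a line. Lc places of $(X,B)$ have rational affine log discrepancy vanishing at $a$, hence everywhere. Conversely, if $a(E,X,B(x))=0$, then $x$ is the midpoint of $a$ and $2x-a\in U$, so $a(E,X,B)=0$. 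In Step 2 you do not need abundance for numerically trivial divisors. Being $\sim_{\bR}0/Z$ is itself a $\bQ$-rational linear condition on the finite-dimensional family $K_X+B(x)$, so it propagates from $a$ to all $x$ and gives $\sim_{\bQ}0/Z$ at rational $x$.

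The genuine gap is Step 4, which is exactly the part of the lemma not already contained in the uniform lc polytope. Since $t_D=\min_E\ell_E$ is concave, $t_D(a)\ge\sum_i r_it_D(x^i)$ is automatic. The identity $\mathbf{M}=\sum_i r_i\mathbf{M}_i$ is equivalent to equality, i.e.\ to $t_D$ being affine on the whole fixed simplex, for every fibration in the family and every prime divisor $D$ over $Z$. Your local argument needs one correction: only the minimizing $\ell_E$ take the value $t_D(a)$ at $a$, and by $\bQ$-independence they all coincide with one rational affine $\ell$. Even corrected, it gives affinity only on a neighbourhood of $a$ that depends on $X\to Z$ and on $D$. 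The claim that ``only finitely many types of such functions occur'' is false. The function $\ell_E(x)=a(E,X,B(x))/\mult_E f'^*D$ involves $\mult_E f'^*D$ and the multiplicities along $E$ of the pullbacks of the components of $B$, and these are unbounded. Nothing in your argument prevents a non-minimizing $\ell_E$ of large slope from crossing $\ell$ arbitrarily close to $a$.

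A second uniform input is needed; here is a sketch for $D$ a divisor on $Z$. Write $s_b(x)=\lambda_0(b)+\sum_k\lambda_k(b)x_k$. Cut $Z$ by general hyperplanes so that $D$ becomes a point $z$ of a curve. Take a dlt modification of $(X,B+t_D(a)f^*z)$ near $z$. Then run an MMP over $Z$ on the very exceptional sum of the fibre components that are not lc places. This yields a crepant model $f_Y\colon Y\to Z$ on which every component of $f_Y^*z$ has coefficient $1$. Each coefficient of $B_Y(x)+\ell(x)f_Y^*z$ is then a rational affine function whose value at $a$ is $1$ or some $s_b(a)$, hence it equals $1$ or $s_b$ identically. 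So this new family has coefficient functions in the fixed finite list $\{s_b\}\cup\{1\}$. Applying Han--Liu--Shokurov to it gives a neighbourhood, depending only on $d$ and $\Phi$, on which $t_z\ge\ell$, hence $t_z=\ell$. Divisors $D$ exceptional over $Z$ need the same on higher models, where the crepant pullback of $B$ is no longer effective, so a further reduction is required. As written, Step 4, and with it the linearity of the moduli b-divisor, is not established.
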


As pointed out in \cite[Remark 3.5]{effectiveadjunctionformulawithrealcoefficients}, the above result can also be extended to the g-pair version, which we shall state below.

\begin{lem}\label{Decomposition of g-div}
Let $d\in \bN$ and $\Phi\subset \bR^{\geq 0}$ a finite set. Then there exists a finite set $\Psi$ depending only on $d, \Phi$ satisfying the following. Assume that
\begin{itemize}    
\item $(X,B+M)$ is a projective generalised lc pair of dimension $d$ with data $\pi:\XX\rightarrow X$ and $\MM$, 
\item $B\in \Phi$, $\MM=\sum \mu_j M'_j$ with $\mu_j\in \Phi$ and $M'_j$ nef Cartier, and   
\item $f:X\rightarrow Z$ is a contraction with $K_X+B+M\sim_{\bR}0/Z$.
\end{itemize}

 Then we have a decomposition $$K_X+B+M =\sum_{i=1}^lr_i(K_X+B_i+M_i)$$    such that 
 \begin{itemize}    
 \item $r_1,\dots,r_l\in \Psi$ and $r_1,\dots,r_l$ are $\bQ$-linearly independent and $\sum_{i=1}^lr_i=1$,     
 \item $(X,B_i+M_i)$ is generalised lc with data $\pi: \XX \rightarrow X$ and $M_i'$, and $\Nklt(X,B_i+M_i)=\Nklt (X,B+M)$,     
 \item $K_X+B_i+M_i\sim_{\bQ}0/Z$,     
 \item there is $q\in \bN$ depending only on $d,\Phi$ that $q(K_X+B_i+M_i)$ is integral for any $i$, $qM'_i$ is nef Cartier, and     
 \item if $\textbf{M}_Z$ and $\textbf{M}_{i,Z}$ are moduli part of the canonical bundle formula of $(X,B+M)$ and $(X,B_i+M_i)$ respectively, then $\textbf{M}_Z=\sum_ir_i\textbf{M}_{i,Z}$. \end{itemize}
 \end{lem}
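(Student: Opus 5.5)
We will follow the proof of Lemma \ref{Decomposition of usual div} in \cite[Theorem 3.3]{effectiveadjunctionformulawithrealcoefficients}, as suggested in \cite[Remark 3.5]{effectiveadjunctionformulawithrealcoefficients}. The approach treats the nef part coefficients $\mu_j$ on the same footing as the boundary coefficients. Write $\Phi=\{a_1,\dots,a_k\}$ and choose $\bQ$-linearly independent reals $1=r_0',r_1',\dots,r_s'$ spanning the $\bQ$-span of $\Phi\cup\{1\}$. Each $a_m$ is then given by a $\bQ$-linear function $a_m=\alpha_m(1,r_1',\dots,r_s')$. Fix a combinatorial type: the number of components of $B$ with each coefficient, and the number of $M'_j$ with each $\mu_j$. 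We then regard
\[
K_X+B(\mathbf{x})+M(\mathbf{x}):=K_X+\sum_m \alpha_m(\mathbf{x})B^{(m)}+\sum_m \alpha_m(\mathbf{x})M'^{(m)}
\]
as a family parametrised by $\mathbf{x}\in\bR^s$. Here $B^{(m)}$ is the sum of components with coefficient $a_m$, and $M'^{(m)}$ is the sum of the $M'_j$ with $\mu_j=a_m$; the latter is nef Cartier. The true point is $\mathbf{r}'=(r_1',\dots,r_s')$.

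The first step is to show that the linear conditions are independent of the pair. Since $K_X+B+M$ is $\bR$-Cartier and $\sim_\bR 0/Z$, and $1,r_i'$ are $\bQ$-independent, the standard argument gives the following: for every $\mathbf{x}$ in the rational affine subspace $V$ cut out by the $\bQ$-linear relations forced by $\mathbf{r}'$, the divisor $K_X+B(\mathbf{x})+M(\mathbf{x})$ is $\bR$-Cartier and $\sim_\bR 0/Z$. In fact, for rational $\mathbf{x}\in V$ it is $\sim_\bQ 0/Z$. One argues this by writing $K_X+B+M-f^*L=\sum c_i\,\mathrm{div}(\phi_i)$ and separating the coefficients via $\bQ$-independence. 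Since $\mathbf{r}'$ has $\bQ$-independent coordinates, $V$ may in fact be taken to be all of $\bR^s$, after possibly adjusting by the combinatorial data. Crucially, the relevant subspace depends only on $\Phi$ and the type, not on $X$.

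The second step, which we expect to be the main obstacle, is to find a neighbourhood of $\mathbf{r}'$ of size depending only on $d,\Phi$ on which $(X,B(\mathbf{x})+M(\mathbf{x}))$ is generalised lc with the same non-klt locus. For usual pairs this is the uniform lc rational polytope argument, which uses the ACC for lc thresholds. For g-pairs we would invoke the g-pair ACC for glc thresholds (Birkar--Zhang) in the same way. Suppose no uniform neighbourhood exists. Then there is a sequence of pairs and points $\mathbf{x}_n\to\mathbf{r}'$ at which the pair fails to be glc. Interpolating linearly from $\mathbf{r}'$ toward $\mathbf{x}_n$ produces glc thresholds that increase strictly toward $1$ while the coefficients lie in a DCC set. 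This contradicts the ACC. The condition $\Nklt$ unchanged follows because, along the segment, a place with log discrepancy $0$ at $\mathbf{r}'$ stays at $0$ by linearity; otherwise the pair would fail to be glc on one side. Places with positive discrepancy stay positive on a small enough uniform neighbourhood, again by an ACC-type argument applied to the finitely many relevant discrepancy functions. Boundedness of the number of components is not available, but the argument is coefficientwise and only uses the finite set $\Phi$. We also use that only finitely many combinatorial types of $\bQ$-linear functions $\alpha_m$ occur.

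Finally, choose rational points $\mathbf{x}_1,\dots,\mathbf{x}_l$ in this neighbourhood, with bounded denominators depending only on $d,\Phi$, whose convex hull contains $\mathbf{r}'$. Write $\mathbf{r}'=\sum r_i\mathbf{x}_i$ with $\sum r_i=1$; after merging we may take the $r_i$ to be $\bQ$-linearly independent, and they lie in a finite set $\Psi$. Set $B_i=B(\mathbf{x}_i)$ and $M'_i=M'(\mathbf{x}_i)$. Then $q(K_X+B_i+M_i)$ is integral and $qM'_i$ is nef Cartier for a common denominator $q$. Also $K_X+B_i+M_i\sim_\bQ 0/Z$ by the first step, and $M'_i$ remains nef because the $M'^{(m)}$ are nef and the $\alpha_m(\mathbf{x}_i)$ stay positive near $\mathbf{r}'$. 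For the moduli parts, the discriminant is computed by thresholds that are linear in $\mathbf{x}$ near $\mathbf{r}'$ along these directions. We therefore get $\textbf{B}^Z=\sum r_i\textbf{B}^Z_i$, and choosing $L_Z=\sum r_iL_{i,Z}$ yields $\textbf{M}_Z=\sum r_i\textbf{M}_{i,Z}$.
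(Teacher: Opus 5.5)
Your strategy is the one behind \cite[Theorem 3.3]{effectiveadjunctionformulawithrealcoefficients}: parametrise the coefficients of $B$ and $M'$ by rational affine functions $\alpha_m(\mathbf{x})$, find a uniform glc neighbourhood of $\mathbf{r}'$, and take barycentric coordinates of a rational simplex around $\mathbf{r}'$. The paper gives no proof of this lemma; it only appeals to \cite[Remark 3.5]{effectiveadjunctionformulawithrealcoefficients}.

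Much of your bookkeeping is fine:
\begin{itemize}
\item Step 1 is correct: $V=\bR^s$, because $\mathbf{r}'$ lies in no proper rational affine subspace.
\item Barycentric coordinates with respect to a rational $s$-simplex are automatically $\bQ$-linearly independent, so no merging is needed.
\item Integrality, nefness of $M_i'$, and $K_X+B_i+M_i\sim_{\bQ}0/Z$ follow as you say.
\end{itemize}
Your sketch of the uniform glc neighbourhood is not a valid argument as written. Along the segments $[\mathbf{r}',\mathbf{x}_n]$ the thresholds need not tend to $1$, and the $\alpha_m(\mathbf{x}_n)$ with positive slope decrease, so no DCC set appears. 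However, the statement itself is the known uniform rational polytope theorem for g-pairs and can simply be quoted. The $\Nklt$ claim needs no ACC. A log discrepancy is a rational affine function of $\mathbf{x}$; if it is $\ge 0$ on the glc region and vanishes at an interior point (where you place the $\mathbf{x}_i$), it vanishes identically, in particular at $\mathbf{r}'$.

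The genuine gap is the last bullet. For a prime divisor $D$ over $Z$, $t_D(\mathbf{x})$ is the minimum over all places $E$ over $D$ of rational affine functions $\tau_E(\mathbf{x})$. Hence $\mathbf{x}\mapsto\textbf{B}^Z(\mathbf{x})$ is merely convex, and you only get $\textbf{B}^Z\le\sum r_i\textbf{B}^Z_i$. Equality needs the minimiser at $\mathbf{r}'$ to remain a minimiser on the whole simplex, for every fibration in the class and all (infinitely many) $D$ over $Z$. Your sentence gives this only on a neighbourhood depending on $X$ and $D$.

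The glc neighbourhood of Step 2 does not control this. It concerns $(X,B(\mathbf{x})+M(\mathbf{x}))$, not $(X,B(\mathbf{x})+tf^*D+M(\mathbf{x}))$, whose vertical coefficients lie in no finite set. In fact glc-ness of the given pair on a simplex does not suffice. Take $\Phi=\{r,1-r\}$, $X=\bP^1\times\bP^1\to Z=\bP^1$ with coordinates $(s,w)$, $C_1=\{w^2=s\}$, $C_2=\{(w-1)^2=s\}$, and $B(x)=xC_1+(1-x)C_2$. Then $(X,B(x))$ is lc and $K_X+B(x)\sim_{\bR}0/Z$ for all $x\in[0,1]$. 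Yet over $z=\{s=0\}$ one gets $t_z(x)=\min\{1,\tfrac32-x,\tfrac12+x\}$, i.e.\ $b_z(x)=|x-\tfrac12|$, which has a kink at $\tfrac12$.

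What is missing is a proof that $(X,B(\mathbf{x})+\tau_D(\mathbf{x})f^*D+M(\mathbf{x}))$ is glc over $\eta_D$ on a neighbourhood independent of $X$ and $D$, where $\tau_D$ is the affine function computing $t_D(\mathbf{r}')$. One way to supply it:
\begin{enumerate}
\item Pass to a model of $Z$ on which $D$ is a divisor, and cut down to a curve through a general point of $D$.
\item As in Steps 2--3 of the proof of Lemma \ref{curve case}, pass to a crepant model $(V,\Gamma_V+M_V)$ of $(X,B+t_Df^*D+M)$ on which every component over $D$ has coefficient $1$ and all other coefficients are $\alpha_m(\mathbf{r}')$ or $1$.
\item Crepancy is a system of rational affine identities in $\mathbf{x}$ that hold at $\mathbf{r}'$, hence hold identically. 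So $(V,\Gamma_V(\mathbf{x})+M_V(\mathbf{x}))$ is crepant to $(X,B(\mathbf{x})+\tau_D(\mathbf{x})f^*D+M(\mathbf{x}))$ for all $\mathbf{x}$.
\item The uniform glc polytope for the finite family $\{\alpha_m\}\cup\{1\}$ then gives $t_D=\tau_D$ uniformly near $\mathbf{r}'$. Hence $\textbf{B}^Z=\sum r_i\textbf{B}^Z_i$ and $\textbf{M}_Z=\sum r_i\textbf{M}_{i,Z}$.
\end{enumerate}
In the example above with $r>\tfrac12$, this auxiliary pair is lc only for $x\ge\tfrac12$, which is exactly what keeps the simplex off the kink.
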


\section{DCC of Iitaka volume of usual pairs with real coefficients}
In this section we prove Theorem \ref{MAIN THM1}. The key result here is the following Theorem \ref{effective adjunction formula}, where we extend the effective adjunction formula \cite[Lemma 7.4]{BVGP} to the case of real coefficients. 

\subsection{Effective canonical bundle formula with real coefficients}

\begin{thm}\label{effective adjunction formula}
	Let $d\in \bN$, $u\in \bR^{>0}$ and $\Phi\subset \bR^{\geq0}$ be a DCC set. Then there exists a finite set $\Psi \subset \bR^{\geq0}$ depending only on  $d,u, \Phi$ satisfying the following. Assume that
	\begin{itemize}
		\item $(X,B)$ is  a projective lc pair of dimension $d$ and $B\in \Phi$,
		\item $f: X \to Z$ is a contraction with $K_X+B\sim_{\bR} 0/Z$,
		\item $A\in \Phi$ is an effective $\bR$-divisor on $X$ such that over the generic point $\eta_Z$ of $Z$: $(X,B+tA)$ is lc for some $t>0$ and $A$ is relatively semi-ample, and
		\item $\vol(A|_F)=u$ for the general fiber $F$ of $f$. 
	\end{itemize} 
	
	Then there is an adjunction formula 
	\begin{equation*}
		K_X+B \sim_{\bR} f^*(K_Z+B_Z+M_Z)=f^*(\sum_{i=1}^lr_i(K_Z+B_{i,Z}+M_{i,Z}))
	\end{equation*}
	such that $M_{\ZZ}=\sum_{i=1}^l r_iM_{i,\ZZ}$ on some high resolution $\ZZ\to Z$, where $r_i\in \Psi$ and  $M_{i,\ZZ}$ is Cartier nef for any $i$. 
\end{thm}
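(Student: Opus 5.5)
The plan is to reduce the DCC case to the finite-coefficient case, apply the uniform decomposition of Lemma \ref{Decomposition of usual div}, and then invoke Birkar's $\bQ$-coefficient effective adjunction \cite[Lemma 7.4]{BVGP} to each rational piece.

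\emph{Step 1: reduce to finitely many coefficients.} Only the horizontal part of $B$ matters for the moduli part. On the general fibre $F$, the pair $(F,B_F)$ is lc with $K_F+B_F\sim_{\bR}0$, and $A|_F$ is semi-ample with $\vol(A|_F)=u$. I would show that the coefficients of $B^h$ lie in a finite set $\Phi_0\subset\Phi$ depending only on $d,u,\Phi$. The argument is by contradiction using the ACC for numerically trivial pairs (global ACC, \cite{ACCLCT}): the fibres $(F,B_F)$ form a set with $B_F\in\Phi$ and $K_F+B_F\equiv 0$, so the coefficients lie in a finite set. This does not even need the boundedness coming from $A$.

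For the vertical part $B^v$, I would modify it without changing $\textbf{M}$. Replace $B$ by $B^h+\Delta^v$, where $\Delta^v$ is chosen over a suitable birational model so that the pair is still lc and $\sim_{\bR}0/Z$. The natural choice is to take a log resolution and a toroidal/equidimensional model in the sense of Abramovich--Karu, put coefficient $1$ on vertical divisors over the discriminant, and run an MMP over $Z$ using Theorem \ref{g-pair special MMP}-type results (the usual pair version \cite{HX13}). This produces a pair $(X'',B'')$ crepant over the generic point of $Z$ to $(X,B)$, with $B''\in\Phi_0\cup\{1\}$, $K_{X''}+B''\sim_{\bR}0/Z$, and the same moduli b-divisor. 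The moduli part depends only on the generic fibre up to the choice of $L_Z$, so it is unchanged. The divisor $A$ is transported as its strict transform; over $\eta_Z$ nothing changes, so $\vol(A|_F)=u$ and the lc condition on $B+tA$ are preserved.

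\emph{Step 2: decompose and apply the $\bQ$-case.} Apply Lemma \ref{Decomposition of usual div} to $(X'',B'')$ with the finite set $\Phi_0\cup\{1\}$. This gives $K+B''=\sum r_i(K+B_i)$ with the following properties:
\begin{itemize}
\item $r_i$ lies in a finite set $\Psi$;
\item $q(K+B_i)$ is integral;
\item $K+B_i\sim_{\bQ}0/Z$;
\item $\textbf{M}=\sum r_i\textbf{M}_i$.
\end{itemize}
Each $(X'',B_i)$ is lc, and its coefficients lie in the finite set $\frac1q\bZ\cap[0,1]$. Over $\eta_Z$ the divisor $A$ is still semi-ample with the same volume. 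I also need $(X'',B_i+t'A)$ to be lc for some $t'>0$ near the generic fibre, which is where the condition $\Nklt(X,B_i)=\Nklt(X,B)$ is used: it allows shrinking $t$. Then \cite[Lemma 7.4]{BVGP} applies to each $(X'',B_i)$ with fixed data $d,u,\frac1q\bZ$. It yields a bounded $p$ such that $pM_{i,Z'}$ is nef Cartier on a high resolution, and this resolution can be taken common to all $i$. Finally, absorb $1/p$ into the coefficients: set $\Psi'=\{r/p\}$ and replace $M_{i,Z'}$ by $pM_{i,Z'}$. The discriminants satisfy $B_Z=\sum r_iB_{i,Z}$ because $\textbf{M}=\sum r_i\textbf{M}_i$ and $L_Z$ is linear.

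\emph{Main obstacle.} I expect Step 1, specifically the vertical modification, to be the hardest part. It needs two things: a model where $\textbf{M}$ is preserved and the coefficients are finite, and control of the auxiliary $A$ with its lc threshold condition. If the MMP route is too delicate, I would try a fallback: apply \cite[Theorem 3.3]{effectiveadjunctionformulawithrealcoefficients} only over the generic point, i.e. decompose $(F,B_F)$. The moduli b-divisor of an lc-trivial fibration is determined by the generic fibre together with a discriminant, so decomposing $B^h$ alone and choosing the vertical parts $B_i^v$ to keep $K+B_i\sim_{\bQ}0/Z$ after a suitable base change may suffice.
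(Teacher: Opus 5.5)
Your route is the paper's. Global ACC makes the horizontal coefficients finite, and an MMP on a model with coefficient one on the vertical components over the discriminant locus gives finite coefficients without changing the moduli b-divisor. Then come Lemma~\ref{Decomposition of usual div} and Birkar's effective adjunction on each rational piece, with $1/p$ absorbed into $\Psi$; this is Lemma~\ref{finite effective adjunction} together with Steps 1--4 of the paper's proof.

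The step that does not go through as cited is the appeal to \cite[Lemma 7.4]{BVGP}. That lemma is stated for an \emph{integral} divisor $A$, so in particular $u$ is rational. Here $A$ is an $\bR$-divisor with coefficients in the DCC set $\Phi$, and $u\in\bR^{>0}$. Passing to the pieces $(X'',B_i)$ only makes the coefficients of $B_i$ rational; it does nothing to $A$. There is also no direct way to trade $A$ for an integral divisor that stays relatively semi-ample with a fixed fibre volume. The paper closes this by citing the real-coefficient version \cite[Lemma 3.1]{Zhu25}. Otherwise you must redo Birkar's argument for $\bR$-divisors $A$:
\begin{enumerate}
\item get a uniform $\lambda$ with $(F,B_{i,F}+\lambda A_F)$ lc from \cite[Theorem 6.4]{GOPV}, which allows $\bR$-divisors whose coefficients are bounded below;
\item get boundedness of $(F,\Supp(B_{i,F}+A_F))$ from the fixed volume $\lambda^{\dim F}u$ and DCC coefficients;
\item deduce $q(K_F+B_{i,F})\sim 0$ from \cite[Lemma 7.2]{BVGP};
\item bound the fibre multiplicities in the curve case using DCC of volumes of fibre components.
\end{enumerate}

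For the vertical modification you flag as the main obstacle, the toroidal model is unnecessary, but your sketch is missing two points. First, run the MMP over a resolution $\ZZ\to Z$ on which the locus $L$ containing the images of all vertical components is a reduced, hence Cartier, divisor. Over $\UU=\ZZ\setminus L$ the MMP terminates by \cite{Bir12}, since $(X,B)$ is a weak lc model there. Second, the coefficient-one vertical components are lc centres disjoint from the preimage of $\UU$, so an \cite{HX13}-type result cannot be applied to $K+\Gamma$ directly. The fix is to note that $F_W$, the pullback of $L$, satisfies $F_W\sim 0/\ZZ$. Hence the MMP is also one for $K+\Gamma-aF_W$, all of whose non-klt centres meet the preimage of $\UU$, and \cite{Hashizume19} then gives a good minimal model. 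The output fibres over a model $Z''$ birational to $\ZZ$, not over $Z$, so your claim $K_{X''}+B''\sim_{\bR}0/Z$ should read $/Z''$. The moduli b-divisor is unchanged because $\Gamma_V-B_V$ is the pullback of a divisor on $Z''$ by \cite[Lemma 2.11]{effectiveadjunctionconjecture}. This is the precise form of your remark that $\textbf{M}$ depends only on the generic fibre.
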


First, we show that, at least in the case of finite $\bR$-divisors, Theorem \ref{effective adjunction formula} holds.

\begin{lem}\label{finite effective adjunction}
    Same notation as in Theorem \ref{effective adjunction formula}. Assume further that $B\in \Phi'\subset\Phi$ is a finite set. Then Theorem \ref{effective adjunction formula} holds.
\end{lem}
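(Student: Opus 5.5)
The plan is to combine the uniform rational decomposition of Lemma \ref{Decomposition of usual div} with Birkar's effective adjunction for $\bQ$-coefficients, \cite[Lemma 7.4]{BVGP}. Since $B\in \Phi'$ with $\Phi'$ finite, Lemma \ref{Decomposition of usual div} gives a finite set $\Psi_0$ and an integer $q_0$, depending only on $d,\Phi'$, and a decomposition $K_X+B=\sum_{i=1}^l r_i(K_X+B_i)$. Here $r_i\in\Psi_0$ are $\bQ$-linearly independent with $\sum r_i=1$, each $(X,B_i)$ is lc, $K_X+B_i\sim_\bQ 0/Z$, $q_0B_i$ is integral (so $B_i\in\frac{1}{q_0}\bZ\cap[0,1]$, a finite set), and $\mathbf{M}=\sum r_i\mathbf{M}_i$ at the level of b-divisors. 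Note that $l\le |\Psi_0|$ is bounded, since the $r_i$ are distinct.

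Next I would check that each $(X,B_i)$, together with the same $A$, satisfies the hypotheses of \cite[Lemma 7.4]{BVGP} (the $\bQ$-version). The key point is that, over $\eta_Z$, $(X,B_i+t'A)$ is lc for some $t'>0$.

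\begin{itemize}
\item I expect this to follow from how the decomposition is constructed. The $B_i$ are obtained by perturbing the coefficients of $B$ inside a rational polytope containing $B$, with the same support, and the lc property is an open condition on that polytope.
\item If the polytope is chosen to contain $B+tA$ near $B$ (for instance by working with the finite coefficient set $\Phi'\cup\{$coefficients of $tA\}$ generically over $Z$), then lc-ness of $B_i+t'A$ over $\eta_Z$ follows.
\item Alternatively, one can use $\Nklt(X,B_i)=\Nklt(X,B)$ together with $\Supp B_i\subseteq\Supp B$. Over $\eta_Z$ the divisor $A$ contains no lc center of $(X,B)$, and by the equality of non-klt loci it then contains none of $(X,B_i)$. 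This gives some $t'>0$, possibly depending on the pair.
\end{itemize}

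Uniformity of $t'$ is not needed for Birkar's lemma as stated, since only existence is required. The coefficients of $A$ lie in $\Phi$, which is DCC, and $\vol(A|_F)=u$ is unchanged.

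Applying \cite[Lemma 7.4]{BVGP} to each $(X,B_i)$ with DCC set $\Phi\cup\frac{1}{q_0}\bZ$ produces integers $p,q$ depending only on $d,u,\Phi'$ and $\Phi$ with $q(K_X+B_i)\sim qf^*(K_Z+B_{i,Z}+M_{i,Z})$ and $pM_{i,\ZZ}$ nef Cartier on a sufficiently high resolution $\ZZ$. We may take $\ZZ$ common to all $i$, since $l$ is finite and the moduli b-divisors descend.

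Setting $M_{i,\ZZ}^{\mathrm{new}}=pM_{i,\ZZ}$ and $r_i^{\mathrm{new}}=r_i/p$, we get $M_\ZZ=\sum r_iM_{i,\ZZ}$ with $r_i^{\mathrm{new}}\in \Psi:=\frac1p\Psi_0$, which is finite. Summing the formulas with weights $r_i$ gives $K_X+B\sim_\bR f^*(K_Z+B_Z+M_Z)$ with $B_Z=\sum r_iB_{i,Z}$. This last identity holds because the discriminant is computed via lc thresholds: linearity of the discriminant along the decomposition is part of the construction in \cite{effectiveadjunctionformulawithrealcoefficients} (equivalently, from $\mathbf{M}=\sum r_i\mathbf{M}_i$ and $\sum r_i=1$, since $K_Z+B_Z+M_Z$ is determined by $K_X+B$).

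The main obstacle is verifying the condition that $(X,B_i+tA)$ is lc over $\eta_Z$ for the rational pieces. If needed I would rerun Lemma \ref{Decomposition of usual div} on the boundary $B+\frac t2 A$ restricted over the generic point, or invoke the $\Nklt$ equality as above.
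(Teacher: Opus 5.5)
Your architecture matches the paper's: decompose $K_X+B$ with Lemma \ref{Decomposition of usual div}, get an effective adjunction for each rational piece $(X,B_i)$, then rescale by $p$ and sum. The gap is the single black box you use for the pieces.

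\cite[Lemma 7.4]{BVGP} is a $\bQ$-coefficient statement. In it, $A$ is an \emph{integral} divisor, ample over an open subset of $Z$, with $u\in\bQ$. Here $A$ is only an effective $\bR$-divisor with coefficients in the DCC set $\Phi\subset\bR^{\geq 0}$, possibly irrational. It is merely semi-ample over $\eta_Z$, and $u\in\bR^{>0}$. Placing the coefficients of $B_i$ in $\frac{1}{q_0}\bZ\cap[0,1]$ does not touch $A$. So that citation justifies neither the bounded $q$ with $q(K_X+B_i)\sim qf^*(K_Z+B_{i,Z}+M_{i,Z})$ nor the bounded $p$ with $pM_{i,\ZZ}$ Cartier.

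The paper supplies both separately. For $q$ it works on a general fibre:
\begin{enumerate}
\item \cite[Theorem 6.4]{GOPV} gives a uniform $\lambda$ with $(F,B_{i,F}+\lambda A_F)$ lc.
\item The lc model $F'$ of $K_F+B_{i,F}+\lambda A_F$ has volume $\lambda^{\dim F}u$. Hence $(F',\Supp(B'_{i,F}+A'_F))$ is bounded by \cite[Theorem 1.1]{HMX18}, extended to real coefficients via \cite[Theorem 1.4]{BVGP}, which makes the coefficients finite.
\item \cite[Lemma 7.2]{BVGP} then gives $q(K_F+B_{i,F})\sim 0$ for a bounded $q$.
\item \cite[Lemma 2.5]{CHL23} descends the vertical divisor $q(K_X+B_i)+\Div(\alpha_i)$ to $Z$.
\end{enumerate}
For $p$ it uses \cite[Lemma 7.4]{BVGP} only when $A$ is a $\bZ$-divisor, and \cite[Lemma 3.1]{Zhu25} when $A$ has DCC real coefficients. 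Your argument needs one of these inputs, or an explicit reduction from $A$ to an integral relatively ample divisor of controlled volume, which you do not give.

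On the point you flag as the main obstacle, the paper simply asserts that $(F,B_{i,F}+tA_F)$ is lc for some $t>0$, so you are not behind it there. However, your third argument does not work as stated. The equality $\Nklt(X,B_i)=\Nklt(X,B)$ does not prevent $(X,B_i)$ from having new lc centres inside $\Nklt(X,B)$. Take $\mathbb{A}^2$ with $S=\{y=0\}$, $C=\{y=x^2\}$ and $A=\{x=0\}$. The pairs $(\mathbb{A}^2,S+bC)$ with $b<\frac12$ and $(\mathbb{A}^2,S+\frac12 C)$ both have non-klt locus $S$. Yet the origin is an lc centre only of the second. Consequently $(\mathbb{A}^2,S+\frac12 C+tA)$ is not lc for any $t>0$, while $(\mathbb{A}^2,S+bC+tA)$ is lc for small $t$. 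You would have to show, from the construction of the polytope, that the $B_i$ acquire no lc places over $\eta_Z$ beyond those of $B$.
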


\begin{proof}
    First of all, by Lemma \ref{Decomposition of usual div}, there is a finite set $\Psi\subset \bR^{\geq0}$ and a positive integer $q$ such that we have a decomposition
    $$K_X+B=\sum_{i=1}^lr_i(K_X+B_i)$$
    satisfying the conditions stated in Lemma \ref{Decomposition of usual div}. In particular, $K_X+B_i\sim_{\bQ}0/Z$ for any $i$ and $q(K_X+B_i)$ is integral.

    Let $F$ be a general fiber of $f$, then $K_F+B_{i,F}=(K_X+B_i)|_F\sim_{\bQ}0$. Hence $(F,B_{i,F}),A_F$ is a polarized lc Calabi-Yau pair. Moreover, there is a real number $t>0$ such that $(F,B_{i,F}+tA_F)$ is lc. Therefore, by \cite[Theorem 6.4]{GOPV}, there is a uniform $\lambda>0$ such that $(F,B_{i,F}+\lambda A_F)$ is lc. Now $K_F+B_{i,F}+\lambda A_F$ is semi-ample and big, there is a birational morphism $g:F\rightarrow \FF$.
    Let $B_{i,F}'$ and $A'_F$ be the corresponding push down of $B_{i,F}$ and $A_F$. Then by negativity lemma, we have $K_F+B_{i,F}=g^*(K_{F'}+B_{i,F}')$. Moreover, we have $K_F+B_{i,F}+A_F=g^*(K_{F'}+B_{i,F}'+A'_F)$ and $K_{F'}+B_{i,F}'+\lambda A'_F$ is ample.
Then as $\vol(K_{F'}+B_{i,F}'+\lambda A'_F)=\vol(K_F+B_{i,F}+\lambda A_F)= \lambda^{\dim F}u$ is fixed, $(F', \Supp(B_{i,F}'+A_F'))$ belongs to a bounded family by \cite[Theorem 1.1]{HMX18}. Note that in \cite[Theorem 1.1]{HMX18}, the authors only consider the case where the pair is a $\bQ$-divisor, but the result also hold for $\bR$-divisors as well by the following argument. First, as $(F', B_{i,F}'+\lambda A_F')\in \cF_{lc}(\dim F,\Phi\cup\lambda\Phi, \lambda^{\dim F}u)$, by \cite[Theorem 1.4]{BVGP}, we have the coefficients of $B_{i,F}'+\lambda A_F'$ actually belong to a fixed finite set. Then we apply the argument in page 224, subsection 6.8.4 of \cite{familiesofgeneraltype} to show actual boundedness.

    Hence by \cite[Lemma 7.2]{BVGP}, possibly replacing $q$ with a bounded multiple, we can assume that $q(K_{F'}+B_{i,F}')\sim 0$. Thus $q(K_{F}+B_{i,F})\sim 0$ as well. This implies that we can find a rational function $\alpha_i$ on $X$ such that $q(K_X+B_i)+\Div(\alpha_i)$ is vertical over $Z_i$. Since $$q(K_X+B_i)+\Div(\alpha_i)\sim_{\bQ}0/Z,$$ we see that $q(K_X+B_i)+\Div(\alpha_i)$ is the pullback of a $\bQ$-Cartier $\bQ$-divisor $qL_Z$ on $Z$ by \cite[Lemma 2.5]{CHL23}. Thus, we have the following adjunction formula
		\begin{equation*}
			q(K_X+B_i)\sim qf^*(K_Z+B_{i,Z}+M_{i,Z})
		\end{equation*}
		where $B_{i,Z}$ is the discriminant divisor and $M_{i,Z}=L_Z-K_Z-B_{i,Z}$ is the moduli divisor.

    We claim that there is a uniform positive integer $p$ depending only on $d,u,\Phi$ such that $pM_{i,Z'}$ is Cartier nef for some high resolution $\ZZ \rightarrow Z$. When $A$ is a $\bZ$-divisor, this is ensured by \cite[Lemma 7.4]{BVGP}. In general, when $A$ is an effective $\bR$-divisor with DCC coefficients, we can apply \cite[Lemma 3.1]{Zhu25} instead.

    Now we have an adjunction formula
		\begin{equation*}
			 \quad	K_X+B\sim_{\bR}f^*(K_Z+B_Z+M_Z),
		\end{equation*}
		where $B_Z=\sum_{i=1}^l r_iB_{i,Z}$ and $\textbf{M}=\sum_{i=1}^l r_i\textbf{M}_i$. 	 Let $\Psi^{\prime}=\{\frac{r}{p}|r\in J\}$, then $$M_{\ZZ}=\sum_{i=1}^l \frac{r_i}{p}(pM_{i,\ZZ})$$ where $\frac{r_i}{p}\in \Psi^{\prime}$ and $pM_{i,\ZZ}$ is Cartier nef for $1\leq i \leq l$. Replace $\Psi$ by $\Psi'$ and we are done.
		
\end{proof}

\begin{proof}[Proof of Theorem \ref{effective adjunction formula}]

We follow the proof of \cite[Theorem 3.3]{Zhu25} with small changes.
	\begin{enumerate} [label=\textsl{Step} \arabic{enumi}., wide=13pt, itemsep=13pt]
		
		\item Let $F$ be a general fiber of $f:X\to Z$. Then $K_F+B_F:=(K_X+B)|_F\sim_{\bR}0$ and $(F,B_F)$ is a lc log Calabi-Yau pair. By \cite[Theorem 1.5]{ACCLCT}, since the coefficients of $B_F$ are in a DCC set $\Phi$, they are in a finite set $\Psi\subset \bR^{\geq0}$ depending only on $d,\Phi$. Hence if we denote $B^h$ to be the horizontal/$Z$ part of $B$, then $B^h\in \Psi$.		
		
		\item Our main idea is to run some MMP to obtain a new pair that preserves the horizontal coefficients of $B$, but also have good control of the vertical coefficients.
		  
		  Take a sufficiently high log resolutions of $(X,B)$ and $Z$ as follows:
		\begin{displaymath}
			\xymatrix{
				\XX \ar[d]_{\ff} \ar[r]^\pi  & X \ar[d]^f         \\
				\ZZ \ar[r]^\mu  & Z   }
		\end{displaymath}
		such that $(\XX,\Sigma)$ is log smooth, where $\Sigma$ is the sum of reduced $\pi$-exceptional divisors and the birational transform of $\Supp B$. Write $K_{\XX}+\BB=\pi^*(K_X+B)$. Let $\tilde{B}^v,\tilde{B}^h$ be the vertical/$\ZZ$ part and horizontal/$\ZZ$ part of the birational transform of $B$. Let $E^v,E^h$ be the vertical/$\ZZ$ part and horizontal/$\ZZ$ part of the reduced $\pi$-exceptional divisors. Then  we take an open subset $\UU$ in $\ZZ$ such that 
		\begin{itemize}
			\item $\mu:\ZZ\to Z$ is an isomorphism on $\UU$,
			\item $L:=\ZZ\backslash \UU$ is a reduced divisor on $\ZZ$, and
			\item  $\ff(\Supp(\tilde{B^v}+E^v))\subseteq L$.
		\end{itemize}
		
		Let $\ff^{-1}L$ be the reduction of the inverse image of $L$ with respect to $\ff:\XX\to \ZZ$ and  add $\ff^{-1}L$ to $\Sigma$. Possibly replacing $(\XX,\Sigma)$ with a higher birational model, we can assume that the condition $(\XX,\Sigma)$ being log smooth is preserved.
		\item Let $\Gamma^{\prime}=\tilde{B^h}+E^h+\ff^{-1}L$. Replacing $\Psi$ with $\Psi\cup \{1\}$, we have $\Gamma^{\prime}\in \Psi$. Run an MMP on $K_{\XX}+\Gamma^{\prime}$ over $\ZZ$ with scaling of some ample divisor. Since over $\ff^{-1}\UU$, $(X,B)$ is a weak lc model of $(\XX,\Gamma^{\prime})$, hence by \cite[Corollary 3.7]{Bir12}, $(\XX,\Gamma^{\prime})$ has a minimal model over $\UU$. Therefore, by \cite[Theorem 1.9]{Bir12}, the MMP terminates over $\ff^{-1}\UU$ and we reach a model $(W,\Gamma_W)$ such that  $K_W+\Gamma_W\sim_{\bR}0/\UU$.
		
		Now we continue to run the MMP on $K_W+\Gamma_W$ over $\ZZ$. The MMP does not modify $W$ over $\UU$. Moreover, the MMP is also an MMP on $K_W+\Gamma_W-aF_W$ where $F_W$ is the pullback of $L$ with respect to $h:W\to \ZZ$ and $a>0$ is a small number. Note that $K_W+\Gamma_W-aF_W$ is semi-ample over $\UU$ and any non-klt center of $(W,\Gamma_W-aF_W)$ intersects with $h^{-1}U'$. The MMP terminates with a good minimal model $V$ by \cite[Theorem 1.2]{Hashizume19}. Let $g: V\to \ZZZ$ be the contraction induced by the semi-ample/$\ZZ$ $\bR$-divisor $K_V+\Gamma_V$ and denote by $\mu^{\prime}$ the morphism $\ZZZ\to \ZZ$. Here it's important to note that $\ZZZ\to \ZZ$ is birational by construction. If we denote $K_V+B_V$ as the pushdown of $K_{\XX}+\BB$, then $\Supp(\Gamma_V-B_V)$ maps into $L\subseteq \ZZ$. Since $K_{\XX}+\BB\sim_{\bR}0/\ZZ$, by negativity Lemma, the pullbacks of $K_V+B_V$ and $K_X+B$ to a common resolution are the same. Therefore, we conclude that $(V,B_V)$ is a sub-klt pair and $K_V+B_V\sim_{\bR}0/Z$. Let $A_V$ be the birational transform of the horizontal/$Z$ part of $A$. Let $G$ be the general fiber of $g:V\to \ZZZ$. Since over $\UU$, $(V,\Gamma_V)$ is a small $\bQ$-factorialization of $(X,B)$, $A_V$ is the pullback of $A$. Therefore, $A_V$ is relatively semi-ample over the generic point of $\ZZZ$ and $\vol(A_V|_G)=u$.  
		
		\item Applying Lemma \ref{finite effective adjunction} to $(V,\Gamma_V)$ over $\ZZZ$, there exists a finite set $\Psi'\subset \bR^{\geq0}$ depending only on $d,u,\Psi, \Phi$ such that we can write an adjunction formula
		\begin{equation*}
			 \quad K_V+\Gamma_V\sim_{\bR} g^*(K_{\ZZZ}+\Gamma_{\ZZZ}+M_{\ZZZ})
		\end{equation*}
		such that $M_{\ZZZZ}=\sum_{i=1}^l r_iM_{i,\ZZZZ}$ on some high resolution $\ZZZZ\to \ZZZ$, where $r_i\in \Psi'$ and  $M_{i,\ZZZZ}$ is Cartier nef for any $i$.
		
		Since $K_V+\Gamma_V$ is the pullback of  $K_X+B$ over $\UU\subseteq \ZZ$, $\Gamma_V-B_V$ is vertical over $\ZZZ$. Since $$K_V+\Gamma_V\sim_{\bR}K_V+B_V\sim_{\bR}0/\ZZZ,$$ we conclude that $\Gamma_V-B_V$ is the pullback of an effective $\bR$-Cartier $\bR$-divisor $P_{\ZZZ}$ on $\ZZZ$ by \cite[Lemma 2.11]{effectiveadjunctionconjecture}. The adjunction formula above induces an adjunction formula
		\begin{equation*}
			 \quad K_V+B_V\sim_{\bR} g^*(K_{\ZZZ}+B_{\ZZZ}+M_{\ZZZ})
		\end{equation*}
		 where $B_{\ZZZ}:=\Gamma_{\ZZZ}-P_{\ZZZ}$ and the moduli part is preserved. 
		
		Since the pullbacks of $K_V+B_V$ and $K_X+B$ to a common resolution are the same, the above  adjunction formula of $(V,B_V)\to Z''$ induces the following adjunction formula 
		\begin{equation*}
			 \quad K_X+B\sim_{\bR} f^*(K_Z+B_Z+M_Z)
		\end{equation*}
		where $K_Z+B_Z+M_Z$ is the pushdown of $K_{\ZZZ}+B_{\ZZZ}+M_{\ZZZ}$. 
        In particular, since $\ZZZZ\to\ZZZ\to\ZZ\to Z$, after replacing $Z'$ by $Z'''$ and replacing $\Psi$ by $\Psi'$ we see $M_{Z'}=\sum_{i=1}^lr_iM_{i,Z'}$ with $r_i\in \Psi$ and $M_{i,Z'}$ is integral thus Cartier nef for any $i$.
	\end{enumerate}
\end{proof}

\subsection{Proof of Theorem \ref{MAIN THM1}}
\begin{proof}[Proof of Theorem \ref{MAIN THM1}]
    First we treat the lc case. Let $(X,B)\in \cI_{lc}(d,\Phi,u)$ and let $f:X\rightarrow Z$ such that $K_X+B\sim_{\bR}0/Z$. Let $F$ be a general fiber of $f$. By assumption, there is an $\bR$ divisor $0\leq A\in \Phi$ on $X$ such that $\vol(A|_F)=u$ is fixed and over some non-open subset of $Z$: $(X,B+tA)$ is lc for some $t>0$ and $A$ is semi-ample.

    Then by Theorem \ref{effective adjunction formula}, there is a finite set $\Psi\subset \bR^{\geq 0}$ depending only on $d,u,\Phi$ such that we can write an adjunction formula
    $$K_X+B\sim_{\bR}f^*(K_Z+B_Z+\sum_{i=1}^lr_iM_{i,Z})$$
    where $\sum_{i=1}^lr_i=1$ and each $r_i\in \Psi$, and for some sufficiently high resolution $\ZZ\to Z$, $M_{Z'}= \sum_{i=1}^lr_iM_{i,Z}'$ and $M_{i,Z'}$ is Cartier nef for any $i$. Moreover, $(Z, B_Z+M_Z)$ is g-lc and 
    $$\Ivol (K_X+B)=\vol (K_Z+B_Z+M_Z)$$
    as $\kappa(K_X+B)=\dim Z$. By the definition of $B_Z$ and the ACC of lct \cite[Theorem 1.1]{ACCLCT}, the coefficients of $B_Z$ belong to a DCC set $\Psi'$ depending only on $d,\Phi$. Replace $\Psi$ by $\Psi\cup\Psi'$, we have $(Z,B_Z+M_Z)\in \cG_{glc}(\dim Z, \Psi)$. Hence by \cite[Theorem 1.3]{BVGP} we have $\Ivol(K_X+B)=\vol(K_Z+B_Z+M_Z)$ belongs to a DCC set depending only on $\dim Z, \Psi$, which in turn depending only on $d,u,\Phi$.

    Now consider the klt case. One may simply use \cite[Theorem 3.3]{Zhu25} instead of Theorem \ref{effective adjunction formula}, but we provide an alternative approach here, at least when $A$ belongs to a fixed finite set of real numbers. Let $(X,B)\in \cI_{klt}(d,\Phi,\leq u)$, let $f:X\to Z$ such that $K_X+B\sim_{\bR}0/Z$. Let $F$ be a general fiber of $f$. Then $(F, B_F)$ is a klt log Calabi-Yau pair. By assumption, $\vol (A|_F)\leq u$ and $A|_F$ is semi-ample.

    Now $(F,B_F)$ is klt and $K_F+B_F\sim_{\bR}0$ and $B_F\in \Phi$ is a DCC set. By \cite[Lemma 2.48]{BABI}, $(F, B_F)$ is $\epsilon$-lc for some fixed $\epsilon$ depending only on $d,\Phi$. By \cite[Theorem 6.2]{GOPV}, $(F,\Supp(B_F+A_F))$ belongs to a bounded set of couple. In particular, any $\bQ$-Cartier Weil divisor on $F$ has bounded Cartier index by \cite[Lemma 2.24]{BABI}.

    There is a fixed $\lambda >0$ such that $(F,B_F+\lambda A_F)$ is klt. By \cite[Theorem 3.15]{CGD20}, there is a uniform decomposition 
    $$K_F+B_F+\lambda A_F=\sum_{i=1}^lr_i(K_F+C_{i,F})$$
    such that $r_i\in I$ where $I$ is a finite set depending only on $d,\Phi,\lambda$ and $\sum_{i=1}^lr_i=1$. We can assume $K_F+C_{i,F}$ is semi-ample for any $i$. Moreover, there is a positive integer depending only on $d,\Phi,\lambda$ such that $p(K_F+C_{i,F})$ is integral $\bQ$-Cartier for any $i$.

    Now possibly replacing $p$ by a bounded multiple, we assume $p(K_F+C_{i,F})$ is Cartier for any $i$, and we have
    $$(\sum_{i=1}^l\frac{r_i}{p}(p(K_F+C_{i,F})))^d=\vol (\lambda A_F)\leq \lambda^{\dim F}u.$$
    By expanding the intersection number on the left side, we see that the expression is a discrete set in $\bR^{>0}$, which implies $\vol (A|_F)$ takes only finitely many possible values. Thus the DCC of Iitaka volume follows from the DCC of the lc case.
\end{proof}

\section{Several facts about generalised pairs}
In this section we prove some boundedness type results about generalised pairs. These results are of independent interest and will be used to prove Theorem \ref{MAIN THM2} and Corollary \ref{MAIN COR1}. 

\subsection{Divisors with generalised log discrepancy close to zero.}

We first state and prove a g-pair version of \cite[Lemma 2.48]{BABI}.

\begin{lem}\label{g-pair version of 2.48}
   Let $d\in \bN$ and $\Phi \subset \bR^{\geq0}$ a DCC set. Then there is $\epsilon>0$ depending only on $d,\Phi$ such that if $(X,B+M)$ is a projective generalised pair with data $\pi:\XX\to X$ and $M'$ and $D$ is a prime divisor over $X$ satisfying
   \begin{itemize}
       \item $(X,B+M)$ is g-lc of dimension $d$ and $(X,0)$ is klt,
       \item $K_X+B+M\sim_{\bR}0$ and $B\in\Phi$ and $M'=\sum\mu_jM'_j$ with $\mu_j\in\Phi$ and $M'_j$ nef Cartier for any $i$, and
       \item $a(D,X,B+M)<\epsilon$,
   \end{itemize}
   then $a(D,X,B+M)=0$.
\end{lem}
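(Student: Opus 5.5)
We argue by contradiction, following the proof of \cite[Lemma 2.48]{BABI}: extract $D$, run an MMP to make $D$ move, and land in a situation where the ACC for generalised log canonical thresholds (or for numerically trivial g-pairs) is violated. So assume there is a sequence $(X_k,B_k+M_k)$ and prime divisors $D_k$ over $X_k$, satisfying the hypotheses, with $0<a_k:=a(D_k,X_k,B_k+M_k)$ and $a_k\to 0$. After passing to a subsequence we may take $a_k$ strictly decreasing.

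First we extract $D_k$. Since $(X_k,0)$ is klt, and since $(X_k,B_k+M_k)$ is g-lc with $a_k<1$, there is a birational model $Y_k\to X_k$ that extracts exactly $D_k$ (if $D_k$ is exceptional). Such a model exists by running an MMP from a log resolution, which is possible for g-pairs when the underlying variety is of klt type (compare \cite[Lemma 4.4]{effectiveIitaka}). We may also take $Y_k$ to be $\bQ$-factorial. We then write
$$K_{Y_k}+B_{Y_k}+M_{Y_k}=\text{pullback of }K_{X_k}+B_{X_k}+M_{X_k}\sim_{\bR}0,$$
with $B_{Y_k}=\tilde B_k+(1-a_k)D_k$. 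The coefficients of $B_{Y_k}$ lie in $\Phi\cup\{1-a_k\}$.

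Next we set $\Gamma_k:=\tilde B_k+D_k$, which raises the coefficient of $D_k$ to $1$. Then $K_{Y_k}+\Gamma_k+M_{Y_k}\sim_{\bR}a_kD_k$ is pseudo-effective but not numerically trivial. Run the $(K_{Y_k}+\Gamma_k+M_{Y_k})$-MMP, which is a $D_k$-MMP. This MMP never contracts $D_k$. Since $Y_k$ is of klt type, we reach either a model on which $D_k$ is nef, or a Mori fibre space on which $D_k$ is ample over the base. The first case is impossible: $D_k\equiv$ a nonzero effective divisor that is nef would contradict $K+B+M\equiv 0$ with $D_k$ exceptional over $X_k$. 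To see this, perturb by $-a_kD_k$ and apply the negativity lemma on a common model. Hence we get a Mori fibre space $Y'_k\to T_k$ with $D_k$ horizontal.

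Restrict to a general fibre $G_k$ of $Y'_k\to T_k$. On it, $K_{G_k}+B_{G_k}+M_{G_k}\equiv 0$, where $B_{G_k}$ has a component of $D_k|_{G_k}$ with coefficient $1-a_k$, and all other coefficients lie in $\Phi$. The nef part $M'$ restricts to a combination $\sum\mu_jM'_j|$ of nef Cartier divisors with $\mu_j\in\Phi$. Since $D_k$ is ample over $T_k$, the divisor $D_k|_{G_k}\neq 0$. Its coefficient must be at most $1$ on components, and the ACC for generalised lc thresholds applies here. More precisely, we need the global ACC for numerically trivial g-lc pairs, i.e. \cite[Theorem 1.6]{BZ16}, the g-pair version of \cite[Theorem 1.5]{ACCLCT}. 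Applied to a fixed DCC set $\Phi\cup\{1-a_k\}$, it forces the coefficients of such pairs to lie in a finite set. If $\dim G_k=0$ we use instead the divisor $D_k$ itself as a vertical-horizontal count; when $\dim T_k<\dim Y'_k$ the fibre has positive dimension, so this case does not arise. Finiteness contradicts the fact that $1-a_k$ is strictly increasing and takes infinitely many values.

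The main obstacle is the MMP step for g-pairs. We need to extract $D_k$ and run the $D_k$-MMP while ensuring the g-pair framework remains valid: the nef part is preserved on higher models, $Y_k$ stays of klt type, and we land on a Mori fibre space rather than a nef model. To handle this, we would first try to choose a klt boundary $\Delta_k$ on $Y_k$ so that the MMP is a $(K+\Delta_k)$-MMP, making termination with scaling available, and then rule out the nef outcome via the negativity lemma as sketched. A subsidiary point is that the restricted nef part keeps the required form $\sum\mu_jM'_j|_{G'}$ with nef Cartier pieces. This holds because we pass to a general fibre of a high model.
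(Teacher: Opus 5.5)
\textbf{Gap in the MMP step.} The argument fails at the MMP detour. Since $K_{Y_k}+\Gamma_k+M_{Y_k}\sim_{\bR}a_kD_k$ with $D_k\geq 0$, this divisor is pseudo-effective, so the MMP can never end with a Mori fibre space. Moreover, at a Mori fibre space of a $D_k$-MMP it is $-D_k$, not $D_k$, that is ample over the base, and that is absurd for an effective divisor.

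Your claim that the MMP never contracts $D_k$ is also false; the opposite holds. Let $Y_k''$ be the model where the MMP stops and $D_k''$ the pushforward of $D_k$, which is nef there. Take a common resolution $p\colon W\to Y_k$, $q\colon W\to Y_k''$. Then $D_k$-negativity of the steps gives $p^*D_k=q^*D_k''+E$ with $E\geq 0$. Because $D_k$ is exceptional over $X_k$, the nef divisor $q^*D_k''$ pushes forward to $-(\phi\circ p)_*E\leq 0$ on $X_k$. The negativity lemma then gives $q^*D_k''\leq 0$, hence $D_k''=0$. So the ``nef'' outcome you rule out is exactly what happens, with $D_k$ contracted. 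When $X_k$ is $\bQ$-factorial, the very first step is already the divisorial contraction back to $X_k$. Consequently the general fibre $G_k$ on which you invoke the global ACC does not exist.

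\textbf{The fix.} Drop the detour: you already have the contradiction right after extracting $D_k$. The pair $(Y_k,B_{Y_k}+M_{Y_k})$ is a projective g-lc pair of dimension $d$ with:
\begin{itemize}
\item $K_{Y_k}+B_{Y_k}+M_{Y_k}\sim_{\bR}0$;
\item boundary coefficients in the DCC set $\Phi\cup\{1-a_k\mid k\in\bN\}$, which is DCC because $1-a_k$ increases;
\item nef-part coefficients $\mu_j\in\Phi$.
\end{itemize}
The global ACC \cite[Theorem 1.6]{effectiveIitaka} therefore puts all boundary coefficients in a fixed finite set. This is incompatible with $D_k$ carrying the pairwise distinct coefficients $1-a_k>0$. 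This is exactly the paper's proof; when $D_k$ is already a divisor on $X_k$, no extraction is needed.
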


\begin{proof}
    If the lemma doesn't hold, then there is a strictly decreasing sequence $\epsilon_i>0$ and a sequence $(X_i,B_i+M_i), D_i$ as in the statement such that $0<a(D,X,B+M)<\epsilon_i$. Now if $D_i$ is already a divisor on $X_i$, let $\phi_i:X_i'\to X_i$ be the identity morphism. Otherwise, since $(X_i,0)$ is klt, we can find a birational morphism $\phi_i:X'_i\to X_i$ extracting only $D_i$. This is ensured by the existence and termination of generalised klt MMP. More precisely, we can first take a log resolution $g:W\to (X,B+M)$ such that $D$ is a divisor on $W$. Take a positive number $\lambda<1$ such that $(X,\lambda B+\lambda M)$ is g-klt. Then we have 
    $$K_W+E_{\lambda}+\lambda M_W=g^*(K_X+\lambda B+\lambda M)+F_{\lambda},$$
    where $E_{\lambda}$ and $F_{\lambda}$ are effective and have no common components. Take $G$ to be the support of the exceptional locus of $g$. Then there is a sufficiently small $t>0$ such that $(W,E_{\lambda}+t(G-D)+\lambda M_W)$ is g-klt. The $(K_W+E_{\lambda}+t(G-D)+\lambda M_W)$-MMP/$X$ terminates to the required model $X'$ by \cite[Lemma 4.4]{effectiveIitaka}.

    Now back to our situation here. Let $K_{X_i'}+B_i'+M_i'$ be the crepant pullback of $K_{X_i}+B_i+M_i$, and let $b_i= 1-a(D_i,X_i,B_i+M_i)$, which is the coefficient of $D_i$ in $B_i'$. Then $B_i'\in \Phi':=\Phi\cup\{b_i|i\in \bN\}$. We can assume $\Phi'$ is a DCC set. Now by the global ACC theorem for g-pairs \cite[Theorem 1.6]{effectiveIitaka} we get a contradiction, as $\Phi'$ is only DCC but not finite.
\end{proof}

\subsection{Uniform lc threshold for g-lc Calabi-Yau generalised pairs}

We also provide a g-pair version of \cite[Theorem 6.4]{GOPV}.

\begin{thm}\label{g-pair version of 6.4}
   Let $d\in \bN$, $v,\delta\in \bR^{>0}$, and $\Phi\subset \bR^{\geq0}$ be a DCC set. Then there is a positive real number $t$ depending only on $d,v,\delta,\Phi$ satisfying the following. Assume that
   \begin{itemize}
       \item $(X,B+M)$ is a g-lc Calabi-Yau pair of dimension $d$ with data $\pi:\XX\to X$ and $M'$,
       \item $B\in\Phi$ and $M'=\sum\mu_jM'_j$ with $\mu_j\in\Phi$ and $M'_j$ nef Cartier for any $i$,
       \item $N\geq 0$ is a nef and big $\bR$-divisor on $X$ with coefficients $\geq \delta$,
       \item $(X,B+uN+M)$ is g-lc for some $u>0$, and
       \item $\vol (N)\leq v$.
   \end{itemize}
   Then $(X,B+tN+M)$ is g-lc.
\end{thm}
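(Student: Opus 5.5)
We argue by contradiction, following the strategy of \cite[Theorem 6.4]{GOPV} and \cite[Lemma 4.4]{BABI}. Suppose no such $t$ exists. Then we get a sequence $(X_i,B_i+M_i),N_i$ as in the statement together with numbers $t_i\to 0$, where $t_i$ is the generalised lc threshold of $N_i$ with respect to $(X_i,B_i+M_i)$. By assumption $t_i>0$. The aim is to show that the thresholds $t_i$ are controlled by a DCC/ACC-type invariant, which is incompatible with $t_i\to 0$.

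\textbf{Step 1: reduction to a $\bQ$-factorial klt model.} Replace $(X_i,B_i+M_i)$ by a g-dlt modification, and then by a small $\bQ$-factorialisation. This lets us assume $X_i$ is $\bQ$-factorial and that we can run MMPs. Since $K_{X_i}+B_i+M_i\sim_\bR 0$, the pair $(X_i,B_i+t_iN_i+M_i)$ is g-lc but not g-klt along some place $D_i$ with $a(D_i,X_i,B_i+t_iN_i+M_i)=0$. Because $(X_i,B_i+uN_i+M_i)$ is g-lc for some $u>t_i$, the place $D_i$ satisfies $a(D_i,X_i,B_i+M_i)>0$, and in fact $a(D_i,X_i,B_i+M_i)=t_i\mult_{D_i}N_i$. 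Extract $D_i$ as in the proof of Lemma \ref{g-pair version of 2.48}; this needs $(X_i,0)$ klt, or else we perturb using the dlt structure.

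\textbf{Step 2: bound the multiplicity.} The key estimate is an upper bound on $\mult_{D_i}N_i$ in terms of $a(D_i,X_i,B_i+M_i)$. We would imitate \cite[Lemma 4.4]{BABI} as follows:
\begin{enumerate}[label=(\roman*)]
\item Use that $N_i$ is nef and big with $\vol(N_i)\le v$ and coefficients $\ge\delta$.
\item Run an MMP on $-(K_{X_i}+B_i+M_i)$-trivial data twisted by $N_i$, so that $(X_i,\Supp N_i)$ becomes log birationally bounded. Here the coefficient bound $\delta$ and the volume bound $v$ control $\Supp N_i$.
\item Then $N_i\le$ a bounded multiple of a reduced divisor from a bounded family.
\end{enumerate}
On the bounded family, lc thresholds of bounded divisors are bounded below only if the log discrepancies $a(D_i,X_i,B_i+M_i)$ are bounded below. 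So the problem reduces to showing $a(D_i,X_i,B_i+M_i)\ge\epsilon$ for a uniform $\epsilon$.

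\textbf{Step 3: lower bound on log discrepancies.} This is exactly Lemma \ref{g-pair version of 2.48}. Since $a(D_i,X_i,B_i+M_i)>0$, we get $a(D_i)\ge\epsilon(d,\Phi)$. If $D_i$ lies over a component of $\lfloor B_i\rfloor$ or another glc centre, the uniform positivity argument is done instead on a dlt model. There one restricts via adjunction to the lc centre, inducts on dimension, and uses the g-pair adjunction with DCC coefficients.

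\textbf{Main obstacle.} We expect the main difficulty to be Step 2, namely obtaining log birational boundedness of $(X_i,\Supp N_i)$ together with control of the moduli part $M_i$. Unlike in \cite{GOPV}, $M_i$ is not effective, so its support is not bounded. We would try first to replace $M_i'$ by a general effective $\bR$-divisor. This works only up to $\bR$-linear equivalence; it is legitimate because each $M'_j$ is nef Cartier and we may add a small multiple of the big divisor $N_i$. We would then apply Theorem \ref{bdd glct} on the bounded model to get a uniform $t$ with $(X_i,B_i+tN_i+M_i)$ g-lc. This contradicts $t_i\to 0$.
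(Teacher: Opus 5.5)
\textbf{Verdict: genuine gap.} The skeleton matches the paper: a log-discrepancy gap from Lemma \ref{g-pair version of 2.48}, log birational boundedness via \cite[Lemma 4.4]{BABI}, then a uniform coefficient bound for $N$ on a bounded model. But the nef part $M'$, which you flag as the main obstacle, is not handled by your proposed fix, and that step is where the actual proof lives.

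\emph{Your proposed fix for $M'$ fails.} Replacing $M'$ by a general effective divisor is not legitimate.
Each $M'_j$ is only nef. It need not be semi-ample, and it need not even be $\bR$-linearly equivalent to an effective divisor. Adding $\eta N$ makes the class nef and big, but a nef and big class need not be semi-ample. So nothing forces an effective representative to avoid the glc centres of $(X,B+M)$, or to have multiplicities bounded uniformly along your sequence. Moreover, the g-lc condition depends on $M'$ on a high model, not only on the class of $M$ on $X$.

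\emph{Theorem \ref{bdd glct} does not apply.} It requires $(X,B+M)$ to be generalised $\epsilon$-lc. Here the pair is only g-lc and in general has glc places. This is exactly why the paper uses that route only in the g-klt case (Remark \ref{remark about gklt}). Relatedly, your Step 2 tacitly reads the threshold off coefficients on a bounded log smooth model when it bounds $\mult_{D_i}N_i$. That is possible only if the nef part descends to that model.

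\emph{The missing idea: a bounded log smooth model on which $M'$ descends.}
\begin{enumerate}
\item Start from the bounded $(\bar X,\bar\Sigma)$ with $\bar N\le\bar R\le c$.
\item Show that $\bar H-(\bar\Theta+\bar M)$ is big. The paper writes $R\equiv E+pM$ with $E\geq 0$ and uses $\bar M\le p_*q^*M_Y$, which holds by negativity.
\item Apply \cite[Proposition 3.12]{BVGP} to get a log bounded $\hat X$ on which $M'$ descends, then pass to a bounded log resolution $W$.
\item On $W$, $M_W$ does not affect singularities, so g-lc is a coefficient check:
 components with $\mu_DB_W=1$ are glc places, so they are not in $N_W$, because $(X,B+uN+M)$ is g-lc;
 every other component has $\mu_DB_W\le 1-\epsilon$ by the gap;
 and $N_W\le c'$ by the general negativity lemma.
\item Take $t=\epsilon/c'$ and use $\pi^*N\le\rho^*N_W$ to pull the conclusion back to $X$.
\end{enumerate}
This argument is direct. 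It needs no contradiction argument and no induction on lc centres, since the gap lemma already covers every divisor with positive log discrepancy.

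\emph{Other omissions.} Your Step 2 also skips the reduction that makes \cite[Theorem 4.2]{GOPV} (effective birationality of $|mK_X+lN+2pM|$) applicable.
\begin{itemize}
\item[(a)] Pass to a g-dlt model and use global ACC, so that $pB$ is integral and $pM'$ is Cartier.
\item[(b)] Extract all divisors with $a(D,X,0)<\epsilon$. These are glc places by the gap, so the pullback of $N$ is just its birational transform. This makes $X$ itself $\epsilon$-lc.
\item[(c)] Use that $N-K_X\equiv N+B+M$ is big.
\end{itemize}
Finally, real coefficients need a separate argument; the paper handles them through the uniform decomposition of Lemma \ref{Decomposition of g-div}.
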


\begin{proof}
    \begin{enumerate} [label=\textsl{Step} \arabic{enumi}.,wide=13pt,itemsep=13pt]
    \item We first consider the case where $\Phi\subset \bQ^{\geq 0}$ is a DCC set of rational numbers. In this step we make some basic reductions. Let $(X'',B''+M'')$ be a $\bQ$-factorial g-dlt model of $(X,B+M)$ and let $N''$ be the pullback of $N$. By construction, any exceptional prime divisor over $X$ has coefficient equals to $1$ in $B''$. Now since $(X'',B''+uN''+M'')$ is g-lc for some $u>0$, we see that $\Supp N''$ doesn't contain any exceptional divisor over $X$ so that $N''$ is just the birational transform of $N$, thus $N''\geq \delta$ still holds. Replacing $(X,B+M),N$ with $(X'',B''+M''),N''$ we can assume $(X,0)$ is $\bQ$-factorial klt.

    Now since $K_X+B+M\sim_{\bR}0$, $B\in \Phi$ and $\mu_j\in\Phi$ where $\Phi$ is a DCC set, there is a real number $\epsilon >0$ depending only on $d,\Phi$ such that if $D$ is a prime divisor over $X$ with $a(D,X,B+M)<\epsilon$, then $a(D,X,B+M)=0$, by Lemma \ref{g-pair version of 2.48}. In particular, if $a(D,X,0)<\epsilon$, then $a(D,X,B+M)=0$. Moreover, by global ACC of g-pairs \cite[Theorem 1.6]{effectiveIitaka}, we see the coefficients of $B$ belong to a finite set, and those $\mu_j$ where $M_j'$ is not numerically trivial are finite. Since those $M'_j\equiv 0$ will not contribute to the singularities of $(X,B+M)$, so we may simply ignore such $M_j'$'s and assume there is a positive integer $p$ depending only on $d,\Phi$ such that $pB$ is integral, and $pM'$ is Cartier, as we have assumed $\Phi$ to be a DCC set of rational numbers. Note that after discarding all $M'_j\equiv 0$, we may only have $K_X+B+M\equiv 0$, but this is sufficient for the rest of the proof.

    Let $X'''\to X$ to extract exactly all prime divisors $D$ over $X$ with $a(D,X,0)<\epsilon$. Such extraction exists as $(X,0)$ is klt. By construction we can see $(X''',0)$ is $\epsilon$-lc, and if $N'''$ is the pullback of $N$, it is just the birational transform of $N$. Replacing $(X,B+M),N$ by $(X''',B'''+M'''),N'''$ we may assume $(X,0)$ is $\epsilon$-lc.

    \item In this step we first show birationally boundedness of $(X,\Supp(B+N))$. Since $X$ is $\epsilon$-lc and $N\geq \delta$ is nef and big, and $N-K_X\equiv N+B+M$ is big, by \cite[Theorem 4.2]{GOPV}, there exists $m,l\in \bN$ depending only on $d,\epsilon, \delta$ such that $|mK_X+lN+2pM|$ defines a birational map. Take $L\geq 0$ to be an element in the linear system $|mK_X+lN+2pM|$, and after replacing $l$ by a bounded multiple, we may assume $L\geq N+\Supp N$. Let $R:=L+m\Delta$ where $\Delta$ is a small $\bR$-divisor such that $R$ is a $\bQ$-divisor, and $(X,\Delta)$ is $\frac{\epsilon}{2}$-lc.

    Regard $(X,\Delta+\frac{1}{m}(lN+2pM))$ as a g-pair with nef part $\frac{1}{m}(lN+2pM)$, running an MMP on $K_X+\Delta+\frac{1}{m}(lN+2pM)\sim_{\bQ}\frac{1}{m}R$ ends with a minimal model $Y$, as $R$ is big. Now on $Y$, $R_Y$ is a nef and big $\bQ$-divisor. Moreover, $R_Y-(K_Y+B_Y)\equiv R_Y+M_Y$ is big, as $M_Y$ is at least pseudo-effective. Note that

    $\vol(R_Y)=\vol(R)=\vol(mK_X+m\Delta+lN+2pM)=\vol(-mB-(m-2p)M+m\Delta + lN)\leq \vol ((m+l)N)\leq (m+l)^dv.$

    We claim that $\mu_D(B_Y+R_Y)\geq1$ holds for any component $D$ of $R_Y$. This is obvious when $D$ is not a component of the fractional part of $R_Y$. And for any component $D$ of $R_Y$ that is a component of the fractional part of $R_Y$,
    $$\mu_D(B_Y+R_Y)\geq \mu_D(R_Y)\geq \mu_D(L_Y)\geq\mu_D(\Supp N_Y)=1.$$

    Therefore by applying \cite[Lemma 4.4]{BABI} to $(Y,B_Y),R_Y$, there is a real number $c>0$ and a bounded set of couples $\cP$ depending only on $d,v,\Phi,m,l$ such that there is a projective log smooth couple $(\bar{X},\bar{\Sigma})\in\cP$ and a birational map $\bar{X}\dashrightarrow Y$ such that
    \begin{itemize}
        \item $\Supp\bar{\Sigma}$ contains the exceptional divisors of $\bar{X}\dashrightarrow Y$ and the birational transform of $\Supp(B_Y+R_Y)$, and
        \item $p: X''''\to \bar{X}$ and $q:X'''' \to Y$ is a common resolution and $\bar{R}:=p_{*}q^*R_Y\leq c$.
    \end{itemize}

    Note that $\bar{\Sigma}$ contains the exceptional divisor of the induced map $\bar{X}\dashrightarrow X$ and the birational transform of $\Supp(B+N)$. After replacing $\pi:\XX\to X$ by a further resolution, we may assume $p: X'\to \bar{X}$ and $q:X' \to Y$ is a common resolution. Since $N$ is nef, by negativity we have
    $$\bar{N}:=p_*\pi^*N\leq p_*q^*N_Y\leq p_*q^*R_Y =\bar{R}\leq c.$$
    Hence $\bar{N}$ is supported in $\bar{\Sigma}$ and $\bar{N}\leq c$. Moreover, as $(\bar{X},\bar{\Sigma})$ belongs to a bounded set, there is a very ample divisor $\bar{H}$ on $\bar{X}$ such that $\bar{H}^d$ is bounded from above, say $r$, and $\bar{H}-\bar{\Sigma}$, $\bar{H}-\bar{R}$ and $\bar{H}-\bar{N}$ are ample.

    \item Now we take the nef part into consideration. Let $K_{X'}+B'+M'=\pi^*(K_X+B+M)$, $N'=\pi^*N$, and let $K_{\bar{X}}+\bar{B}+\bar{M}=p_*\pi^*(K_X+B+M)$, where $\bar{M}=p_*M'$. By negativity, we have $K_{X'}+B'+M'=p^*(K_{\bar{X}}+\bar{B}+\bar{M})$. So $(X', B'+M')$ and $(\bar{X},\bar{B}+\bar{M)}$ are crepant models.
    
   Our goal here is to find another suitable model where $M'$ descends. First we define a klt pair on $\bar{X}$ as follows. Let $\bar{\Theta}:=(1-\frac{1}{p})\bar{\Sigma}$  . Clearly $(\bar{X},\bar{\Theta})$ is log smooth klt. Moreover, $(\bar{X}, \bar{\Theta}+\bar{M})$ is a generalised pair of dimension $d$ with data $\pi: \XX\to X$ and $M'$, where $p\bar{\Theta}$ is integral, and $pM'$ is Cartier. On the other hand, $\bar{H}-\bar{\Theta}$ is big as $\bar{\Theta}\leq \bar{\Sigma}$. And since $$R=L+m\Delta\equiv mK_X+m\Delta +lN+2pM$$ and perhaps after replacing $m, l$ by a bounded multiple, we may assume $mK_X+m\Delta+lN+pM$ is also big, thus there is an effective $\bR$-divisor $E\geq 0$ such that $R\equiv E+pM$. Let $E_Y$ be the pushdown of $E$ to $Y$ and we have $R_Y\equiv E_Y+pM_Y$. Let $\bar{E}=p_*q^*E_Y$. Then $$\bar{H}-p_*q^*{pM_Y}\equiv\bar{H}-\bar{R}+\bar{E}$$ is big. Since $\bar{M}\leq p_*q^*{M_Y}$ because $q^*M_Y\geq M'$ by negativity, we see $\bar{H}-p\bar{M}$ is big as well. Therefore, replacing $\bar{H}$ by a bounded multiple, we can assume $\bar{H}-(\bar{\Theta}+\bar{M})$ is big.

   \item Now by \cite[Proposition 3.12]{BVGP}, we deduce that there is a generalised klt pair $(\hat{X},\hat{\Sigma}+\hat{M})$ with data $\rho:\XX\to \hat{X}$ and $M'$ such that
   \begin{itemize}
       \item $h:\hat{X}\dashrightarrow \bar{X}$ is a birational morphism,
       \item $\hat{\Sigma}:=\Supp\hat{\Theta}$ contains the support of the birational transform of $\bar{\Theta}$ and the reduced exceptional divisor of $h$,
       \item $(\hat{X},\hat{\Sigma})$ belongs to a bounded set of couples, and
       \item $M'$ descends to $\hat{X}$, that is, $M'=\rho^*\rho_*M'$.
   \end{itemize}
   
   From construction, it's easy to see that $\hat{\Sigma}$ contains the exceptional divisors of $\hat{X}\dashrightarrow X$ and the birational transform of $\Supp (B+N)$. $(\hat{X},\hat{\Sigma})$ may not be log smooth, but there is a bounded set of couples $\cQ$ such that we have a log resolution $W\to \hat{X}$ with $(W,\Sigma_W)\in \cQ$, where $\Sigma_W$ is the sum of the birational transform of $\hat{\Sigma}$ and the reduced exceptional divisor of $W\to \hat{X}$. Clearly $M'$ also descends to $W$, and we may replace $(\hat{X},\hat{\Sigma})$ by $(W,\Sigma_W)$.

   \item We argue that it suffice to find $t$ on the new model $W$. Let $N_W:=\rho_*N'$ where $\rho: X'\to W$ is a morphism. We claim that the coefficients of $N_W$ is also bounded from above. Indeed, as pointed in the proof of \cite[Proposition 3.7]{BVGP}, we can find a very ample divisor $H_W$ on $W$ such that $H_W^d\leq r'$ is bounded, and $H_W-h^*\bar{H}$ is big. Consider the pullback $h^*N$ of $N$ by $h:W\rightarrow \bar{X}$, as $\bar{H}-\bar{N}$ is ample, we see $$H_W-h^*\bar{N}=H_W-h^*\bar{H} +h^*(\bar{H}-\bar{N})$$ is big, thus the coefficients of $h^*\bar{N}$ is also bounded from above, say $c'$. Note that $h_*N_W=\bar{N}$ and $N_W$ is the pushdown of a nef divisor on $X'$, which is movable, by the general negativity lemma \cite[Lemma 3.3]{Bir12}, we have $N_W\leq h^*\bar{N}\leq c'$, this proves our claim.

   We have $$K_{X'}+B'+M'=\rho^*(K_W+B_W+M_W),$$ where $K_W+B_W+M_W=\rho_*(K_{X'}+B'+M')$. It's easy to see that $(W, B_W+M_W)$ is a log smooth g-lc pair, and the nef part $M_W$ will not contribute to the singularities. Now since $(X,B+uN+M)$ is g-lc for some $u>0$, $(X',B'+uN'+M')$ is sub-glc, so no component of $N'$ has coefficient $1$ in $B'$. Thus no component of $N_W$ has coefficient $1$ in $B_W$. Note that if $\mu_DB_W<1$, then $\mu_DB_W\leq 1-\epsilon$ by Step 1. Now take a fixed $t>0$ such that $c't\leq \epsilon$. Then for any prime divisor $D$ that $\mu_DB_W<1$, $$\mu_D(B_W+tN_W)\leq 1-\epsilon+tc'\leq 1.$$ Moreover, since $\Supp{\Sigma_W}$ contains $\Supp B_W\cup\Supp N_W$, we see $(W,\Supp B_W\cup \Supp N_W) $ is log smooth. Thus $(W, B_W+tN_W+M_W)$ is sub-g-lc.

   Now on $X'$ we have $$\pi^*(K_X+B+tN+M)\leq \rho ^*(K_W+B_W+tN_W +M_W)$$ by negativity, we deduce that $(X, B+tN+M)$ is lc. Note that $t$ depends only on $d, v, \Phi$ indeed.

   \item We now give a short explanation about the general $\bR$-divisor case. As in Step 1, we can reduce to the case where the coefficients of $B$ and the $\mu_j$'s belong to a fixed finite set $\Lambda$, then by Lemma \ref{Decomposition of g-div}, there is a finite set $\Psi$ depending only on $d,\Lambda$ such that we have a uniform decomposition 
   $$ K_X+B+M=\sum_{i=1}^lr_i(K_X+B_i+M_i),
   $$
   such that $\sum_{i=1}^lr_i=1$ and $K_X+B_i+M_i\sim_{\bQ}0$, and there is a bounded $q\in \bN$ such that $q(K_X+B_i+M_i)$ is integral and $qM_i'$ is Cartier for any $i$. Moreover, $\Nklt(X, B_i+M_i)=\Nklt (X,B+M)$. Thus we can apply the above result for $\bQ$-divisors to find a bounded $t>0$ such that $(X,B_i+tN+M_i)$ is g-lc for any $i$. Then as
   $$ K_X+B+tN+M= \sum_{i=1}^lr_i(K_X+B_i+tN+M_i),$$
   we conclude that $(X,B+tN+M)$ is g-lc as well.
    \end{enumerate}
\end{proof}

\begin{remark}\label{remark about gklt}
    We provide another approach when $(X, B+M)$ is already g-klt, which is much easier. First, there is an $\epsilon>0$ such that $(X,B+M)$ is g-$\epsilon$-lc. Step 1 and Step 2 are the same as in the proof of Theorem \ref{g-pair version of 6.4}. We then find a uniform $t>0$ on $\bar{X}$ such that $(\bar{X},\bar{B}+t\bar{N}+\bar{M})$ is sub-g-klt, without constructing another model where $M'$ descends. Indeed, $(\bar{X},\bar{\Sigma})$ is a bounded set of couples, and there is a very ample divisor $\bar{H}$ with bounded $\bar{H}^d\leq r$, and after replacing $\bar{H}$ by a bounded multiple we may assume $\bar{H}-\bar{B}-\bar{N}-\bar{M}$ is big. Then we may apply Theorem \ref{bdd glct} to find such a $t$. Moreover, we remark that in this case we can take a uniform $t>0$ such that $(X,B+tN+M)$ is g-$\frac{\epsilon}{2}$-lc.
\end{remark}

\section{DCC of Iitaka volume of generalised pairs}
In this section we prove Theorem \ref{MAIN THM2}, Corollary \ref{MAIN COR1} Corollary \ref{MAIN COR2} and Theorem \ref{MAINTHMnew}. As in the usual pair case, the key step is to establish an effective adjunction formula for g-lc trivial fibrations.

\subsection{Effective generalised canonical bundle formula}

\begin{thm}\label{g-pair effective canonical bundle formula}
    Let $d,q \in \mathbb{N}$, $\Phi \subset \mathbb{Q}^{\geq 0}$ a DCC set, and $u \in \mathbb{Q}^{\geq 0}$. Then there exists $p\in \bN$ depending only on $d,q,\Phi,u$ satisfying the following. Assume that
    \begin{itemize}    
    \item $(X, B+M)$ is projective generalised lc of dimension $d$ with data $\phi:X'\rightarrow X$ and $M'$, \item the coefficients of $B$ are in $\Phi$,    
    \item $qM'$ is nef Cartier,    
    \item $f: X \to Z$ is a contraction with $K_X + B +M \sim_{\mathbb{Q}} 0/Z$,         
    \item we have a generalised adjunction formula    $$q(K_X+B+M)\sim qf^*(K_Z+B_Z+M_Z) ,$$    
    \item there is an effective $\bQ$-divisor $A \geq 0$ on $X$ such that $A\in \Phi$ and over some non-empty open subset of $Z$: $(X, B + tA +M)$ is generalised lc for some $t > 0$ and $A$ is semi-ample, and   
    \item $0<\operatorname{vol}(A|_F) \leq u$ for the general fibres $F$ of $f$.
    \end{itemize}
    Then $pM_{Z'}$ is Cartier nef on some high resolution $\ZZ\to Z$.
\end{thm}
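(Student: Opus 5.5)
The plan is to follow the strategy of \cite[Lemma 7.4]{BVGP}: reduce to a bounded family of fibres, then control the moduli part via a finite cover that trivialises the torsion. The new ingredient needed for generalised pairs is supplied by Theorem \ref{g-pair version of 6.4} together with the assumed linear equivalence $q(K_X+B+M)\sim qf^*(K_Z+B_Z+M_Z)$.

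\emph{Step 1: bounding the general fibre.} Let $F$ be a general fibre of $f$ and restrict: $K_F+B_F+M_F\sim_{\bQ}0$. Then $(F,B_F+M_F)$ is a g-lc Calabi--Yau pair with $B_F\in\Phi$ and $qM'_F$ nef Cartier. By assumption, $(F,B_F+tA_F+M_F)$ is g-lc for some $t>0$. After passing to the ample model of $A_F$ over $F$ (replacing $F$ by a crepant model), we may assume $A_F$ is nef and big with coefficients $\geq\delta:=\min(\Phi\setminus\{0\})$ and $\vol(A_F)\le u$. Theorem \ref{g-pair version of 6.4} then gives a uniform $\lambda>0$ such that $(F,B_F+\lambda A_F+M_F)$ is g-lc. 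Consequently $K_F+B_F+\lambda A_F+M_F\sim_{\bQ}\lambda A_F$ is semi-ample and big, with volume $\lambda^{\dim F}\vol(A_F)\le\lambda^{\dim F}u$.

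Next pass to its ample model $F'$. By global ACC, the coefficients of $B_F$ and $A_F$ lie in a finite set. Applying Theorem \ref{bdd gpair} (together with the $\bQ$-coefficient boundedness results of \cite{BVGP}), the pairs $(F',B_{F'}+\lambda A_{F'}+M_{F'})$ form a bounded family of g-lc pairs. I expect to need the g-lc rather than the g-klt version of this boundedness. If that is not directly available, I would pass to a g-dlt model and perturb to a g-klt pair, as in the proof of Theorem \ref{g-pair version of 6.4}.

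\emph{Step 2: comparing with a bounded base change.} Following \cite[Lemma 7.4]{BVGP}, I would use the boundedness of fibres to show that the relevant variation of the family is controlled. Concretely, I would take a finite Galois base change $\tilde Z\to Z$ of bounded degree, after which $M_Z$ becomes linearly equivalent to a multiple of a moduli-type Cartier divisor in a bounded family. In the usual-pair case the key input is an integer $p_0$ with $p_0(K_F+B_F)\sim0$, and this is exactly what fails for generalised pairs. Here it is provided by the hypothesis: restricting $q(K_X+B+M)\sim qf^*(\cdots)$ to $F$ gives $q(K_F+B_F+M_F)\sim0$.

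With this in hand, I would construct the index-one cover associated to $q(K_X+B+M)-qf^*(K_Z+B_Z+M_Z)\sim 0$. Following Ambro and Fujino--Gongyo, $qM_{Z'}$ is then expressed via the Hodge-theoretic or period-map description. Because the fibres vary in a bounded family, this exhibits $M_{Z'}$ as the pullback of an ample $\bQ$-divisor on a bounded moduli-type space, with bounded Cartier index on a sufficiently high resolution. Nefness of $M_{Z'}$ follows from the generalised canonical bundle formula, namely b-nefness of the moduli part, as in Lemma \ref{adjunction formula for R coefficients}.

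\emph{Main obstacle.} The hard step is the Cartier index bound. For generalised pairs, the moduli part $\mathbf{M}^Z$ receives a contribution from $M'$, which is not Hodge-theoretic. My first attempt would be to absorb $M'$ into a boundary: since $qM'$ is nef Cartier over a bounded fibre family, choose a general member $G'\in|kqM'+\text{ample}|$ over $Z$, perturbed so that it is semiample over $Z$. This reduces to a usual lc-trivial fibration $(X,B+\tfrac1{kq}G)$ whose coefficients lie in a fixed finite set, with the torsion index still $\le kq$. One can then apply \cite[Lemma 7.4]{BVGP} or \cite[Lemma 3.1]{Zhu25} to get $p$. If the needed semiampleness over $Z$ fails, I would instead work on a model where $M'$ descends, using Theorem \ref{g-pair special MMP} to produce a good minimal model, and repeat the construction there.
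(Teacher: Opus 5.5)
\paragraph{Main gap: Step 2 does not go through.}
Step 2 carries the whole content of the theorem. The index-one cover and period-map description of the moduli part (Ambro, Fujino--Mori, Fujino--Gongyo) only sees the Hodge-theoretic part of $\mathbf{M}^Z$. Your device for absorbing $M'$ into the boundary fails, for the following reasons.
\begin{itemize}
\item If $G'\in|kqM'+H|$ with $H$ ample, then $K_X+B+\frac{1}{kq}G\sim_{\bQ}K_X+B+M+\frac{1}{kq}H$. This is no longer $\sim_{\bQ}0/Z$, so $(X,B+\frac{1}{kq}G)\to Z$ is not an lc-trivial fibration, and its moduli part has nothing to do with $M_Z$.
\item Avoiding the ample twist requires $M'$ to be semi-ample over $Z$ with bounded index. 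Nefness does not give this, even fibrewise. For example, take $-K_Y$ for $Y$ the blow-up of $\bP^2$ at nine very general points. It is nef Cartier but not semi-ample, and it is a legitimate nef part for $(\bP^2,0+M)$ with $K+M\sim 0$. Taking products with $Z$ gives the same phenomenon over $Z$.
\item Your fallback via Theorem \ref{g-pair special MMP} makes $K_X+B+M$ semi-ample over $Z$, not $M'$, so it does not help.
\item Even for usual pairs, the claim that $M_{Z'}$ is the pullback of an ample $\bQ$-divisor on a bounded moduli-type space with bounded Cartier index is not an available result; it is close to the open b-semi-ampleness conjecture.
\end{itemize}
Only the nefness part of your argument is fine. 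Step 1, with its need for a g-lc version of Theorem \ref{bdd gpair}, turns out to be unnecessary.

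\paragraph{The missing idea: the multiplicity argument.}
What is missing is the multiplicity argument of \cite[Lemma 7.3]{BVGP}, which the paper carries over to g-pairs in Lemma \ref{curve case}. Crucially, it uses the hypothesis $q(K_X+B+M)\sim qf^*(K_Z+B_Z+M_Z)$ for integrality along the special fibres, not merely to trivialise torsion on $F$.
\begin{enumerate}
\item Run MMPs over $X$ and then over a resolution of $Z$ (via Theorem \ref{g-pair special MMP}); these preserve the moduli part. Then cut by general hyperplane sections to reduce to $\dim Z=1$.
\item On the curve, pass to a $\bQ$-factorial g-dlt model on which $A$ is semi-ample and big over $Z$ and every fibre over a bad point $z$ lies in $\rounddown{B}$, so $\mu_zB_Z=1$.
\item Write $f^*z=\sum m_iF_i$. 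The linear equivalence then forces $qm_i\,\mu_zM_Z\in\bZ$ for each $i$.
\item To bound some $m_i$, take a component $F_1$ not contracted by the contraction defined by $A$ over $Z$. Adjunction makes $(F_1,B_{F_1}+M_{F_1})$ a g-lc Calabi--Yau pair polarised by $A|_{F_1}$, with volume at most $u$.
\item Theorem \ref{g-pair version of 6.4} gives a uniform $\lambda$. Then \cite[Theorem 1.3]{BVGP} (DCC of g-lc volumes) gives a uniform $c>0$ with $\vol((K_X+B+\lambda A+M)|_{F_1})\ge c$.
\item Comparing with $\sum m_i\vol((K_X+B+\lambda A+M)|_{F_i})\le\lambda^{d-1}u$ gives $m_1\le\lambda^{d-1}u/c$. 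Hence $p=q\roundup{\lambda^{d-1}u/c}!$ works.
\end{enumerate}
This handles $M'$ entirely through g-pair tools, with no boundedness of $F$, no base change and no Hodge theory.
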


We also provide another slightly different version below, where we allow $M'=\sum \mu_j M'_j$ has DCC coefficients $\mu_j$.

\begin{prop}\label{another versiom}
Let $d,q \in \mathbb{N}$, $\Phi \subset \mathbb{Q}^{\geq 0}$ a DCC set, and $u \in \mathbb{Q}^{\geq 0}$. Then there exists $p\in \bN$ depending only on $d,q,\Phi,u$ satisfying the following. Assume that    
\begin{itemize}        
\item $(X, B+M)$ is projective generalised lc of dimension $d$ with data $\phi:X'\rightarrow X$ and $M'$, 
\item the coefficients of $B$ are in $\Phi$,        
\item $M'=\sum \mu_j M_j'$ where $M_j'$ are nef Cartier and $\mu_j\in \Phi$,      
\item $f: X \to Z$ is a contraction with $K_X + B +M \sim_{\mathbb{Q}} 0/Z$, 
\item none of the $M'_j$ is numerically trivial on the general fiber $F$ of $f$,
\item we have a generalised adjunction formula    
$$q(K_X+B+M)\sim qf^*(K_Z+B_Z+M_Z) ,$$        
\item there is an effective $\bQ$-divisor $A \geq 0$ on $X$ such that $A\in \Phi$ and over some non-empty open subset of $Z$: $(X, B + tA +M)$ is generalised lc for some $t > 0$ and $A$ is semi-ample, and       
\item $0<\operatorname{vol}(A|_F) \leq u$ for the general fibres $F$ of $f$.    
\end{itemize}    

Then $pM_{Z'}$ is Cartier nef on some high resolution $\ZZ\to Z$.    
\end{prop}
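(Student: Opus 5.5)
The plan is to reduce Proposition \ref{another versiom} to Theorem \ref{g-pair effective canonical bundle formula}. The only difference between the two statements is that the nef part is now $M'=\sum\mu_jM'_j$ with DCC coefficients $\mu_j\in\Phi$, rather than a single divisor with $qM'$ nef Cartier. So the main task is to show that, under the extra assumption that no $M'_j$ is numerically trivial on the general fibre $F$, the $\mu_j$ and the number of summands range over a finite set determined by $d,\Phi,u$. Once this holds, there is a bounded $q'$ with $q'M'$ nef Cartier, and the theorem applies with $q$ replaced by $\mathrm{lcm}(q,q')$. Note that the hypothesis $q(K_X+B+M)\sim qf^*(K_Z+B_Z+M_Z)$ is preserved when $q$ is replaced by a multiple.

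\textbf{Step 1: finiteness on the general fibre.} Restrict to a general fibre $F$. We get a g-lc pair $(F,B_F+M_F)$ with nef part $M'|_{F'}=\sum\mu_j M'_j|_{F'}$ and $K_F+B_F+M_F\sim_{\bQ}0$. The coefficients of $B_F$ lie in $\Phi$, and $\mu_j\in\Phi$. By hypothesis, each $M'_j|_{F'}$ is nef Cartier and not numerically trivial. I then invoke the global ACC for generalised pairs \cite[Theorem 1.6]{effectiveIitaka}, exactly as in Step 1 of the proof of Theorem \ref{g-pair version of 6.4}. It says that the coefficients of $B_F$ (hence of the horizontal part of $B$) and those $\mu_j$ whose $M'_j$ is not numerically trivial on $F$ lie in a finite set $\Lambda\subset\Phi$ depending only on $\dim F$ and $\Phi$. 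By the new hypothesis, this covers every $\mu_j$.

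\textbf{Step 2: bounding the number of summands.} Since $\min\Lambda>0$ (discard the $\mu_j=0$), the number of $j$ must be bounded. To see this, use $A$: over the generic point $A|_F$ is semi-ample with $0<\vol(A|_F)\leq u$, and $(F,B_F+tA_F+M_F)$ is g-lc. Theorem \ref{g-pair version of 6.4} (possibly after passing to the ample model of $A_F$ as in Lemma \ref{finite effective adjunction}, so that $A_F$ becomes nef and big with coefficients $\geq\min\Phi$) gives a uniform $\lambda$ for which $(F,B_F+\lambda A_F+M_F)$ is g-lc. Then $K_F+B_F+\lambda A_F+M_F\equiv\lambda A_F$ has volume at most $\lambda^{\dim F}u$.

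With $F$ in a bounded family, or at least birationally bounded as in Step 2 of that proof, each $M'_j\cdot H^{\dim F-1}$ is a positive integer for a suitable bounded polarisation. The sum $\sum\mu_j M'_j\cdot H^{\dim F-1}$ is bounded by an intersection against $-(K_F+B_F)$ plus a bounded quantity, so the number of summands is bounded. Grouping equal coefficients, $M'=\sum_{\mu\in\Lambda}\mu\,(\sum_{\mu_j=\mu}M'_j)$, so in fact only the finiteness of $\Lambda$ is needed, not a bound on the count. Take $q'$ to be a common denominator of the elements of $\Lambda$; then $q'M'$ is nef Cartier.

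\textbf{Step 3: conclusion.} Apply Theorem \ref{g-pair effective canonical bundle formula} with $q\cdot q'$ and $\Phi$ to obtain the bounded $p$.

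The main obstacle is Step 1. Global ACC is formulated for projective g-pairs with $K+B+M\equiv0$. One has to check that the fibre restriction keeps the nef part in the required form, with each $M'_j|_{F'}$ nef Cartier and non-trivial, and that ACC gives finiteness of the $\mu_j$ themselves rather than only of the $B$-coefficients. I would follow the argument of \cite[Theorem 1.6]{effectiveIitaka}: if infinitely many distinct $\mu_j$ occurred, then increasing one coefficient slightly along a non-numerically-trivial $M'_j$ would violate numerical triviality.
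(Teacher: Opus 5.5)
Your proposal is correct and is essentially the paper's own proof. You restrict to the general fibre and apply the global ACC \cite[Theorem 1.6]{effectiveIitaka}, whose statement already gives finiteness of the $\mu_j$ with $M'_j$ not numerically trivial, not only of the coefficients of $B$. You then replace $q$ by a bounded multiple so that $qM'$ is nef Cartier and apply Theorem \ref{g-pair effective canonical bundle formula}. Your Step 2, bounding the number of summands via Theorem \ref{g-pair version of 6.4}, is unnecessary, as you note yourself: grouping equal coefficients already suffices.
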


We first prove the case where $Z$ is a curve.

\begin{lem}\label{curve case}
   Let $d,q \in \mathbb{N}$, $\Phi\subset \bQ^{\geq 0}$ a DCC set, $u \in \mathbb{Q}^{>0}$. Then there exists $p\in \bN$ depending only on $d,q,\Phi,u$ satisfying the following. Assume that    
   \begin{itemize}        
   \item $(X, B+M)$ is projective generalised lc of dimension $d$ with data $\phi:X'\rightarrow X$ and $M'$,  \item $qM'$ is nef Cartier,        
   \item $f: X \to Z$ is a contraction onto a curve $Z$ with $K_X + B +M \sim_{\mathbb{Q}} 0/Z$,             
   \item we have a generalised adjunction formula    
   $$q(K_X+B+M)\sim qf^*(K_Z+B_Z+M_Z) ,$$        
   \item there is an effective $\bQ$-divisor $A\in \Phi$ on $X$ such that over some non-empty open subset of $Z$: $(X, B + tA +M)$ is generalised lc for some $t > 0$ and $A$ is semi-ample, and       
   \item $0<\operatorname{vol}(A|_F) \leq u$ for the general fibres $F$ of $f$.    
   \end{itemize}    
   Then $pM_{Z}$ is integral.
\end{lem}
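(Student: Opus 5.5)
Since $Z$ is a curve, $M_Z$ is a $\bQ$-divisor on a smooth curve and $M_Z = L_Z - K_Z - B_Z$, where $qf^*L_Z \sim q(K_X+B+M)$. Because $qL_Z$ may be taken integral, $qK_Z$ is integral, and $M_Z$ is only defined up to $\bQ$-linear equivalence, proving that $pM_Z$ is integral for a bounded $p$ reduces to showing that the discriminant $B_Z$ has coefficients in a fixed finite set of rationals with bounded denominators. Each coefficient is $b_D = 1 - t_D$, where $t_D$ is the g-lc threshold of $f^*D$ over the generic point of $D$. The plan is therefore to control $t_D$ uniformly: show that every $t_D$ lies in $\frac{1}{p_0}\bZ$ for some $p_0$ depending only on $d,q,\Phi,u$, and then take $p = q p_0$.

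\textbf{Step 1: set up a g-lc model near $D$.} Fix a closed point $z \in Z$ with $b_z \neq 0$ and set $t = t_z$. Then $(X, B + tf^*z + M)$ is g-lc over a neighbourhood of $z$ and has a g-lc center mapping to $z$. Take a $\bQ$-factorial g-dlt model over this neighbourhood and extract a place $E$ with generalised log discrepancy $0$ over $z$. Write $f^*z = \sum m_i F_i$ on this model; the threshold satisfies $t = (1-\mu_{E}B')/\mu_E(f^*z)$. The numerator lies in a set determined by $B\in\Phi$ and $qM'$ Cartier. What must be bounded is the denominator, namely the multiplicity $m_E$ of $f^*z$ along $E$.

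\textbf{Step 2: bound the multiplicities.} This is where the hypotheses on $A$ enter. Over the generic point, $(F, B_F + M_F)$ is a g-lc Calabi--Yau pair, $A_F$ is semi-ample and big on the image of $F$, $\vol(A_F)\le u$, and $(F,B_F+tA_F+M_F)$ is g-lc for some $t>0$. Theorem \ref{g-pair version of 6.4} then gives a uniform $\lambda>0$ with $(F, B_F+\lambda A_F+M_F)$ g-lc. Following \cite[Lemma 7.4]{BVGP}, I would next show that the general fiber, together with $\Supp(B_F+A_F)$, is bounded; this uses Theorem \ref{bdd gpair} applied to the ample model of $K_F+B_F+\lambda A_F+M_F$. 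The boundedness should also bound the Cartier index of $A$ and of $q(K_X+B+M)$ near $F$. The intended argument is as follows.
\begin{itemize}
\item The hypothesis $q(K_X+B+M)\sim qf^*(K_Z+B_Z+M_Z)$ says that the local monodromy and torsion data are trivialised by $q$.
\item By semistable reduction, there is a base change $Z_1\to Z$ of degree $n$, ramified at $z$, after which the family becomes lc with reduced fiber over the preimage of $z$.
\item On $Z_1$ the discriminant coefficient becomes $1-n(1-b_z)$, and the moduli part is unchanged, since it is a b-divisor compatible with base change.
\end{itemize}
Integrality of $qM_{Z_1}$ then forces $n(1-b_z)\in\frac1q\bZ$. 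The remaining task is to bound $n$.

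\textbf{Step 3 (main obstacle): bounding the degree $n$.} I expect bounding $n$ uniformly to be the hard part. My first attempt would use the boundedness of $(F,B_F,A_F)$ from Step 2: the monodromy of the family acts on a bounded polarized g-pair, so it has bounded order through the finiteness of the automorphism component preserving the polarization, and this bounds $n$. Failing that, I would try the approach of \cite[Lemma 7.4]{BVGP}, where the fiber multiplicities are bounded via intersection with $A$, using $A\cdot F_i$ and $\vol(A|_F)\le u$ together with $A\in\Phi$ DCC. Once the coefficients of $B_Z$ lie in $\frac{1}{p_0}\bZ$, integrality of $pM_Z$ follows from the linear equivalence in the adjunction formula.
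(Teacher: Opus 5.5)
There is a genuine gap, and it sits in your opening reduction rather than in the step you flag as the obstacle. Your intermediate claim that every $t_D$ lies in $\frac{1}{p_0}\bZ$ is false under the hypotheses. Take $X=E\times\bP^1$ with $E$ an elliptic curve, $M'=0$, $A=\{e\}\times\bP^1$ and $B=(1-\frac1n)\,E\times\{z_0\}$. The lemma puts no condition on $B$, and even $B\in\Phi$ would allow this for $\Phi=\{1-\frac1n\}\cup\{1\}$. All hypotheses hold with $q=u=1$ and $M_Z=0$, yet $B_Z=(1-\frac1n)z_0$. In the other direction, $qL_Z$ need not be integral. The adjunction formula only makes $qf^*L_Z$ integral (and only where $qB$ is), and a multiple fibre forces a fractional coefficient. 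For an elliptic surface with $B=0$, $M=0$ and a double fibre $f^*z_1=2G$, one has $K_X\sim f^*(K_Z+L)+G$. Hence any $L_Z$ with $K_X\sim f^*L_Z$ has degree in $\frac12+\bZ$, and $M_Z=L$ is integral only because the $\frac12$ cancels against $B_Z=\frac12 z_1$. So the denominators of $M_Z=L_Z-K_Z-B_Z$ cannot be controlled by controlling $L_Z$ and $B_Z$ separately. Your base-change step has the same defect and is also circular: integrality of $qM_{Z_1}$ is essentially the statement being proved.

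The missing idea is to modify the boundary so that the discriminant coefficient becomes $1$ at the bad points while the moduli part is unchanged. The paper takes a log resolution and puts the exceptional divisors and the reduced fibres over $Z\setminus U$ into the boundary with coefficient $1$. It then runs an MMP over $X$ and then over $Z$ (terminating by \cite[Theorem 1.3]{LX23}). This reaches a $\bQ$-factorial g-dlt model with $K_X+B+M\sim_\bQ 0/Z$, $A$ big over $Z$, $qB$ integral and $f^{-1}(Z\setminus U)\subseteq\rounddown{B}$. The new discriminant differs from the old one only over $Z\setminus U$, and the moduli part is the same. Now $\mu_zB_Z=1$, so the adjunction formula makes $qf^*M_Z$ integral near $z$. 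That is, $qm_i\mu_zM_Z\in\bZ$ for every component of $f^*z=\sum m_iF_i$, so it suffices to bound a single $m_i$. Bounding it via $A\cdot F_i$ alone, as in your fallback, does not work, since $\vol(A|_{F_i})$ has no a priori lower bound. Instead, take $F_1\subseteq\rounddown{B}$ not contracted by the contraction defined by $A$ over $Z$, and use adjunction to get a g-lc Calabi--Yau pair on $F_1$. Theorem \ref{g-pair version of 6.4} then gives a uniform $\lambda$, and DCC of volumes \cite[Theorem 1.3]{BVGP} gives $\vol((K_X+B+\lambda A+M)|_{F_1})\ge c$, so that $cm_1\le\lambda^{d-1}u$.
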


\begin{proof}

\begin{enumerate} [label=\textsl{Step} \arabic{enumi}.,wide=13pt,itemsep=13pt]
  \item We follow the proof of \cite[Lemma 7.3]{BVGP}. Fix a closed point $z_0\in Z$, it suffice to prove that $\mu_zpM_Z$ is integral for any $z\notin U$, for some $p$ depending only on $d,q,c,u$. Note that as $q(K_F+B_F+M_F)\sim 0$ on the general fiber $F$ of $f$, after shrinking $U$ again we can assume that over $U$, $qB$ is integral without vertical components.

  Let $W\to X$ be a log resolution of $(X,\Supp(B+A+f^{-1}(Z\setminus U)))$ where $M'$ descends to $W$. Let $B_W$ be the sum of the reduced exceptional divisor of $W\to X$ plus the birational transform of the horizontal/$Z$ part of $B$ plus the birational transform of the reduction of the fibres of $f$ over $Z\setminus U$. Let $A_W$ be the birational transform of the horizontal/$Z$ part of $A$. It's easy to see that any component of $A$ is not a component of $\rounddown{B_W}$, thus $(W,B_W+tA_W+M_W)$ is g-lc for any sufficiently small $t>0$.

  \item First, we run an MMP on $K_W+B_W+tA_W+M_W$ over $X$ with the scaling of an ample divisor for some small $t>0$. By construction, over $f^{-1} U$, $B_W+tA_W$ is the reduced sum of the exceptional divisor of $W\to X$ and the birational transform of $B+tA$. Then as $(X,B+tA)$ is lc at least on $f^{-1} U$, the MMP terminates over $U$, and we reach a model $Y$ such that $(Y,B_Y+tA_Y+M_Y)$ is a $\bQ$-factorial g-dlt model of $(X,B+tA+M)$ over $U$. We may assume $A$ does not contain any generalised non-klt center of $(X,B+tA+M)$ over $U$, after decreasing $t$ if needed. Then over $U$, $A_Y$ coincides with the pullback of $A$ and $(Y,B_Y)+M_Y$ is a $\bQ$-factorial g-dlt model of $(X,B+M)$. In particular, over $U$, $K_Y+B_Y+M_Y\sim_{\bQ}0$ and $K_Y+B_Y+tA_Y+M_Y$ is semi-ample.

  Next, we continue running an MMP on $K_Y+B_Y+tA_Y+M_Y$ over $Z$ with the scaling of an ample divisor. The MMP is trivial over $U$. Moreover, the MMP is also an MMP on $K_Y+B_Y+tA_Y-aF_Y+M_Y$ where $F_Y$ is the sum of the fibres of $g:Y\to Z$ over the points in $Z\setminus U$, and $a>0$ is sufficiently small. We claim that this MMP terminates to a good minimal model, say $V$. Indeed, $K_Y+B_Y+tA_Y-aF_Y+M_Y$ is semi-ample over $U$ by the previous paragraph, and any generalised non-klt center of $(Y, B_Y+tA_Y-aF_Y+M_Y)$ intersects $g^{-1}U$, since $g^{-1}(Z\setminus U)=\Supp F_Y$ and if some generalised non-klt center of $(Y, B_Y+tA_Y-aF_Y+M_Y)$ is contained in $\Supp F_Y$, there is a generalised non-klt center of $(W,B_W+tA_W-aF_W+M_W)$ contained in $\Supp F_W$, which is impossible. Now by \cite[Theorem 1.3]{LX23} which is a g-pair version of \cite[Theorem 1.1]{HX13}, the MMP terminates to a good minimal model.

  \item Decreasing $t$ if necessary and running the MMP as the previous step, we may assume that $K_V+B_V+sA_V+M_V$-MMP with scaling of an ample divisor does not contract any divisor, for any $0<s<t$. Thus $K_V+B_V+M_V$ is a limit of movable/$Z$ divisors. In particular, $K_V+B_V+M_V$ is pseudo-effective over $Z$. We claim that $K_V+B_V+M_V\sim_\bQ 0/Z$. Since $K_V+B_V+M_V\sim_\bQ 0$ over $U$, we have $K_V+B_V+M_V\sim_\bQ P_Z/Z$, where $P_Z$ is a vertical over $Z$ divisor. After adding some negative $\bQ$-linear combination of the fibres $V\to Z$, we may assume $P_V\leq 0$ and that $\Supp P_V$ doesn't contain the support of any fibres of $V\to Z$. Now $P_V|_D$ is pseudo-effective for any component $D$ of a fibre of $V\to Z$, but $P_V\leq 0$. This implies $P_V=0$ otherwise we may take $D$ to be a component of a fibre that intersects $P_V$ but not contained in $P_V$, which yields a contradiction. Thus $K_V+B_V+M_V\sim_\bQ 0/Z$ and $A_V$ is semi-ample and big over $Z$.

  Now let $h:V\to Z$ be the corresponding contraction. Let $p:N\to X$ and $p':N\to V$ be a common resolution. By construction we see $$L:=p'^*(K_V+B_V+M_V)-p^*(K_X+B+M)$$
  is zero over $U$, and $L\sim_\bQ 0/Z$ is the pullback of a $\bQ$-divisor $P_Z$ supported in $Z\setminus U$. Consider the generalised adjunction formula for $h:(V, B_V+M_V)\to Z$
  $$q(K_V+B_V+M_V)=q(p'_*(p^*(K_X+B+M)+L))\sim qh^*(K_Z+B_Z+P_Z+M_Z).$$
  By construction, it's easy to see the discriminant part of $(V,B_Z+M_V)$ is precisely $B_Z+P_Z$, and the moduli part is $M_Z$, which is preserved. Therefore, we may replace $(X, B+M), A$ with $(V,B_V+M_V), A_V$ so that we can assume $(X, B+tA+M)$ is $\bQ$-factorial g-dlt for some $t$, $A$ is semi-ample and big over $Z$, $qB$ is integral and $\rounddown{B}$ contains $f^{-1}(Z\setminus U)$. In particular, $\mu_z B_Z=1$ for any $z\in Z\setminus U$.

  \item Now let $X\to T/Z$ be the contraction defined by $A$ over $Z$. Let $S$ be a vertical/$Z$ component of $\rounddown{B}$ that is not contracted over $T$. Then $A_S:=A|_S$ is a well-defined nef and big $\bQ$-divisor and $\vol(A_S)\leq \vol(A|_F)\leq u$ where $F$ is a general fiber of $f$. Consider the adjunction on $S$:
  $$K_S+B_S+M_S:=(K_X+B+M)|_S\sim_{\bQ}0,$$
  we have $(S,B_S+M_S)$ is a generalised log Calabi-Yau pair, with coefficients of $B_S$ belong to a fixed DCC set by \cite[Proposition 4.9]{effectiveIitaka}, which is actually a finite set by \cite[Theorem 1.6]{effectiveIitaka}. On the other hand, $qM_{S'}'$ is still nef Cartier, where $S'\to S$ is a high resolution.

  Note that $(S,B_S+tA_S+M_S)$ is g-lc for some $t>0$. Therefore, by Theorem \ref{g-pair version of 6.4}, there is a uniform rational number $\lambda>0$ depending only on $d,q,\Phi,u$ such that $(S, B_S+\lambda A_S +M_S)$ is g-lc. Moreover, the coefficients of $B_S+\lambda A_S$ belong to a fixed DCC set, so by \cite[Theorem 1.3]{BVGP}, the volumes $\vol (K_S+B_S+\lambda A_S+M_S)$ belong to a DCC set, thus is bounded from below away from $0$, say $\vol (K_S+B_S+\lambda A_S+M_S) \geq c$ for some $c\in \bQ^{>0}$ depending only on $d,q,\Phi,u$.

  \item We now finish the proof. For any $z\in Z\setminus U$, suppose $f^*z=\sum m_iF_i$, where $F_i$ are the irreducible components. Note that
  $$\sum m_i\vol((K_X+B+\lambda A+M)|_{F_i})=\vol((K_X+B+\lambda A+M)|_F)\leq \lambda^{d-1}u$$
  where $F$ is a general fiber of $f$.

  Clearly there is at least one component $F_i$ is not contracted over $T$, say $F_1$. Then as $\vol((K_X+B+\lambda A+M)|_{F_1})\geq c$, we see $cm_1\leq \lambda^{d-1}u$ and thus $m_1\leq \frac{\lambda^{d-1}u}{c}$ is bounded from above.

  Note that as $q(K_X+B+M)\sim qf^*(K_Z+B_Z+M_Z)$ and $\mu_zB_Z=1$, we see $qf^*M_Z$ is integral over $z$. Thus $q(\mu_z M_Z)(\sum m_iF_i)$ is integral, so $qm_1(\mu_z M_Z)$ is integral. As $m_1\leq \frac{\lambda^{d-1}u}{c}$, we may take $p:=q\roundup{\frac{\lambda^{d-1}u}{c}}!$ such that $p\mu_zM_Z$ is integral. Note that $p$ clearly depends only on $q,d,\lambda,u,c$, hence depending only on $d,q,\Phi,u$ in turn.
\end{enumerate}
\end{proof}

\begin{proof}[Proof of Theorem \ref{g-pair effective canonical bundle formula}]
First, by the non-effective version canonical bundle formula for g-pairs Lemma \ref{adjunction formula for R coefficients}, there is a high resolution $\bar{Z} \to Z$ such that $M_{\bar{Z}}$ is nef. Let $\phi: \bar{X}\to X$ be a log resolution such that $\bar{X}\dashrightarrow \bar{Z}$ is a morphism and $M'$ descends to $\bar{X}$. Let $\bar{\Delta}$ be the horizontal/$Z$ part of the reduced exceptional divisor of $\phi$ plus the birational transform of the horizontal/$Z$ part of $B$. So every generalised non-klt center of $(\bar{X},\bar{\Delta}+\bar{M})$ is horizontal over $\bar{Z}$. 

Run an MMP on $K_{\bar{X}}+\bar{\Delta}+\bar{M}$ over $X$ with the scaling of an ample divisor. we claim that such an MMP terminates over the generic point of $\bar{Z}$. As over the generic point, we may write
$$K_{\bar{X}}+\bar{\Delta}+\bar{M}=\phi^*(K_X+B+M) +\bar{E}$$
where $\bar{E}\geq 0$ is an exceptional $\bQ$-divisor on $\bar{X}$. Note that $\bar{X}$ is of Fano type over $X$ as $\bar{X}\to X$ is birational. Since $M'$ descends to $\bar{X}$, we have $\bar{M}$ is big and nef over $X$, thus it's semi-ample/$X$. In particular, there exists an effective $\bQ$-divisor $\bar{C}$ such that $K_{\bar{X}}+\bar{\Delta}+\bar{M}\sim_{\bQ} K_{\bar{X}}+\bar{C}/X$ where $(\bar{X},\bar{C})$ is a dlt pair. Then as 
$$K_{\bar{X}}+\bar{C}\sim_{\bQ} \phi^*(K_X+B+M)+\bar{E}$$
over the generic point, 
and the $(K_{\bar{X}}+\bar{\Delta}+\bar{M})$-MMP/$X$ is also an $(K_{\bar{X}}+\bar{C})$-MMP/$X$, we see the MMP indeed terminates over the generic point of $\bar{Z}$, by \cite[Theorem 1.8]{Bir12}.

So we reach a model $X''$ where $K_{X''}+\Delta''+M''\sim_{\bQ}0$ over the generic point of $\bar{Z}$. So in particular $\Supp \bar{E}$ is contracted over the MMP process. Then by \cite[Theorem 1.3]{LX23}, we may run a further MMP on $K_{X''}+\Delta''+M''$ over $\bar{Z}$ that terminates to a good minimal model, as the generalised non-klt center of $(X'',\Delta''+M'')$ is horizontal over $\bar{Z}$. Replacing $X''$ by the minimal model we may assume $K_{X''}+\Delta''+M''$ is semi-ample over $\bar{Z}$ defining a contraction $X''\to \ZZZ/\bar{Z}$. Note that $\ZZZ \to \bar{Z}$ is birational as $K_{\bar{X}}+\bar{\Delta}+\bar{M}$ has relative Kodaira dimension $0$ over $\bar{Z}$. Moreover, the moduli divisor of $(X'',\Delta''+M'')\to \ZZZ$ coincides with the moduli divisor of $(X,B+M)\to Z$, as by our construction, the pullbacks of $K_X+B+M$ and $K_{X''}+\Delta''+M''$ are equal over the generic point of $Z''$.
It's easy to see that there is an induced adjunction formula
$$q(K_{X''}+\Delta''+M'')\sim q f''^*(K_{Z''}+B_{Z''}+M_{Z''}) $$
where $f'':X''\to \ZZZ$ is the corresponding contraction.

As $\ZZZ$ is a higher model, it suffice to find a bounded $p\in\bN$ such that $pM_{Z''}$ is integral, then $pM_{\bar{Z}}$ will be Cartier automatically.
Cutting $Z''$ by $(\dim Z-1)$-general hyperplane sections, we can find a curve $C\subset Z''$, an g-lc pair $(S,\Gamma+M_S)$ over $C$ with $S\subset X''$ such that $K_S+\Gamma+M_S\sim_\bQ 0/C$, and the coefficients of $\Gamma$ belong to $\Phi$. Moreover, let $A''$ be the birational transform of the horizontal/$Z$ part of $A$ and $H:=A''|_S$. Then over the generic point of $C$, $(S,\Gamma +tH)$ is lc for some $t>0$, and $H$ is big and semi-ample with $\vol (H|_G)\leq u$ for the general fibre $G$ of $g:S\to C$. In addition, we have an induced adjunction formula 

$$q(K_S+B_S+M_S)\sim qg^*(K_C+B_C+M_C)$$
where $M_C:=M_{Z''}|_C$.
By Lemma \ref{curve case}, there is $p\in \bN$ depending only on $d,q,\Phi,u$ such that $pM_C$ is integral. This implies $pM_{Z''}$ is integral as $C\subset Z''$ is general.
\end{proof}

\begin{proof}[Proof of Proposition \ref{another versiom}]
    Note that over the general fibre $F$ of $f:X\to Z$, we have $K_F+B_F+M_F\sim_{\bQ}0$, where $M'_F=\sum \mu_jM_j'|_F$. By \cite[Theorem 1.6]{effectiveIitaka}, we have those $\mu_j$ where $M_j'|_F$ is not numerically trivial belong to a fixed finite set, thus after replacing $q$ by a bounded multiple, we may assume $qM'$ is Cartier. We are done by Theorem \ref{g-pair effective canonical bundle formula}.
\end{proof}

\subsection{Proof of Theorem \ref{MAIN THM2}}

\begin{proof}[Proof of Theorem \ref{MAIN THM2}]
  Pick $(X, B+M)\in \cI_{glc}(d,\Phi,q,<u)$ with data $\pi:\XX\to X$ and nef part $M'$. Let $f:X\to Z$ be the g-lc trivial fibration such that $K_X+B+M\sim_\bQ 0/Z$. By Theorem \ref{g-pair effective canonical bundle formula}, we have a bounded $r\in \bN$ depending only on $d,\Phi,q,u$ such that
  $$q(K_X+B+M)\sim qf^*(K_Z+B_Z+M_Z)$$
  with $pM_{Z'}$ is Cartier nef for any high resolution $\ZZ \to Z$. Moreover, the coefficients of $B_Z$ belong to a fixed DCC set $\Psi$ depending only on $d,\Phi$ by \cite[Theorem 1.5]{effectiveIitaka}. We may assume $\frac{1}{p}\in \Psi$, then $(Z,B_Z+M_Z)\in \cF_{glc}(\dim Z,\Psi)$. We have
  $$\Ivol(K_X+B+M)=\vol(K_Z+B_Z+M_Z)$$
  belongs to a fixed DCC set depending only on $d,\Phi,q,u$ by \cite[Theorem 1.3]{BVGP}.
\end{proof}

\subsection{The Fano type case}
In this subsection we study the Fano type case, where the situation is much easier, as the existence of $q$ and $A$ are automatic.

We also provide an effective adjunction formula in the Fano type setting, where we state the following result for $\bR$-divisors. Here we remark that there is an $\bR$-divisor version of Theorem \ref{g-pair effective canonical bundle formula} as well, but we have omitted it for the simplicity of assumptions.

\begin{lem}\label{g-pair Fanotype eff adjunction}
  Let $d \in \mathbb{N}$, $\Phi \subset \mathbb{R}^{\geq 0}$ a DCC set, $\Phi' \subset \mathbb{R}^{\geq 0}$ a finite set. Then there exists a finite set $\Psi\subset\bR^{\geq 0}$ depending only on $d,\Phi,\Phi'$ satisfying the following. Assume that    
  \begin{itemize}        
  \item $(X, B+M)$ is projective generalised klt of dimension $d$ with data $\phi:X'\rightarrow X$ and $M'$, 
  \item the coefficients of $B$ are in $\Phi$,        
  \item $M'=\sum \mu_j M_j'$ where $M_j'$ are nef Cartier and $\mu_j\in \Phi'$,      
  \item $f: X \to Z$ is a contraction with $K_X + B +M \sim_{\mathbb{R}} 0/Z$, and
  \item $X$ is of Fano type over $Z$, i.e. $-K_X$ is big over $Z$.
  \end{itemize}    
Then there is an effective adjunction formula
  $$K_X+B+M\sim_\bR f^*(K_Z+B_Z+M_Z) = f^*(\sum_{i=1}^lr_i(K_Z+B_{i,Z}+M_{i,Z}))$$   
  such that $M_{Z'}=\sum_{i=1}^l r_iM_{i,Z'}$ on some high resolution $\ZZ \to Z$ where $r_i\in \Psi$ and $M_{i,\ZZ}$ is Cartier nef for any $i$.
\end{lem}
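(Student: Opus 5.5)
Since $X$ is of Fano type over $Z$, the first step is to reduce to finitely many coefficients. Let $F$ be a general fibre. Then $(F,B_F+M_F)$ is a g-klt Calabi--Yau pair, $F$ is of Fano type, and $B_F\in\Phi$. By global ACC for g-pairs \cite[Theorem 1.6]{effectiveIitaka}, the horizontal coefficients of $B$ lie in a finite set $\Phi_0$ depending only on $d,\Phi,\Phi'$. We may discard the $M'_j$ that are numerically trivial on $F$, or absorb them into the moduli part as pullbacks from the base. Next, as in the proof of Theorem \ref{effective adjunction formula}, we replace the vertical part. Take a log resolution $X'\to X$ and $Z'\to Z$, keep only the horizontal part of $B$ together with the exceptional and reduced boundary divisors over $L=Z'\setminus U'$, and run an MMP over $Z'$. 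This produces a model $(V,\Gamma_V+M_V)\to Z''$ with the same moduli b-divisor and with $\Gamma_V\in\Phi_0\cup\{1\}$. The Fano type property over the base is preserved over the generic point, since $F$ is unchanged up to small modification. The glc (non-klt) centres of the new pair are vertical, so the finite-coefficient lemmas below must be applicable in the g-lc setting; I expect this to be fine, because Lemma \ref{Decomposition of g-div} allows g-lc.

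With finite coefficients, apply Lemma \ref{Decomposition of g-div} to write
\[
K_V+\Gamma_V+M_V=\sum_i r_i\bigl(K_V+\Gamma_i+M_{i,V}\bigr),
\]
where the $r_i$ lie in a finite set, $K_V+\Gamma_i+M_{i,V}\sim_\bQ 0/Z''$, $q(K_V+\Gamma_i+M_{i,V})$ is integral, $qM'_i$ is nef Cartier, and the moduli b-divisors add up. For each $i$, restrict to the general fibre $F$, which is of Fano type. There $q(K_F+\Gamma_{i,F}+M_{i,F})$ is a Weil divisor that is $\bQ$-linearly trivial. On a Fano type variety, $\mathrm{Pic}$ and the class group are torsion-free modulo numerical equivalence in the relevant sense, since $h^1(\cO_F)=0$ and $\pi_1$ is finite. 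I plan to use the boundedness of $\epsilon$-lc Fano type fibres (BAB, \cite{BABI}) to bound the Cartier index and the torsion of the class group. Replacing $q$ by a bounded multiple, we get $q(K_F+\Gamma_{i,F}+M_{i,F})\sim 0$. Then, exactly as in the proof of Lemma \ref{finite effective adjunction}, there is a rational function $\alpha_i$ such that $q(K_V+\Gamma_i+M_{i,V})+\mathrm{Div}(\alpha_i)$ is vertical and hence is a pullback. This gives
\[
q(K_V+\Gamma_i+M_{i,V})\sim qg^*\bigl(K_{Z''}+\Gamma_{i,Z''}+M_{i,Z''}\bigr).
\]

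Now each piece satisfies the hypotheses of Theorem \ref{g-pair effective canonical bundle formula}, except for the divisor $A$. To supply $A$, use the Fano type structure: since $-K_V$ is big over $Z''$, and $F$ lies in a bounded family (BAB with $\epsilon$ from Lemma \ref{g-pair version of 2.48} applied to the klt Calabi--Yau fibres), we can choose an effective $\bQ$-divisor $A\sim_\bQ -K_V$ over $Z''$. By a bounded complement-type argument, we can take $A$ with coefficients in a fixed finite set, ample over the generic point, with $(V,\Gamma_i+tA+M_{i,V})$ g-lc generically and $\vol(A|_F)\le u$ for a bounded $u$. The cleanest route is probably to take $A$ to be a relatively ample divisor coming from the bounded family of fibres. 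Theorem \ref{g-pair effective canonical bundle formula} (or Proposition \ref{another versiom}) then gives a bounded $p$ with $pM_{i,Z'}$ Cartier nef. Finally, set the new coefficients to $r_i/p\in\Psi$ and transfer the formula back to $X\to Z$ via the crepant identification, as at the end of the proof of Theorem \ref{effective adjunction formula}.

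The main obstacle will be the construction of $A$ with controlled coefficients and volume. The closely related difficulty is the torsion-freeness step showing $q(K_F+\dots)\sim 0$ with bounded $q$. I would attack both through boundedness of the fibres $F$: BAB, using that the g-klt Calabi--Yau structure with DCC data gives uniform $\epsilon$-lc.
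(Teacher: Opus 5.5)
The overall structure is the paper's, and the pieces are the same ones:
\begin{itemize}
\item global ACC \cite[Theorem 1.6]{effectiveIitaka} for the horizontal coefficients;
\item replacing the vertical part as in the proof of Theorem \ref{effective adjunction formula};
\item the uniform decomposition of Lemma \ref{Decomposition of g-div};
\item a bounded Cartier index on the Fano type fibre plus torsion-freeness of $\mathrm{Pic}(F)$, to get $q(K+\cdots)\sim qf^*(\cdots)$;
\item finally Theorem \ref{g-pair effective canonical bundle formula}.
\end{itemize}
The paper only runs these in the opposite order, treating the $\bQ$-case first.

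The gap is exactly the step you flag as the main obstacle, and the justifications you give for it do not work as stated. First, BAB does not bound $\epsilon$-lc varieties that are merely of Fano type. The Hirzebruch surfaces $\mathbb{F}_n$ are smooth and toric, hence $1$-lc and of Fano type, yet they form an unbounded set. BAB needs $-K_F$ (or $-(K_F+\Delta)$ for some $\epsilon$-lc $(F,\Delta)$) to be nef and big. Without boundedness of $F$ you get neither the bounded Cartier index nor a bound on $\vol(A|_F)$. The Cartier index is genuinely needed, because it is $\mathrm{Pic}(F)$, not the class group, that is torsion-free. Second, an effective $A\sim_{\bQ}-K_V$ over $Z''$ is semi-ample over the generic point only if $-K_V$ is nef there; bigness is not enough. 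Taking an ample divisor from the bounded family of fibres does not obviously help either, since a divisor on $F$ need not extend to $X$ (monodromy).

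The missing idea is the first move of the paper's proof. After a small $\bQ$-factorialisation, run an MMP on $-K_X$ over $Z$, which exists because $X$ is of Fano type over $Z$. Since $K_X+B+M\sim_{\bR}0/Z$, every step is $(K_X+B+M)$-trivial. So the pair stays g-klt and the moduli b-divisor is unchanged, while at the end $-K_X$ is nef and big, hence semi-ample, over $Z$.

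Now $(F,0)$ is $\epsilon$-lc: $(F,B_F+M_F)$ is g-$\epsilon$-lc by Lemma \ref{g-pair version of 2.48}, $B_F\geq 0$, and $M'$ is nef. So $F$ is an $\epsilon$-lc weak Fano variety, bounded by \cite{BABII}. Hence $\vol(-K_F)\leq v$ for some $v$ depending only on $d,\epsilon$. Then \cite[Lemma 2.24]{BABI} gives a bounded $l$ with $lK_F$ and $l(K_F+B_F+M_F)$ Cartier, and torsion-freeness of $\mathrm{Pic}(F)$ gives $l(K_X+B+M)\sim lf^*(K_Z+B_Z+M_Z)$.

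After this, $A$ only has to be an effective divisor $\bQ$-linearly equivalent to $-K_X$ over $Z$ with controlled coefficients, and no complements are needed. The paper makes $lM'$ base point free (via \cite{effectivebasepointfree}), lets $D$ be the pushdown of a general member of $|lM'|$, and sets $A:=B+\frac{1}{l}D$. Then:
\begin{itemize}
\item $lA\sim lB+lM\sim -lK_X$ over $Z$, so $A$ is semi-ample over $Z$;
\item its coefficients lie in a DCC set;
\item $(X,B+tA+M)$ is g-klt for small $t>0$;
\item $\vol(A|_F)=\vol(-K_F)\leq v$.
\end{itemize}
This is exactly the input Theorem \ref{g-pair effective canonical bundle formula} needs.

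If you keep your order of steps, do this MMP and choose $A$ on $X$ before the vertical replacement. Make sure the replacement model agrees with $X$ over a non-empty open subset of the base, for instance by first running the MMP over $X$ as in the proof of Theorem \ref{g-pair effective canonical bundle formula}. Then the birational transform of the horizontal part of $A$ is still semi-ample there, which is all that theorem requires.
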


\begin{proof}
   \begin{enumerate} [label=\textsl{Step} \arabic{enumi}.,wide=13pt,itemsep=13pt]
   \item We first show the case when $\Phi\subset \bQ^{\geq 0}$ and $qM'$ is Cartier for some fixed $q\in \bN$. Replacing $X$ by a small $\bQ$-factorialisation and replace $K_X+B+M$ by its crepant pullback, we can assume $X$ is $\bQ$-factorial. We may run an MMP on $-K_X$ over $Z$ so that we obtain a model $X''$ such that $-K_{X''}$ is nef and big over $Z$. Moreover, $(X'',B''+M'')$ is still generalised klt as $K_X+B+M\sim_{\bR}0/Z$ is MMP-trivial. Replacing $(X,B+M)\to Z$ by $(X'',B''+M'')\to Z$, we may assume $-K_X$ is nef and big over $Z$.

   Let $F$ be a general fiber of $f:X\to Z$. Then since $K_F+B_F+M_F\sim_{\bR} 0$ and $B_F\in \Phi$, $\mu_j\in \Phi$, by Lemma \ref{g-pair version of 2.48}, we see there is $\epsilon >0$ depending only on $d, \Phi$ such that $(F,B_F+M_F)$ is g-$\epsilon$-lc. In particular, $(X,0)$ is $\epsilon$-lc and $-K_F$ is nef and big. By \cite{BABII}, we see $F$ belong to a fixed bounded family depending only on $d,\epsilon$, and there is a bounded number $v>0$ such that $\vol(-K_F)\leq v$. Moreover, by \cite[Theorem 1.6]{effectiveIitaka}, the coefficients of $B_F$ belong to a fixed finite set, thus in particular there is a bounded $l$ such that $l(K_F+B_F+M_F)$ is integral. By \cite[Lemma 2.24]{BABI}, after replacing $l$ by a bounded multiple, we may assume $l(K_F+B_F+M_F)$ and $lK_F$ are Cartier divisors. Since $F$ is of Fano type, any Cartier divisor $D\sim_{\bQ} 0$ is actually linearly equivalent to $0$, as $\mathrm{Pic}(F)$ is torsion free by \cite[Proposition 2.12]{fano type}. Thus $l(K_F+B_F+M_F)\sim 0$ and we deduce that $l(K_X+B+M)\sim lf^*L_Z$ for some $\bQ$-Cartier $\bQ$-divisor $L_Z$. In particular, by taking $M_Z:=L_Z-K_Z-B_Z$, we obtain

   $$l(K_X+B+M)\sim lf^*(K_Z+B_Z+M_Z).$$
   In addition, on $\XX$, $lM'$ is nef Cartier. Since $X'\to Z$ is also of Fano type, by effective base-point-free theorem \cite{effectivebasepointfree}, after replacing $l$ by a bounded multiple, we may assume $lM'$ is base-point-free, and there is some effective integral divisor $0\leq D'\sim lM'$. Let $A:=B+\frac{1}{l}D$, where $D$ denotes the pushdown of $D'$ to $X$. Then $lA$ is effective integral, and $lA\sim lB+lM\sim -lK_X$, and $0<\vol(A|_F)=\vol(K_F) \leq v$. We are done by Theorem \ref{g-pair effective canonical bundle formula}.

   \item We now shortly explain the general $\bR$-divisor case. When $\Phi$ is also a finite set, we use Lemma \ref{Decomposition of g-div} to decompose $K_X+B+M$ uniformly 
   $$K_X+B+M=\sum_{i=1}^lr_i(K_X+B_i+M_i)$$
   such that each $(X, B_i+M_i)$ satisfy the conditions listed in Lemma \ref{Decomposition of g-div}. In particular, we can apply the $\bQ$-divisor version proved above to find an effective adjunction formula for $(X,B_i+M_i)\to Z$, and thus also an effective adjunction formula for $(X,B+M)\to Z$ as well.

   Now when $\Phi$ is a DCC set, by \cite[Theorem 1.5]{effectiveIitaka}, we know that $B_Z\in \Psi$ for some fixed DCC set $\Psi$ depending only on $d,\Phi,\Phi'$. So we are mainly focused on the moduli part $M_{Z'}'$ on some high resolution $\ZZ \to Z$. Note that $K_F+B_F+M_F\sim_{\bR}0$ implies the coefficients of $B_F$ belong to a fixed finite set, by \cite[Theorem 1.6]{effectiveIitaka}. In particular, the coefficients of the horizontal/$Z$ part $B^h$ of $B$ belong to a finite set. By a similar treatment as the proof of Theorem \ref{effective adjunction formula}, we may construct a new g-pair $(W, \Delta+M_W)$ on some log resolution $W\to X$ and then run some MMP to reach a model $(X'',\Delta''+M'')\to Z''$ such that the coefficients of $\Delta''$ belong to a fixed finite set, and $K_{X''}+B''+M''\sim_{\bR}0/Z''$. Moreover, the pullback of $K_X+B+M$ and $K_{X''}+B''+M''$ on a common resolution are the same over the generic point of $Z''$. We may replace $(X,B+M)\to Z$ by $(X'',\Delta''+M'')\to \ZZ$ and then we are done by the previous paragraph.
\end{enumerate}
\end{proof}

\begin{proof}[Proof of Corollary \ref{MAIN COR1} and Corollary \ref{MAIN COR2}]
 These are the direct consequence of Lemma \ref{g-pair Fanotype eff adjunction} and \cite[Theorem 1.3]{BVGP}.   
\end{proof}

\subsection{Proof of Theorem \ref{MAINTHMnew}}

We now deal with the case when $(X,B+M)$ is g-klt. More generally, we can assume $(X,B+M)\to Z$ is generalised klt over the generic point of $Z$. In this case, we may also derive an effective adjunction formula, where the treatment is similar to the proof of \cite[Theorem 11.1]{singularitiesonFanofibrations}. Here the interesting thing is, although we cannot make $p(K_X+B+M)\sim pf^*(K_Z+B_Z+M_Z)$ in general, we can still control $M_Z$ so that $pM_{Z'}$ is Cartier nef on any high resolution $Z'\to Z$.

\begin{prop}\label{effectiveadjunctionwhensingularitiesarecontrolled}
Let $d,q$ be two positive integers, $u\in \bQ^{>0}$ a positive rational number, and $\Phi\subset \bQ^{\geq0}$ a DCC set.  Consider the set of $(X,B+M),A$ such that \begin{itemize}     
\item $(X,B+M)$ is a projective generalised lc pair of dimension $d$ with data $\pi: \XX \rightarrow X$ and nef part $\MM$,     
\item $B\in \Phi$ and $q\MM$ is nef Cartier,     
\item there is a contraction $f:X\to Z$ such that $K_X+B+M\sim_\bQ 0/Z$, 
\item $(X,B+M)$ is generalised klt over the generic point $\eta_Z$ of $Z$, and
\item there is an integral divisor $A$ on $X$ such that $0<\vol (A|_F)\leq u$, where $F$ is the general fiber of $f:X\rightarrow Z$. 
\end{itemize}  

Then there is a positive integer $p$ depending only on $d,q,u,\Phi$ such that there is an adjunction formula $$K_X+B+M\sim_{\bQ} f^*(K_Z+B_Z+M_Z)$$ and $pM_{Z'}$ is nef Cartier for any high resolution $\ZZ \to Z$.
    
\end{prop}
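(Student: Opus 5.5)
We follow the strategy announced in the introduction, in the spirit of \cite[Theorem 11.1]{singularitiesonFanofibrations}. The idea is to produce, birationally over $Z$, an intermediate fibration $X''\to Y\to Z$ in which $X''\to Y$ is of Fano type. We then apply the Fano type effective adjunction (Lemma \ref{g-pair Fanotype eff adjunction}) to $X''\to Y$, and handle $Y\to Z$ by induction on $d$.

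\textbf{Step 1: control the general fibre.} Let $F$ be a general fibre. Then $(F,B_F+M_F)$ is g-klt with $K_F+B_F+M_F\sim_\bQ 0$, and its coefficients lie in a DCC set. By Lemma \ref{g-pair version of 2.48}, $(F,B_F+M_F)$ is g-$\epsilon$-lc for some $\epsilon=\epsilon(d,q,\Phi)$. By global ACC \cite[Theorem 1.6]{effectiveIitaka}, the horizontal coefficients of $B$ lie in a finite set. Using $A$ and $\vol(A|_F)\le u$, we would like $F$ (or a bounded birational model of it) to lie in a bounded family; the idea is to apply \cite[Theorem 6.2]{GOPV} and Remark \ref{remark about gklt} after replacing $A|_F$ by a nef and big modification. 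This gives a uniform $\lambda>0$ such that $(F,B_F+\lambda A_F+M_F)$ is g-klt. Because $A$ is merely integral, not effective, we first replace $A$ by a bounded multiple and by a general member of a suitable linear system, so that it becomes effective over $\eta_Z$. Making this replacement is part of the difficulty.

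\textbf{Step 2: construct $X''\to Y$.} Pass to a $\bQ$-factorial g-dlt model over $Z$. Run an MMP over $Z$ on $K_X+B+M+\lambda A\sim_\bQ \lambda A/Z$, which terminates by \cite[Lemma 4.4]{effectiveIitaka} and Theorem \ref{g-pair special MMP}, to obtain a model on which $A''$ is semi-ample and big over $Z$. We then take the divisor $K_{X''}+tA''$: for a suitable rational $t$ determined by the bounded data on $F$, it becomes semi-ample but not big over $Z$, via a pseudo-effective threshold argument. Since the general fibre is klt and log Calabi--Yau, this threshold is attained at a value lying in a finite set by ACC. The resulting contraction $X''\to Y/Z$ has $-K_{X''}$ big over $Y$, because $K_{X''}+tA''\equiv 0/Y$ and $A''$ is big. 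Hence $X''$ is of Fano type over $Y$. Moreover, $K_{X''}+B''+M''\sim_\bQ 0/Y$, and its pullback agrees with that of $K_X+B+M$, so the moduli b-divisor over $Z$ is unchanged.

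\textbf{Step 3: adjunction in two stages and induction.} Lemma \ref{g-pair Fanotype eff adjunction} gives
\[K_{X''}+B''+M''\sim_\bQ g^*(K_Y+B_Y+M_Y),\]
with $p_1M_{Y'}$ nef Cartier and $p_1=p_1(d,q,\Phi)$; the coefficients of $B_Y$ lie in a DCC set by \cite[Theorem 1.5]{effectiveIitaka}. Next, $(Y,B_Y+M_Y)\to Z$ is g-klt over $\eta_Z$ with $\dim Y<d$, provided $Y\ne X''$; the case where $Y\to Z$ is birational is trivial. We need an integral divisor on $Y$ with bounded volume on the general fibre of $Y\to Z$. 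We obtain it by pushing forward a bounded multiple of $A''$ restricted to a general complete intersection section, or more directly by using boundedness of the general fibre of $Y\to Z$, which is the image of a bounded $F$. Induction then gives $p_2$ such that $p_2M_{Z'}$ is nef Cartier. Finally, the moduli b-divisor of $X''\to Z$ coincides with that of the composite adjunction $Y\to Z$, by transitivity of the generalised canonical bundle formula, so $p=p_2$ works.

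The main obstacle is Step 2 together with the inductive hypothesis in Step 3. Producing a uniform $t$ and a bounded integral polarisation on $Y$ from a non-effective integral $A$ requires first proving boundedness of the general fibres from the g-$\epsilon$-lc condition and $\vol(A|_F)\le u$. If the direct route fails, I would instead prove boundedness of $F$ first, and then choose a bounded very ample divisor on the bounded family to replace $A$.
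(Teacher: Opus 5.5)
Your overall route is the paper's: the pseudo-effective threshold $t$ of $K_X+tA$ over $Z$, a Fano type contraction $X''\to Y$, Lemma \ref{g-pair Fanotype eff adjunction}, then induction on dimension. However, the proposal misses the case that needs a separate argument, and two steps of your construction are not justified.

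\textbf{The missing case.} The case you cannot handle is $B+M\equiv 0$ over $\eta_Z$. This is equivalent to $K_F\equiv 0$: if $K_F$ is pseudo-effective, then $B_F+M_F\equiv -K_F$ is both pseudo-effective and anti-pseudo-effective. In this case $K_{X''}+tA''$ is big over $Z$ for every $t>0$, so the threshold is $t=0$. At $t=0$ there is no Fano type contraction. Over $\eta_Z$ the divisor $K_{X''}$ is $\bQ$-trivial, so $Y\to Z$ is birational, $X''\to Y$ is the whole Calabi--Yau fibration, and $-K_{X''}$ is not big over $Y$. Your remark that ``the case where $Y\to Z$ is birational is trivial'' is only valid when $t>0$.

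This case also occurs at every level of your induction, since the general fibre of $Y\to Z$ may itself be of Calabi--Yau type. Without it the induction has no valid terminal step. It is also the case where $M|_F$ can be a torsion class of unbounded order (compare the elliptic-curve example in the last section). So the torsion-freeness of the Picard group of Fano type fibres, which underlies Lemma \ref{g-pair Fanotype eff adjunction}, has no analogue here.

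The paper treats this case separately, after reducing to $\dim Z=1$:
\begin{itemize}
\item $B$ is vertical and $M\sim_\bQ 0$ over $\eta_Z$.
\item A negativity argument on fibre components, as in Step 3 of Lemma \ref{curve case}, gives $M\sim_\bQ 0/Z$.
\item Hence $K_X+B\sim_\bQ 0/Z$ is an ordinary lc-trivial fibration, and \cite[Lemma 7.4]{BVGP} applies to it.
\item The contribution of $M$ is handled by writing $qM$ as $f^*L_Z$ plus principal divisors and discarding the horizontal ones. This works only because $M_Z$ is determined just up to $\bQ$-linear equivalence.
\end{itemize}

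\textbf{Unjustified steps in your Steps 2--3.} First, passing to a g-dlt model does not make the pair globally g-klt, which Lemma \ref{g-pair Fanotype eff adjunction} assumes. The paper's Step 1 rebuilds the pair from only the horizontal part of $B$ plus $(1-\epsilon)$ times the horizontal exceptional divisors. It then runs MMPs first over $X$ and then over $\bar Z$, producing a globally g-$\epsilon$-lc pair with the same moduli b-divisor.

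That global $\epsilon$-lc property is what the remaining steps rely on:
\begin{itemize}
\item It puts $t$ in a finite set, via \cite[Lemma 4.11]{GOPV} and the argument of \cite[Theorem 11.1]{singularitiesonFanofibrations}. Your ``finite by ACC'' is not a justification.
\item It gives bounded multiplicities of fibres of $X''\to Y$ over codimension one points \cite[Theorem 9.3]{singularitiesonFanofibrations}.
\end{itemize}

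Second, those bounded multiplicities produce the integral divisor on $Y$ needed for the induction. The paper takes $A_Y:=p(K_Y+C_Y+N_Y)$ from the effective adjunction of $(X'',tA'')\to Y$. This divisor is integral, ample over $Z$, and satisfies $\vol(A_Y|_{F_Y})\le (1+t)u$. Boundedness of the geometric fibres of $Y\to Z$ does not by itself give a global integral divisor on $Y$, and your pushforward suggestion is not worked out. So this part of your inductive step remains open.
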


\begin{proof}
 \begin{enumerate} [label=\textsl{Step} \arabic{enumi}.,wide=13pt,itemsep=13pt]  
 \item We first reduce to the case where $(X,B+M)$ is generalised $\epsilon$-lc for some fixed $\epsilon>0$, and $\dim Z=1$. The argument is similar to the proof of Theorem \ref{g-pair effective canonical bundle formula}. After replacing $X$ with a $\bQ$-factorialisation, we may assume $X$ is $\bQ$-factorial. First of all, on the general fiber $F$ of $f$, since $(F, B_F+M_F)$ is g-klt Calabi-Yau pair, by Lemma \ref{g-pair version of 2.48}, there is some rational number $\epsilon>0$ depending only on $\dim F,\Phi $ such that $(F,B_F+M_F)$ is generalised $\epsilon$-lc. In particular, for any prime divisor $P$ over $X$ that is horizontal over $Z$, we have $$a(P,X,B+M)\geq \epsilon.$$

 Now as in the proof of Theorem \ref{g-pair effective canonical bundle formula}, we first take a high resolution $\bar{Z}\to Z$ and a log resolution $\phi: \bar{X}\to X$ of $(X,\Supp B)$ such that $\bar{X}\to \bar{Z}$ is a morphism and $M'$ descends to $\bar{X}$. Take $\bar{\Delta}$ to be the birational transform of the horizontal$/Z$ part of $B$ plus $(1-\epsilon)$-times the horizontal$/Z$ part of the reduced exceptional divisor of $\phi$. It's easy to see that $(\bar{X},\bar{\Delta}+\bar{M})$ is generalised $\epsilon$-lc. Then as before, we may run two consecutive MMP on $(K_{\bar{X}}+\bar{\Delta}+\bar{M})$, first over $X$, then over $\bar{Z}$ that ends with a good minimal model $W$. We obtain a contraction $W\to \ZZZ/\bar{Z}$. By construction, the moduli divisor of $(W,B_W+M_W)\to \ZZZ$ coincides with the moduli divisor of $(X,B+M)\to Z$, as the MMP process doesn't change the pair over the generic point of $\ZZZ$. Let $A_W$ be the birational transform of the horizontal$/Z$ part of $A$. Moreover, it suffice to find a bounded $p$ such that $pM_{\ZZZ}$ is integral, so we may cut down the dimension of $\ZZZ$ by general hyperplane sections and assume $\dim Z=1$. Replace $(X,B+M), A$ by the restriction of $(W,B_W+M_W),A_W$, and replace $\Phi$ by $\Phi\cup\{1-\epsilon\}$, we may reduce to the case where $(X,B+M)$ is $\bQ$-factorial generalised $\epsilon$-lc, and $\dim Z=1$.

 \item In this step we make some further reductions. As $Z$ is a curve, it suffice to find $p$ such that $pM_Z$ is integral. Since $(X,0)$ is $\epsilon$-lc, applying \cite[Theorem 1.1]{GOPV} to the minimal model of $A$ over $Z$, after replacing $A$ by a bounded multiple, we may assume $A$ is effective.

 First assume $B+M\equiv 0$ over the generic point of $Z$. In this case $K_Z \sim_\bQ 0$ over the generic point, thus we have $B$ is vertical over $Z$, and $M\sim_\bQ 0$ over the generic point of $Z$. As $Z$ is a curve, by the same argument as Step 3 of the proof of Lemma \ref{curve case}, we see $M\sim_\bQ 0/Z$ and thus $K_X+B\sim_\bQ 0/Z$. We then can apply \cite[Lemma 7.4]{BVGP} such that there is a bounded $p$ and an effective adjunction formula for $(X,B)$:
 $$p(K_X+B)\sim pf^*(K_Z+B_Z+M^1_Z)$$
 such that $pM^1_{Z'}$ is integral. Moreover, by the proof of \cite[Lemma 7.3]{BVGP}, we may assume the multiplicities of fibers of $f:X\to Z$ is bounded from above. Let $$qM=f^*L_Z+\sum_{i\in I}a_i\Div(s_i),$$ where $a_i\in \bQ$ and $\Div(s_i)$ are principle divisors with no common support. Since $qM$ is integral, any $a_i\Div(s_i)$ that is horizontal$/Z$ must be an integral divisor, and any $a_i\Div(s_i)$ that is vertical$/Z$ must be the pullback of some $b_i\Div(t_i)$ on $Z$. In particular, by decomposing $\sum_{i\in I}a_i\Div(s_i)$ into the horizontal part and the vertical part, we have
 $$qM= f^*L_Z+\sum_{i\in I_1}a_i\Div(s_i) +\sum_{i\in I_2}f^*(b_i\Div(t_i)).$$
Let $M^2_Z=\frac{1}{q}(L_Z+\sum_{i\in I_2}b_i\Div(t_i))$, then $$qf^*M^2_Z=qM-\sum_{i\in I_1}a_i\Div(s_i)\sim_{\bQ} qM$$
is an integral divisor, so after replacing $p$ by a bounded multiple, the coefficient $p\mu_zM^2_Z$ is integral for any closed point $z\in Z$. Alternatively, we may also apply \cite[Lemma 3.5]{GOPV} and \cite[Theorem 11.1]{singularitiesonFanofibrations} on $qM^2_Z$ to find a bounded $p$ such that $pM^2_Z$ is integral. This implies $pM_Z=p(M^1_Z+M^2_Z)$ is integral indeed.

 \item From now on assume $B+M$ is not numerically trivial over the generic point of $Z$. In this case $K_X$ is not pseudo-effective over $Z$. Replacing $X$ by the minimal model of $A$ over $Z$, we may assume $A$ is nef and big. Let $t$ be the smallest number such that $K_X+tA$ is pseudo-effective over $Z$. Clearly $t>0$. By the argument in \cite[Theorem 11.1]{singularitiesonFanofibrations}, we can reduce to the case that $t\geq 1$ is a rational number, and such $t$ has only finitely many choices depending only on $d,\epsilon$. Note that these reductions relies on \cite[Lemma 4.11]{GOPV}, which requires restriction on the singularities of $X$.

 View $(X, tA)$ as a generalised pair with nef part $tA$ itself. Clearly $(X,tA)$ is generalised $\epsilon$-lc. Let $X''$ be the good minimal model of $K_X+tA$ over $Z$, which exists by \cite[Lemma 4.4]{effectiveIitaka} as $tA$ is big over $Z$. Let $B'',M''$ be the pushdown of $B,M$ to $X''$.  Note that $K_{X''}+tA''$ is semi-ample but not big over $Z$. Both $(X'',tA'')$ and $(X'',B''+M'')$ are generalised $\epsilon$-lc by construction. We have a non-birational contraction $f'':X''\to Y/Z$. Moreover, as $A''$ is big over $Y$, we see $-K_{X''}$ is big over $Y$ thus $X''\to Y/Z$ is of Fano type. In particular, by Lemma \ref{g-pair Fanotype eff adjunction} and its proof, there is a bounded $p$ such that we have the following adjunction formulas:

 $$p(K_{X''}+tA'')\sim pf{''}^*(K_Y+C_Y+N_Y)$$

 and

 $$p(K_{X''}+B''+M'')\sim pf''^*(K_Y+B_Y+M_Y) $$

 where both $B_Y$ and $C_Y$ belong to a fixed DCC set $\Psi$, and $pM_{Y'}$ is Cartier nef for any high resolution $Y'\to Y$.

 \item By \cite[Theorem 9.3]{singularitiesonFanofibrations}, the multiplicities of the fibers of $f''$ over codimension one points are bounded, we can assume $p(K_Y+C_Y+N_Y)$ is also integral, after replacing $p$ by a bounded multiple. Let $$A_Y:=p(K_Y+C_Y+N_Y).$$  It's easy to see $A_Y$ is ample over $Z$, and by the proof of \cite[Theorem 11.1]{singularitiesonFanofibrations}, we can see $\vol(A_Y|_{F_Y})\leq (1+t)u$ is bounded from above, where $F_Y$ is the general fiber of $Y\to Z$.

 By \cite[Theorem 11.1]{singularitiesonFanofibrations}, there is a fixed $\tau>0$ such that $(Y,B_Y+M_Y)$ is generalised $\tau$-lc. Replace $(X,B+M),A$ by $(Y,B_Y+M_Y),A_Y$, and replace $\epsilon,q, \Phi,u$ by $\tau, p, \Psi, (1+t)u$, we have $(Y,B_Y+M_Y)\to Z$ with $K_Y+B_Y+M_Y\sim _\bQ 0/Z$.

 \item As $\dim Y<\dim X$, by induction, continuing the process above will eventually give a birational morphism $Z'\to Z$ and a generalised pair $(Z',B_{Z'}+M_{Z'})$. Moreover, there is a bounded $p$ such that $pM_{Z'}$ is integral. This implies $pM_Z$ is integral as well. 
 \end{enumerate}
\end{proof}

\begin{remark}
    As before, Proposition \ref{effectiveadjunctionwhensingularitiesarecontrolled} also holds for $\bR$-divisors. Indeed, Lemma \ref{g-pair Fanotype eff adjunction} is stated for $\bR$-divisors, so the proof above goes through with minor changes. We omit the detail of proof for its similarity to the other results in this paper.
\end{remark}

\begin{proof}[Proof of Theorem \ref{MAINTHMnew}] This is the direct consequence of Proposition \ref{effectiveadjunctionwhensingularitiesarecontrolled} and \cite[Theorem 1.3]{BVGP}.   
\end{proof}

\section{Boundedness of generalised klt trivial fibrations}
In this section we prove several boundedness results about generalised klt trivial fibrations. They are the direct application of the results in previous sections.

\subsection{Boundedness of polarised g-klt Calabi-Yau pairs}
We first provide a generalised version of \cite[Corollary 1.8]{GOPV}

\begin{thm}
  Let $d\in \bN$, $v\in \bR^{>0}$, and $\Phi\subset \bR^{\geq 0}$ be a finite set. Consider $(X,B)$ and $N$ such that
  \begin{itemize}
      \item $(X,B+M)$ is a projective generalised klt pair of dimension $d$ with data $\pi:X'\to X$ and $M'$,
      \item $B\in \Phi$, $M'=\sum \mu_j M_j'$ where $M'_j$ is nef Cartier and $\mu_j\in \Phi$ for any $j$,
      \item $N\geq 0$ is an ample $\bR$-divisor such that $N\in \Phi$, and
      \item $\vol(N)\leq v$.
  \end{itemize}
  Then the set $(X,\Supp(B+N))$ forms a bounded family. 
\end{thm}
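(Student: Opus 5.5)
Throughout I will read the statement as concerning polarised generalised Calabi--Yau pairs, as the subsection title indicates. So I assume in addition that $K_X+B+M\sim_{\bR}0$. Without this hypothesis the volume of $K_X+B+tN+M$ is not controlled by $\vol(N)$, and I do not see how to make the argument work.

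The plan is to reduce to the boundedness of generalised klt pairs with ample log canonical divisor of bounded volume, namely the second statement of Theorem \ref{bdd gpair}. The key steps, in order, are as follows.

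\textbf{Step 1: uniform $\epsilon$.} A finite set is DCC and $(X,0)$ is klt since $(X,B+M)$ is g-klt. So Lemma \ref{g-pair version of 2.48} gives $\epsilon>0$, depending only on $d,\Phi$, such that $(X,B+M)$ is generalised $\epsilon$-lc. This is because no divisor has log discrepancy $0$.

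\textbf{Step 2: uniform $t$.} Apply Theorem \ref{g-pair version of 6.4}, in the g-klt form of Remark \ref{remark about gklt}, with the following inputs:
\begin{itemize}
\item $N$ ample, hence nef and big;
\item the coefficients of $N$ are at least $\delta:=\min(\Phi\setminus\{0\})$;
\item $(X,B+uN+M)$ is g-klt for small $u>0$, since $(X,B+M)$ is g-klt;
\item $\vol(N)\le v$.
\end{itemize}
This yields $t>0$, depending only on $d,v,\Phi$, such that $(X,B+tN+M)$ is generalised $\frac{\epsilon}{2}$-lc. This uniform lc-threshold step is the main obstacle. All the work is already contained in Theorem \ref{g-pair version of 6.4} and its remark, so the task is only to check that the hypotheses there are met.

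\textbf{Step 3: boundedness of the pair.} We have
$$K_X+B+tN+M\sim_{\bR}tN,$$
so $K_X+B+tN+M$ is ample with volume at most $t^dv$. The coefficients of $B+tN$ and the $\mu_j$ lie in the finite set $\Phi\cup t\Phi$. Hence $(X,B+tN+M)$ belongs to $\cF_{gklt}(d,\Phi\cup t\Phi,\le t^dv,\frac{\epsilon}{2})$. By Theorem \ref{bdd gpair} this family is bounded, so there is $r$ and, for each such pair, a very ample $H$ with $H^d\le r$ and $(K_X+B+tN+M)\cdot H^{d-1}\le r$.

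\textbf{Step 4: boundedness of the couple.} As noted after the definition of bounded families, boundedness of a g-pair does not directly control $\Supp M$. So I bound $B\cdot H^{d-1}$ and $N\cdot H^{d-1}$ separately.
\begin{itemize}
\item Since $K_X+B+M\equiv0$, we get $tN\cdot H^{d-1}=(K_X+B+tN+M)\cdot H^{d-1}\le r$. Hence $N\cdot H^{d-1}\le r/t$.
\item The varieties $(X,H)$ form a bounded family, so $K_X\cdot H^{d-1}$ takes finitely many values and is bounded below.
\item $M=\pi_*\MM$ is pseudo-effective, so $M\cdot H^{d-1}\ge0$.
\item Combining, $B\cdot H^{d-1}=-K_X\cdot H^{d-1}-M\cdot H^{d-1}\le -K_X\cdot H^{d-1}$, which is bounded above.
\end{itemize}
Every nonzero coefficient of $B$ and $N$ is at least $\delta>0$. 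Therefore $\Supp(B+N)\cdot H^{d-1}$ is bounded, and by \cite[Lemma 2.20]{BABI} the couples $(X,\Supp(B+N))$ form a bounded family.
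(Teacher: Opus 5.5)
Your proposal is correct and follows the paper's proof: Remark~\ref{remark about gklt} gives the uniform $\epsilon$ and $t$, and Theorem~\ref{bdd gpair} then bounds $(X,B+tN+M)$. The paper's proof also silently uses $K_X+B+M\sim_{\bR}0$ (through $\vol(K_X+B+\lambda N+M)=\vol(\lambda N)$ and the Calabi--Yau hypothesis of Theorem~\ref{g-pair version of 6.4}), so your added hypothesis is the intended one, and your Step~4 just spells out the paper's ``in particular'' passage from boundedness of the g-pair to boundedness of the couple. One small fix, which the paper shares: the coefficients of $B+tN$ lie in the finite set $\{a+tb : a,b\in\Phi\cup\{0\}\}$, not in $\Phi\cup t\Phi$.
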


\begin{proof}
   By Remark \ref{remark about gklt}, $(X, B+M)$ is g-$\epsilon$-lc for some $\epsilon>0$ depending only on $d, \Phi$, and there is $\lambda>0$ depending only on $d,v, \Phi$ such that $(X, B+\lambda N+M)$ is g-$\frac{\epsilon}{2}$-lc. Moreover, replacing $\Phi$ by $\Phi \cup\lambda\Phi$, we may assume $B+\lambda N\in \Phi$, and 
   $$\vol(K_X+B+\lambda N+M)=\vol(\lambda N)\leq \lambda^dv$$
   Thus $(X, B+\lambda N+M)\in \cF_{gklt}(d,\Phi, \leq v,\frac{\epsilon}{2})$. By Theorem \ref{bdd gpair}, we see such $(X, B+\lambda N+M)$ forms a bounded family, in particular $(X,\Supp(B+N))$ forms a bounded set of couples. 
\end{proof}

\begin{remark}
    We remark that if we require $\vol(N)=v$ to be fixed, then the coefficient set $\Phi$ can be a DCC set, by Theorem \ref{bdd gpair} as well.
\end{remark}

\subsection{Proof of Corollary \ref{MAIN COR3}}

\begin{proof}[Proof of Corollary \ref{MAIN COR3}]
  First of all, by \cite[Theorem 1.8]{CHL24}, we see the Iitaka volume of $-(K_X+B)$ belongs to a fixed ACC set depending only on $d,\epsilon,v,\Phi$. Note that we have an effective adjunction for $(X,B)\to Z$:
  $$q(K_X+B)\sim qf^*(K_Z+B^1_Z+M^1_Z)$$
  for some bounded $q\in \bN$. Let $M:=-2(K_X+B)$, and view $(X,B+M)$ as a g-pair with nef part $M$ itself. Note that $M$ can be written as $\frac{2}{l}(-l(K_X+B))$ where $-l(K_X+B)$ is nef Cartier.

  Note that 
  $$q(K_X+B+M)= -q(K_X+B)\sim 0/Z,$$
  we automatically have an adjunction formula for $(X,B+M)\to Z$

  $$q(K_X+B+M)\sim qf^*(K_Z+B_Z^2+M_Z^2).$$
  Thus by applying Theorem \ref{MAIN THM2}, we see $\Ivol(K_X+B+M)=\Ivol (-(K_X+B))$ also belongs to a DCC set.
  In particular, there is a finite set $J$ depending only on $d,\epsilon,v,\Phi$ such that the Iitaka volume of $-(K_X+B)$ belongs to $J$.
\end{proof}

\begin{remark}\label{ellpitic surfaces}
    We remark that the last condition in Corollary \ref{MAIN COR3} can not be omitted, even if we consider $(X,0)$ with $X$ being smooth. Indeed, consider $f: X\to \bP^1$ be a smooth elliptic surface, then by \cite[Proposition 5.4]{CHL24}, we have $\Ivol(-K_X)\in \{\frac{1}{m}|m\in \bN\}$, and every $\frac{1}{m}$ can be realized as $\Ivol(-K_X)$ for some rational elliptic surface $X$. In particular, such set of Iitaka volumes is only ACC but not finite.
\end{remark}

\subsection{Boundedness of g-klt trivial fibrations}

We first prove the following result controlling the singularity of certain g-klt pairs. This result can be viewed as a partial generalization of \cite[Theorem 1.6]{Zhu25} to the g-pair setting.

\begin{prop}\label{control singularities for g-pair}
  Let $d,q \in \bN$, $u,v \in \bQ^{>0}$, and $\Phi \subset \bQ ^{\geq 0}$ a DCC set. Let $\cF_{gklt}(d,q,\leq u,v, \Phi)$ be the set of g-pairs $(X, B+M), A$ with data $\pi: \XX \to X$ and $M'$ such that
  \begin{itemize}
      \item $(X, B+M)$ is generalised klt of dimension $d$,   \item the coefficients of $B$ are in $\Phi$,    
      \item $qM'$ is nef Cartier,    
      \item $f: X \to Z$ is a contraction with $K_X + B +M \sim_{\mathbb{Q}} 0/Z$,    
      \item $\kappa(K_X + B +M) = \dim Z$, 
      \item $\Ivol(K_X+B+M)=v$,
      \item we have a generalised adjunction formula    
      $$q(K_X+B+M)\sim qf^*(K_Z+B_Z+M_Z), $$    
      \item there is an effective $\bQ$-divisor $A \geq 0$ on $X$ such that $A\in \Phi$ and over some non-empty open subset of $Z$: $(X, B + tA +M)$ is generalised lc for some $t > 0$ and $A$ is semi-ample, and   
      \item $0<\operatorname{vol}(A|_F) \leq u$ for the general fibres $F$ of $f$.
  \end{itemize}

  There exists $\epsilon>0$ depending only on $d,q,u,v,\Phi$ that if $(X,B+M),A\in \cF_{gklt}(d,q,\leq u,v, \Phi)$, $(X,B+M)$ is generalised $\epsilon$-lc.
\end{prop}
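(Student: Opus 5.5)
We argue in three stages: reduce to the base, bound the base, and lift back. The strategy mirrors \cite[Theorem 1.6]{Zhu25}, with Theorem \ref{g-pair effective canonical bundle formula} replacing the usual effective canonical bundle formula.

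\textbf{Reduction to the base.} By Theorem \ref{g-pair effective canonical bundle formula}, there is $p\in\bN$, depending only on $d,q,\Phi,u$, such that $pM_{Z'}$ is Cartier nef on a high resolution $Z'\to Z$. The coefficients of $B_Z$ lie in a DCC set $\Psi$ depending only on $d,\Phi$, by \cite[Theorem 1.5]{effectiveIitaka}. Since $(X,B+M)$ is g-klt, Lemma \ref{adjunction formula for R coefficients} shows that $(Z,B_Z+M_Z)$ is g-klt. Since $\kappa(K_X+B+M)=\dim Z$, we may pass to the ample model: $K_Z+B_Z+M_Z$ is big, and we replace $Z$ by the lc model of $(Z,B_Z+M_Z)$. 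Then $(Z,B_Z+M_Z)\in\cF_{gklt}(\dim Z,\Psi\cup\{1/p\},v)$.

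\textbf{Boundedness and singularities of the base.} By Theorem \ref{bdd gpair}, this family is bounded. The next aim is to show that $(Z,B_Z+M_Z)$ is generalised $\delta$-lc for some uniform $\delta>0$. Argue by contradiction. If it fails, take a sequence with log discrepancies $a(D_i,Z_i,B_{Z_i}+M_{Z_i})\to 0$. Extract $D_i$ (possible by g-klt MMP, as in the proof of Lemma \ref{g-pair version of 2.48}), so that its coefficient $b_i\to 1$ from below, and form the new coefficient set $\Psi\cup\{b_i\}$, which is DCC. Bounded volume $v$ and DCC coefficients give DCC of volumes \cite[Theorem 1.3]{BVGP}. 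However, the crepant pullback has the same volume $v$, so a finiteness statement is needed instead. I would use boundedness plus Lemma \ref{2.17}: there is a very ample $H$ with $lH-(K_Z+B_Z+M_Z)$ ample and $H^{\dim Z}$ bounded. Then the lc threshold argument of Theorem \ref{bdd glct} forces the g-klt pairs in a bounded family with DCC coefficients and fixed volume to be uniformly $\delta$-lc. Concretely, coefficients near $1$ can be perturbed to $1$, and ACC for glct \cite{effectiveIitaka} then gives a contradiction with klt-ness. I expect this to be the main obstacle: the b-nef part $M_Z$ must be controlled on a bounded model, which is where the bounded Cartier index of $pM_{Z'}$ is used.

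\textbf{Lifting back to $X$.} Let $D$ be a prime divisor over $X$. If $D$ is horizontal over $Z$, then restricting to the general fibre $F$ gives a g-klt Calabi--Yau pair $(F,B_F+M_F)$. By Lemma \ref{g-pair version of 2.48}, this pair is $\epsilon_0$-lc with $\epsilon_0$ depending only on $d,\Phi$, so $a(D,X,B+M)\geq\epsilon_0$. If $D$ is vertical with image $W\subset Z$, apply the definition of the discriminant. Write $t=$ the glct of $f^*P$ over the generic point of a divisor $P$ over $Z$ whose centre is $W$. Then $a(P,Z,B_Z+M_Z)=t\geq\delta$, so $(X,B+M+\delta f^*P)$ is g-sub-lc over the generic point of $P$, where we pass to the induced fibration over a model of $Z$ on which $P$ is a divisor. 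Hence
\[a(D,X,B+M)\geq \delta\cdot\mult_D f^*P.\]
Since $\mult_D f^*P\geq 1$, this gives a lower bound on $a(D,X,B+M)$. Taking $\epsilon=\min(\epsilon_0,\delta)$ finishes the proof, with $\epsilon$ depending only on $d,q,u,v,\Phi$.
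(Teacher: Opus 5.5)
The horizontal and vertical lifting matches the paper's Steps 2--3, with one fix. In the vertical case $P$ must be the \emph{image} of $D$ on a suitable high model $Z'$, which is a divisor there. An arbitrary divisor with centre $W$ will not do: sub-lc over $\eta_P$ constrains only divisors dominating $P$.

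The genuine gap is the reduction on the base. Replacing $Z$ by the lc model $Z_c$ of $(Z,B_Z+M_Z)$ destroys what the lifting needs. First, $f$ no longer maps to $Z_c$. Second, on a common resolution $p^*(K_Z+B_Z+M_Z)=q^*(K_{Z_c}+B_{Z_c}+M_{Z_c})+E$ with $E\geq 0$. So log discrepancies on $Z_c$ are \emph{at least} those on $Z$, and $\delta$-lc of $Z_c$ gives no lower bound on $a(P,Z,B_Z+M_Z)$.

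This cannot be repaired, because without ampleness of $K_Z+B_Z+M_Z$ the statement is false. Here is a counterexample.
\begin{itemize}
\item On a fixed minimal surface $S$ of general type, blow up a point, then $n-1$ points on the exceptional curve.
\item Contract the strict transform of the first exceptional curve, a smooth rational $(-n)$-curve. This gives a klt surface $Z_n$ with a $\frac{1}{n}(1,1)$ point of log discrepancy $2/n$.
\item The pullback of $K_{Z_n}$ to the blow-up is $\sigma^*K_S$ plus an effective $\sigma$-exceptional divisor, so $\vol(K_{Z_n})=K_S^2$ for every $n$.
\item Take $X=Z_n\times C$ with $C$ elliptic, $f$ the first projection, $B=0$, $M'=0$, and $A=Z_n\times\{c\}$.
\item Every hypothesis holds with $d=3$, $q=1$ (since $K_X\sim f^*K_{Z_n}$ and $B_Z=M_Z=0$), $u=1$, $v=K_S^2$, and $\Phi=\{0,1\}$.
\item Yet $X$ has log discrepancy $2/n\to 0$.
\end{itemize}

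The paper's Step 1 simply places $(Z,B_Z+M_Z)$ itself in $\cF_{gklt}(\dim Z,\Psi,v)$. That tacitly uses that $K_Z+B_Z+M_Z$ is ample, i.e.\ that $K_X+B+M$ is semi-ample and defines $f$. This holds in Theorem~\ref{MAINTHM3}, where the proposition is applied. The right move is to add that hypothesis, not to change the base.

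Even granting ampleness, your argument for a uniform $\delta$ on the base does not work. Theorem~\ref{bdd glct} takes generalised $\epsilon$-lc as an \emph{input}, so using it to produce $\delta$-lc is circular. ACC for generalised lc thresholds allows thresholds decreasing to $1$, so pushing coefficients $b_i\to 1$ up to $1$ gives no contradiction. As you noticed, crepant extraction does not change the volume, so DCC of volumes does not help either.

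The paper does not prove this step. It cites \cite[Theorem 1.5]{BVGP}: members of $\cF_{gklt}(\dim Z,\Psi,v)$ are generalised $\delta$-lc with $\delta$ depending only on $\dim Z,\Psi,v$. With that citation and the ampleness hypothesis, your remaining steps, including the fix to the choice of $P$, give the proof.
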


\begin{proof}
    \begin{enumerate} [label=\textsl{Step} \arabic{enumi}.,wide=13pt,itemsep=13pt]
    \item By Theorem \ref{g-pair effective canonical bundle formula}, we see there exists some $p\in \bN$ depending only on $d,q,u,\Phi$ such that $pM'_{Z'}$ is Cartier nef for some high resolution $\ZZ \to Z$. Moreover, by \cite[Theorem 1.5]{effectiveIitaka}, the coefficients of $B_Z$ belong to a fixed DCC set $\Psi$ depending only on $d, q,\Phi$. We may assume $\frac{1}{p}\in \Psi$. Since
    $$\vol(K_Z+B_Z+M_Z)=\Ivol(K_X+B+M)=v,$$
    we see that $(Z,B_Z+M_Z)\in \cF_{gklt}(\dim Z,\Psi,v)$. By \cite[Theorem 1.5]{BVGP}, $(Z, B_Z+M_Z)$ is generalised $\delta$-lc, for some $\delta>0$ depending only on $\dim Z, \Psi,v$, thus depending only on $d,q,u,v,\Phi$.
    \item When $P$ is a prime divisor over $X$ that is horizontal over $Z$, it determines a prime divisor $S$ over the general fiber $F$ of $f$. Since $(F,B_F+M_F)$ is a g-klt Calabi-Yau g-pair, with $B_F\in \Phi$ and $qM_{F'}'$ is nef Cartier for some resolution $F'\to F$, by Lemma \ref{g-pair version of 2.48} we see that there is some $\tau>0$ depending only on $\dim F, \Phi$ that $(F,B_F+M_F)$ is g-$\tau$-lc. This implies that $a(P,X,B+M)=a(S,F,B_F+M_F)\geq \tau$.

    \item When $P$ is a prime divisor over $X$ that is vertical over $Z$, consider some high resolution such that $P$ is a prime divisor on $X'$ and its image on $Z'$ is a prime divisor $E$
    \begin{displaymath}
			\xymatrix{
				\XX \ar[d]_-{f^\prime} \ar[r]^\pi  & X \ar[d]^f            \\
				\ZZ \ar[r] ^\mu & Z   }
		\end{displaymath}
	Let 
		\begin{equation*}
			K_{\XX}+\BB+M'=\pi^*(K_X+B+M) 
		\end{equation*}
		and
		\begin{equation*}
			K_{\ZZ}+B_{\ZZ}+M_{\ZZ}=\mu^*(K_Z+B_Z+M_Z).
		\end{equation*}
		Since $(Z,B_Z+M_Z)$ is generalized $\delta$-lc,
		\begin{equation*}
			a(E,\ZZ,B_{\ZZ}+M_{\ZZ})=a(E,Z,B_Z+M_Z)\geq \delta.
		\end{equation*}
		Therefore, $$\mult_E B_{\ZZ}\leq 1-\delta.$$ By the definition of discriminant divisors, $(\XX,\BB+\delta f^{\prime *}E)$ is sub-lc over the generic point of $E$. This implies that $$\mult_P \BB+\delta \mult_P f^{\prime *}E\leq 1$$ and hence $\mult_P \BB\leq 1-\delta$ as $E$ is a Cartier divisor on $Z'$. Thus 
		\begin{equation*}
			a(P,X,B)=a(P,\XX,\BB)\geq \delta.
		\end{equation*}	
		\item  From the above arguments we see that $(X,B+M)$ is g-$\epsilon$-lc, where $\epsilon:=\min\{\tau, \delta\}$ depending only on $d, q, \Phi, u, v$ indeed.
    \end{enumerate}
\end{proof}

\begin{proof}[Proof of Theorem \ref{MAINTHM3}]
 By Theorem \ref{g-pair effective canonical bundle formula}, we see there exists some $p\in \bN$ depending only on $d,q,u,\Phi$ such that $pM'_{Z'}$ is Cartier nef for some high resolution $\ZZ \to Z$. Moreover, by \cite[Theorem 1.5]{effectiveIitaka}, the coefficients of $B_Z$ belong to a fixed DCC set $\Psi$ depending only on $d, q,\Phi$. We may assume $\frac{1}{p}\in \Psi$. Since    $$\vol(K_Z+B_Z+M_Z)=\Ivol(K_X+B+M)=v,$$    we see that $(Z,B_Z+M_Z)\in \cF_{gklt}(\dim Z,\Psi,v)$. By \cite[Theorem 1.5]{BVGP}, $\cP:=\cF_{gklt}(\dim Z,\Psi,v)$ is a bounded family of g-pairs, depending only on $\dim Z,\Psi,v$ thus depending only on $d,q,u,v,\Phi$.

 The existence of $\epsilon>0$ is obtained by Proposition \ref{control singularities for g-pair}. It suffice to find a bounded $l$ such that $l(K_Z+B_Z+M_Z)$ is very ample. Note that as $(Z, B_Z+M_Z)\in \cF_{gklt}(\dim Z,\Psi,v)$, by \cite[Theorem 1.5]{BVGP}, the coefficients of $B_Z$ actually belong to a fixed finite set, thus $l(K_Z+B_Z+M_Z)$ is integral for some bounded $l$. As $\cP$ is a bounded family, after replacing $l$ by a bounded multiple, we may assume $l(K_Z+B_Z+M_Z)$ is ample Cartier.
Now by \cite[Lemma 2.15]{Zhu25}, after replacing $l$ by a bounded multiple again, we may assume $l(K_Z+B_Z+M_Z)$ is very ample, where $l$ depends only on $d,u,v,q,\Phi$.
\end{proof}

Finally we state and prove an $\bR$-divisor version of Corollary \ref{MAINCOR4}.

\begin{prop}
  Let $d,\in \bN$, $v\in \bR^{>0}$, $\Phi\subset \bR^{\geq 0}$ a DCC set, and $\Phi'\subset \bR^{\geq 0}$ a finite set. Consider the set of $(X,B+M)$ such that  \begin{itemize}      
  \item $(X,B+M)$ is a projective generalised klt pair of dimension $d$ with data $\pi: \XX \rightarrow X$ and nef part $\MM$,      
  \item $B\in \Phi$ and $\MM=\sum \mu_jM'_j$ where $\mu_j\in \Phi'$, $M'_j$ is nef Cartier for any $j$,     
  \item $K_X+B+M$ is semi-ample defining a contraction $f:X\rightarrow Z$,      
  \item $\mathrm{Ivol}(K_X+B+M)=v$, and     
  \item $X$ is of Fano type over $Z$.  
  \end{itemize}  
  
  Then there is a bounded family of g-klt pairs $\cP$, and a positive number $\epsilon \in \bR^{>0}$ depending only on $d,,v,\Phi,\Phi'$ such that $(Z,B_Z+M_Z)$ belongs to $\cP$, and $(X, B+M)$ is generalised $\epsilon$-lc. Moreover, the set of such $X$ is also bounded.  
\end{prop}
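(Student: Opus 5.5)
The argument runs parallel to the proofs of Theorem \ref{MAINTHM3} and Proposition \ref{control singularities for g-pair}, with Lemma \ref{g-pair Fanotype eff adjunction} replacing Theorem \ref{g-pair effective canonical bundle formula}.

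\textbf{Step 1: effective adjunction.} By Lemma \ref{g-pair Fanotype eff adjunction}, there is a finite set $\Psi\subset\bR^{\geq0}$ depending only on $d,\Phi,\Phi'$ giving an adjunction formula
$$K_X+B+M\sim_\bR f^*(K_Z+B_Z+M_Z),$$
with $M_{\ZZ}=\sum r_iM_{i,\ZZ}$ on a high resolution, where $r_i\in\Psi$ and each $M_{i,\ZZ}$ is nef Cartier. By \cite[Theorem 1.5]{effectiveIitaka}, the coefficients of $B_Z$ lie in a DCC set $\Psi'$ depending only on $d,\Phi,\Phi'$. Since $(X,B+M)$ is g-klt, Lemma \ref{adjunction formula for R coefficients} shows that $(Z,B_Z+M_Z)$ is g-klt. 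Because $K_X+B+M$ is semi-ample and defines $f$, the divisor $K_Z+B_Z+M_Z$ is ample with volume $v$. Hence $(Z,B_Z+M_Z)\in\cF_{gklt}(\dim Z,\Psi\cup\Psi',v)$.

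\textbf{Step 2: boundedness of the base and lc bound on $Z$.} Theorem \ref{bdd gpair} shows that this family is bounded; take $\cP$ to be this family. By \cite[Theorem 1.5]{BVGP}, each such $(Z,B_Z+M_Z)$ is generalised $\delta$-lc for some $\delta>0$ depending only on $d,v,\Phi,\Phi'$.

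\textbf{Step 3: $\epsilon$-lc on $X$.} The argument of Proposition \ref{control singularities for g-pair} goes through verbatim, since it uses only the adjunction formula and the $\delta$-lc property of the base.
\begin{itemize}
\item Horizontal prime divisors $P$ over $X$ restrict to the general fibre $F$, where $(F,B_F+M_F)$ is g-klt Calabi--Yau with DCC coefficients. Lemma \ref{g-pair version of 2.48} gives a uniform $\tau>0$ with $a(P,X,B+M)\geq\tau$.
\item For a vertical $P$ mapping onto a prime divisor $E$ of $\ZZ$, we have $\mult_E B_{\ZZ}\leq1-\delta$. The definition of the discriminant then gives $\mult_P\BB\leq1-\delta$.
\end{itemize}
Taking $\epsilon=\min\{\tau,\delta\}$ finishes this step.

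\textbf{Step 4: boundedness of $X$.} This is the main obstacle. The plan is to use \cite[Theorem 1.2]{FTF}, which bounds Fano type fibrations over a bounded base whose total space is $\epsilon$-lc. This requires the following inputs.
\begin{itemize}
\item $Z$ is bounded by Step 2. Since the coefficients of $B_Z$ lie in a finite set and the family is bounded, a bounded $l$ makes $l(K_Z+B_Z+M_Z)$ ample Cartier. Using \cite[Lemma 2.15]{Zhu25}, we may take it very ample, which provides a very ample divisor $H_Z$ of bounded degree.
\item $X$ is $\epsilon$-lc: as $B+M\geq0$ modulo the nef part, $(X,0)$ is $\epsilon$-lc by Step 3.
\item $-K_X$ is big over $Z$ and $K_X+B+M\sim_\bR f^*(\text{bounded ample})$. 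Then $-(K_X+B+M)+f^*(\text{bounded ample})$ gives the required relative anticanonical-type control.
\end{itemize}

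The subtle point is that \cite{FTF} is typically stated for usual pairs $(X,B)$ with $-(K_X+B)$ big or nef over $Z$, not for generalised pairs. I would first try to absorb $M$ into a usual boundary. On $X'$, which is of Fano type over $Z$, the divisors $M'_j$ are semi-ample over $Z$. Choose general members $0\leq D'_j\sim_{\bR,Z}M'_j$, push them down to $X$, and set $\Delta=B+\sum\mu_jD_j$. Then $(X,\Delta)$ is klt, $\epsilon/2$-lc after perturbation, and $K_X+\Delta\sim_\bR0/Z$. With this replacement, \cite[Theorem 1.2]{FTF} should give boundedness of $X$. The boundedness of the base and the uniform lc bound would then be inherited from Steps 2 and 3.
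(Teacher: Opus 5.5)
Your Steps 1--3 are the paper's argument: Lemma~\ref{g-pair Fanotype eff adjunction} in place of Theorem~\ref{g-pair effective canonical bundle formula}, boundedness of $(Z,B_Z+M_Z)$ via Theorem~\ref{bdd gpair}, and the horizontal/vertical discrepancy estimate of Proposition~\ref{control singularities for g-pair}. Step 4, however, does not work as written.

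First, this statement has real coefficients. The coefficients of $B_Z$ and the weights $r_i$ in $M_{\ZZ}=\sum r_iM_{i,\ZZ}$ are in general irrational. So no integer $l$ makes $l(K_Z+B_Z+M_Z)$ Cartier, and \cite[Lemma 2.15]{Zhu25} cannot be invoked. This is why the $\bR$-statement, unlike Corollary~\ref{MAINCOR4}, asserts no very ampleness. What you actually need is a very ample $H$ on $Z$ of bounded degree together with a bounded $l$ such that $lH-(K_Z+B_Z+M_Z)$ is ample. The paper gets $H$ with $H^{\dim Z}\le r$ and $(K_Z+B_Z+M_Z)\cdot H^{\dim Z-1}\le r$ from the boundedness of $\cP$, and then $l$ from Lemma~\ref{2.17}.

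Second, and more seriously, your absorption of the nef part destroys exactly this control. Choosing $D'_j\sim_\bR M'_j$ only over $Z$ means $D'_j\sim_\bR M'_j+(f\circ\pi)^*N_j$ for some $\bR$-divisor $N_j$ on $Z$ about which nothing is known. Hence
$K_X+\Delta\sim_\bR f^*(K_Z+B_Z+M_Z+\sum_j\mu_jN_j)$,
and the new base divisor is no longer dominated by a bounded very ample divisor. The relation $K_X+\Delta\sim_\bR 0/Z$ alone is not what Birkar's theorem needs. To rescue the reduction you would need $\Delta\sim_\bR B+M$ globally, or an effective bound on the $N_j$, and your argument provides neither.

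Relative semi-ampleness of $M'_j$ on $X'$ is also not automatic. A resolution of a variety of Fano type over $Z$ need not be of Fano type over $Z$; for example, $\mathbb{P}^2$ blown up at nine very general points.

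The paper sidesteps all of this. It applies Birkar's boundedness of Fano type fibrations, cited as \cite[Theorem 2.3]{FTF}, directly to the generalised $\epsilon$-lc pair $(X,B+M)\to Z$. It uses the ampleness of $lH-(K_Z+B_Z+M_Z)$ and the lower bound $B\ge c$ on nonzero coefficients, which comes from the DCC property of $\Phi$.
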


\begin{proof}
    The treatment is similar to the proof of Theorem \ref{MAINTHM3}. We use Lemma \ref{g-pair Fanotype eff adjunction} instead of Theorem \ref{g-pair effective canonical bundle formula}. We see $(Z, B_Z+M_Z)\in \cF_{gklt}(\dim Z,v, \Psi)$, which forms a bounded family of g-pairs. The existence of $\epsilon>0$ also follows from Proposition \ref{control singularities for g-pair}, where the proof is almost the same for $\bR$-divisors.

    So we only need to show $X$ also forms a bounded family. As $\cF_{gklt}(\dim Z,v, \Psi)$ is a bounded set, there is a very ample divisor $H$ on $X$ and a bounded $r\in \bN$, such that $H^{\dim Z}\leq r$ and $(K_Z+B_Z+M_Z)\cdot H^{\dim Z-1}\leq r$. Then by Lemma \ref{2.17}, we see there is a bounded $l\in \bN$ such that $lH-(K_Z+B_Z+M_Z)$ is ample. Since $(X,B+M)$ is generalised $\epsilon$-lc and $B\in \Phi$ is a DCC set, we have $B\geq c$ for some fixed $c>0$. By \cite[Theorem 2.3]{FTF}, such $X$ indeed forms a bounded family.

\end{proof}

\section{Examples and further discussions}

\begin{example}
    The restriction on $\vol(A|_F)\leq u$ in Definition \ref{DEF2} cannot be removed. Indeed, as in Remark \ref{ellpitic surfaces}, if we take $B=0$ and $M=-2K_X$ there, the set of Iitaka volumes $\Ivol(K_X+B+M)=\Ivol(-K_X)$ is not DCC.
\end{example}

\begin{example}
    The assumption $\kappa(K_X+B+M)=\dim Z$ in Definition \ref{DEF2} cannot be removed. Indeed, let $Y\to \bP^1$ be a rational elliptic surface as in Remark \ref{ellpitic surfaces}, and let $C$ be a fixed elliptic curve. Then let $X:=Y\times C$ and $f:X\to Y$ be the projection to $Y$. We have $K_X=f^*(K_Y)$ and if $A$ is a section of $X$ over $Y$, $\vol(A|_{C\times\{y\}})=1$ is fixed. Let $B=0$ and $M=-2K_X$, and $\Ivol(K_X+B+M)=\Ivol(-K_X)=\Ivol(-K_Y)$ which is not DCC by the previous example.
\end{example}

\begin{example}
    It's important to view $(X,B+M)$ as a g-pair itself, not just a g-pair over $Z$. That is, we require $M'$ to be actually nef, not just nef over $Z$. Otherwise, the set of Iitaka volumes may behave very strangely.

    The following example comes from \cite[Example 5.3]{CHL24}. Take $Y=\bP(p,q,r)$ such that $Y$ is well-formed. Let $Z\to Y$ be the minimal resolution of $Y$. Take $X:=Z\times C$ where $C$ is an elliptic curve, let $f:X\to Z$ be the projection to $Z$, and $A$ a section of $X$ over $Z$. Then $X$ is smooth and $\vol(A|_{C\times\{z\}})=1$ for any $z\in Z$, but if we take $B=0$ and $M=-2K_X=2f^*(-K_Z)$ is not nef globally, but still nef over $Z$. Then $$\Ivol(K_X+B+M)=\Ivol(-K_X)=\vol(-K_Z)=\vol(-K_Y) = \frac{(p+q+r)^2}{pqr}$$
    forms a dense subset in $\bR^{>0}$ as $p,q,r$ vary.
\end{example}

\begin{example}
    One thing that makes the g-pair case difficult is we don't have similar result as \cite[Lemma 7.4]{BVGP}. That is, even we have $\vol(A|_F)\leq u$ on the general fiber, we cannot find a bounded $q\in \bN$ such that $q(K_X+B+M)\sim qf^*L_Z$ for some $\bQ$-divisor $L_Z$ in general. 

    For example, take $Z=\bP^1$ and $C$ a fixed elliptic curve. Let $X=Z\times C$, denote $f:X\to Z$ and $g:X\to C$ to be the corresponding projection. As before, take $A$ to be a section of $X\to Z$. Let $B=0$ and $M:=g^*R$ where $R$ is a torsion divisor on $C$ with torsion index $m$, that is, $mR\sim 0$ but $lR$ is not linearly equivalent to $0$ for any $l<m$. Such torsion divisor always exists. Now $K_X+B+M=f^*K_Z+g^*R$ and $K_X+B+M\sim_\bQ0/Z$. For any $q\in \bN$ such that $q(K_X+B+M)\sim qf^*(K_Z+B_Z+M_Z)$, restricting to the general fiber $F\cong C$ of $f$ yields $q(K_F+B_F+M_F)=qR\sim 0$. Thus $m|q$ and as $m$ can tend to infinity, there is no such bounded $q$.
\end{example}

However, as Iitaka volume is a numerical invariant, it is expected that the assumption on $q(K_X+B+M)\sim qf^*(K_Z+B_Z+M_Z)$ may not be necessary. Theorem \ref{MAINTHMnew} is one direction, where we restrict ourselves to g-klt pairs. For another direction, the following result shows that at least for generalised lc surfaces, we don't require such an additional assumption.

\begin{prop}
    Let $\cI_{glc}(2,\Phi,q,u)$ be a set of projective generalised surface pairs $(X,B+M)$ such that

    \begin{itemize}
        \item $(X,B+M)$ is generalised lc of dimension $2$,
        \item $B\in \Phi$,
        \item $qM$ is nef Cartier,
        \item $f:X\to Z$ is a contraction to a curve $Z$ with $K_X+B+M\sim_\bQ 0/Z$
        \item there is an integral divisor $A$ on $X$ that is ample over $Z$ such that $\vol(A|_F)=u$ on the general fiber $F$ of $f$
    \end{itemize}

    If $u\in \bQ^{>0}$ and $\Phi\subset \bQ^{\geq 0}$ is a DCC set, then 
    $$\{\Ivol(K_X+B+M)|(X,B+M)\in \cI_{glc}(2,\Phi,q,u)\}$$
    satisfy the DCC property.
\end{prop}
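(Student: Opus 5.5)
The plan is to split according to the singularities of $(X,B+M)$ over the generic point of $Z$, and in each case reduce to a result already proved. Since $Z$ is a curve, $\Ivol(K_X+B+M)$ is either $0$ or $\deg(K_Z+B_Z+M_Z)$. If $\kappa(K_X+B+M)\le 0$, the value is $0$, $1$ or $-\infty$-type degenerate, so it lies in a finite set. We may therefore assume $\kappa(K_X+B+M)=1=\dim Z$. The general fibre $F$ is a smooth projective curve with $K_F+B_F+M_F\equiv 0$, $B_F\in\Phi$, $qM_F$ nef Cartier and $\deg A|_F=u$.

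\textbf{Case 1: $(X,B+M)$ is generalised klt over $\eta_Z$.} Here I would apply Proposition \ref{effectiveadjunctionwhensingularitiesarecontrolled} directly. Its hypotheses only require generalised lc globally and generalised klt over $\eta_Z$, with an integral $A$ satisfying $0<\vol(A|_F)\le u$. It gives a bounded $p$ such that $pM_{Z'}$ is nef Cartier. By \cite[Theorem 1.5]{effectiveIitaka}, $B_Z$ has coefficients in a fixed DCC set. Then \cite[Theorem 1.3]{BVGP} applied to $(Z,B_Z+M_Z)$ gives the DCC of $\vol(K_Z+B_Z+M_Z)$. This case includes the elliptic fibre case: there $K_F=0$ forces $B_F=0$ and $\deg M_F=0$, so the pair is automatically klt along $F$.

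\textbf{Case 2: $(X,B+M)$ is not generalised klt over $\eta_Z$.} By the remark just made, $F$ must then be rational, so $F\cong\bP^1$ and $B_F$ has a point of coefficient $1$. By global ACC \cite[Theorem 1.6]{effectiveIitaka}, the coefficients of $B_F$ lie in a finite set. Together with $qM'$ Cartier, this gives a bounded $l$ such that $l(K_F+B_F+M_F)$ is an integral divisor of degree $0$ on $\bP^1$, and hence $l(K_F+B_F+M_F)\sim 0$ because $\mathrm{Pic}(\bP^1)=\bZ$ is torsion free. Exactly as in the proof of Lemma \ref{g-pair Fanotype eff adjunction}, this yields $l(K_X+B+M)\sim lf^*(K_Z+B_Z+M_Z)$ for a bounded $l$.

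Next I would make $A$ fit Definition \ref{DEF2}. Replace $A$ by a general member of $|A+f^*H|$ for a sufficiently ample $H$ on $Z$. This divisor is effective and integral, still ample over $Z$, and has the same $\vol(A|_F)=u<u+1$. Over an open subset of $Z$, its general choice avoids the non-klt centres, so $(X,B+tA+M)$ is generalised lc for small $t$. Enlarging $\Phi$ to $\Phi\cup\bN$ keeps it DCC. Then $(X,B+M)\in\cI_{glc}(2,\Phi\cup\bN,l,<u+1)$, and Theorem \ref{MAIN THM2} gives the DCC.

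The main point to check is Case 1. We must verify that the proof of Proposition \ref{effectiveadjunctionwhensingularitiesarecontrolled} in dimension $2$ really only uses the klt condition generically. Concretely, the $\epsilon$-lc reduction in its Step 1 only involves horizontal divisors, and after that reduction we work with $\dim Z=1$. If this step causes trouble, a fallback is to handle the elliptic case directly: there $M_F\equiv 0$ and $B^h=0$, and the vertical data on a relatively minimal elliptic surface are controlled by Kodaira's classification, which gives bounded fibre multiplicities as in Step 5 of Lemma \ref{curve case}.
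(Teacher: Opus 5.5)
Your proof is correct, but it is organised differently from the paper's. The paper splits by the genus of $F$. For $F\cong\bP^1$ it just asserts that $X$ is of Fano type over $Z$. For elliptic $F$ it argues by hand: $B_F=0$ and $M_F\sim_\bQ 0$, so $M$ is $\bQ$-linearly equivalent to a vertical divisor, and the fibrewise negativity argument upgrades this to $M\sim_\bQ 0/Z$ (hence $K_X+B\sim_\bQ 0/Z$). Writing $M\equiv\alpha f^*z$, the Iitaka volume is then the usual-pair contribution of $(X,B)\to Z$ plus $\alpha$, with $q\alpha u=qM\cdot A\in\bZ$.

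You split instead by klt-ness over $\eta_Z$, and send the elliptic case through Proposition \ref{effectiveadjunctionwhensingularitiesarecontrolled} and \cite[Theorem 1.3]{BVGP}. The paper's computation is elementary and shows exactly why the torsion obstruction in the example with $M=g^*R$ is invisible to volumes: only the numerical class of $M$ over $Z$ matters, and the integer $qM\cdot A$ discretises it. Your route black-boxes this through a proposition whose Step 2 performs essentially the same reduction, with heavier machinery around it. Your worry about its Step 1 is moot, since its statement only assumes klt over $\eta_Z$. Your Kodaira-classification fallback would not suffice as sketched: bounding fibre multiplicities does not control the part of $M_Z$ coming from $M$, which is what $qM\cdot A\in\bZ$ does.

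On the other side, your Case 2 supplies what the paper's one-line rational case omits. A g-lc $X$ with $\bP^1$ fibres can have lc non-klt singularities in special fibres, so it need not be of Fano type over $Z$. Moreover, the paper's Fano-type results (Lemma \ref{g-pair Fanotype eff adjunction}, Corollary \ref{MAIN COR1}) assume g-klt. Your bounded $l$ from $\mathrm{Pic}(\bP^1)=\bZ$, together with replacing $A$ by a general member of $|A+f^*H|$, is exactly what is needed to enter Theorem \ref{MAIN THM2}.

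One detail when passing from $F$ to $X$: vertical coefficients of $B$ need not lie in $\frac{1}{l}\bZ$. So do the rational-function step with $l(K_X+B^h+M)$ and absorb the vertical remainder via \cite[Lemma 2.5]{CHL23}, as in Lemma \ref{finite effective adjunction}.

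This Case 2 argument in fact covers every rational fibration, klt or not. So Proposition \ref{effectiveadjunctionwhensingularitiesarecontrolled} is only really needed in the elliptic case, where the paper's direct computation can replace it. You also handle $\kappa\le 0$, which the paper passes over.
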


\begin{proof}
    On the general fiber $F$, we have $K_F+B_F+M_F\sim _\bQ 0$, note that $(F, B_F+M_F)$ is a g-lc curve, $F$ is actually smooth, and $g(F)\leq 1$. If $F=\bP^1$, then $X$ is of Fano type over $Z$ and everything is easy. So we may assume $F$ is an elliptic curve, thus $K_F=B_F=0$. In particular, $M_F\sim _{\bQ}0/Z$. Thus $M\sim_\bQ  P$ where $P$ is a vertical/$Z$ divisor. We claim that $M\sim_\bQ 0/Z$. By adding some negative $\bQ$-linear combination of the fibers $\sum b_i F_i$, we may assume $M\sim P':=P+\sum b_i F_i/ Z$ and $P'\leq 0$. But then $P'$ must be $0$, otherwise since $P'$ is nef over $Z$, there is some component $C$ of some fiber that intersects with $\Supp P'$ but not contained in $\Supp P'$. This yields a contradiction as $(P'\cdot C)<0$. Thus we have $K_X+B\sim_\bQ 0/Z$ and $M\sim_\bQ 0/Z$.

    It suffice to show that if $M\equiv \alpha f^* z$ where $z$ is a general point of $Z$, then $\alpha$ belongs to some fixed DCC set. Indeed, as then $\Ivol(K_X+B+M)=\deg (K_Z+B_Z+M_Z)= 2g(Z)-2+\deg (B_Z) +\alpha$ will belong to a DCC set.

    Note that $\vol(A|_F)=u$ implies $(A\cdot f^*z)=u$ where $z\in Z$ is a general point. Then $(qM\cdot A)= q\alpha u$, where $(qM\cdot A)\in \bZ^{>0}$ as $qM$ is Cartier. This implies $\alpha\in \frac{1}{qu}\bZ^{>0}$ belongs to a DCC set indeed.
\end{proof}

\hfill

\providecommand{\bysame}{\leavevmode\hbox to3em{\hrulefill}\thinspace}
\providecommand{\MR}{\relax\ifhmode\unskip\space\fi MR }
\providecommand{\MRhref}[2]{%
	\href{http://www.ams.org/mathscinet-getitem?mr=#1}{#2}
}
\providecommand{\href}[2]{#2}


\begin{thebibliography}{HMX18}
	
	\bibitem[Amb99]{ambroadjunctionformula}
	F.~Ambro, \emph{{The Adjunction Conjecture and its applications}},
	arXiv:math/9903060v3 (1999).
	
	\bibitem[Amb05]{ambromoduli}
	Florin Ambro, \emph{The moduli {$b$}-divisor of an lc-trivial fibration},
	Compos. Math. \textbf{141} (2005), no.~2, 385--403.

    \bibitem[BCHM10]{BCHM10}
	C.~Birkar, P.~Cascini, C.~D.~Hacon, and J.~McKernan, \emph{{Existence of minimal models for varieties of log general type}}, J. Amer. Math. Soc. \textbf{23} (2010), no.~2, 405--468.
	
	\bibitem[BH22]{VGP}
	C.~Birkar and C.~D. Hacon, \emph{Variations of generalised pairs},
	arXiv:2204.10456 (2022).
	
	\bibitem[Bir12]{Bir12}
	C.~Birkar, \emph{Existence of log canonical flips and a special {LMMP}}, Pub.
	Math. IHES. \textbf{115} (2012), 325--368.
	
	\bibitem[Bir19]{BABI}
	C.~Birkar, \emph{{Anti-pluricanonical systems on Fano varieties}}, Ann. of Math.
	(2) \textbf{190} (2019), no.~2, 345 -- 463.
	
	\bibitem[Bir21a]{BVGP}
	C.~Birkar, \emph{Boundedness and volume of generalised pairs}, arXiv:2103.14935
	(2021).
	
	\bibitem[Bir21b]{BABII}
	C.~Birkar, \emph{{Singularities of linear systems and boundedness of Fano
			varieties}}, Ann. of Math. (2) \textbf{193} (2021), no.~2, 347--405.
	
	\bibitem[Bir22]{moduliofalgebraicvarieties}
	C.~Birkar, \emph{Moduli of algebraic varieties}, arXiv:2211.11237 (2022).
	
	\bibitem[Bir23a]{GOPV}
	C.~Birkar, \emph{Geometry of polarised varieties}, Pub. Math. IHES. \textbf{137}
	(2023), 47–105.
	
	\bibitem[Bir23b]{singularitiesonFanofibrations}
	C.~Birkar, \emph{{Singularities on Fano fibrations and beyond}}, arXiv:2305.18770
	(2023).
	
	\bibitem[Bir24]{FTF}
	C.~Birkar, \emph{{Boundedness of Fano type fibrations}}, Ann. Sci. \'Ec. Norm.
	Sup\'er. (4) \textbf{57} (2024), no.~3, 787--840.
	
	\bibitem[BZ16]{effectiveIitaka}
	C.~Birkar and D-Q. Zhang, \emph{{Effectivity of Iitaka fibrations and
			pluricanonical systems of polarized pairs}}, Pub. Math. IHES. \textbf{123}
	(2016), no.~1, 283--331.
	
	\bibitem[Che23]{CGD20}
	G.~Chen, \emph{Boundedness of $n$-complements for generalized pairs}, Eur. J.
	Math. \textbf{9} (2023), no.~4, 95.
	
	\bibitem[CHL23]{CHL23}
	G.~Chen, J.~Han, and J.~Liu, \emph{On effective log {Iitaka} fibrations and
		existence of complements}, Int. Math. Res, Not. \textbf{2024} (2023), no.~10,
	8329--8349.

    \bibitem[CHL24]{CHL24}
	G.~Chen, J.~Han, and W.~Liu, \emph{{On the litaka volumes of log canonical surfaces and threefolds}}, arXiv:2407.07391 (2024).
	
	\bibitem[Cho08]{invariantIitakadimension}
	S.~R. Choi, \emph{The geography of log models and its applications}, Ph.D.
	thesis, Johns Hopkins University, 2008.
	
	
	
	\bibitem[Has19]{Hashizume19}
	K.~Hashizume, \emph{Remarks on special kinds of the relative log minimal model
		program}, Manuscripta Math. \textbf{160} (2019), no.~3-4, 285--314.
	
	
	
	
	\bibitem[HLX23]{effectiveadjunctionformulawithrealcoefficients}
	J.~Han, J.~Liu, and Q.~Xue, \emph{{On the equivalence between the effective
			adjunction conjectures of Prokhorov-Shokurov and of Li}}, arXiv:2312.15397,
	to appear in Algebra Number Theory (2023).
	
	\bibitem[HMX13]{birationalatuomorphism}
	C.~D. Hacon, J.~M\textsuperscript{c}Kernan, and C.~Xu, \emph{On the birational
		automorphisms of varieties of general type}, Ann. of Math. (2) \textbf{177} (2013), no.~3,
	1077--1111.
	
	\bibitem[HMX14]{ACCLCT}
	C.~D. Hacon, J.~M\textsuperscript{c}Kernan, and C.~Xu, \emph{{ACC for log
			canonical thresholds}}, Ann. of Math. (2) \textbf{180} (2014), no.~2,
	523--571.
	
	\bibitem[HMX18]{HMX18}
	C.~D. Hacon, J.~M\textsuperscript{c}Kernan, and C.~Xu, \emph{Boundedness of
		moduli of varieties of general type}, J. Eur. Math. Soc. \textbf{20} (2018),
	no.~4, 865--901.
	
	\bibitem[HX13]{HX13}
	C.~D. Hacon and C.~Xu, \emph{Existence of log canonical closures}, Invent.
	Math. \textbf{192} (2013), no.~1, 161--195.

    \bibitem[IP99]{fano type}
	V.~A.~Iskovskikh and Y.~G.~Prokhorov, \emph{{Fano varieties}}, in Algebraic Geometry, V, Encycl. Math. Sci. \textbf{47}, Springer-Verlag, Berlin, 1999, pp.~1--247.
	
	\bibitem[JLX22]{JLX22}
	J.~Jiao, J.~Liu, and L.~Xie, \emph{On generalized lc pairs with b-log abundant
		nef part}, arXiv:2202.11256 (2022).

    \bibitem[Kaw98]{kawamataadjunctionformula}
	Yujiro Kawamata, \emph{Subadjunction of log canonical divisors. {II}}, Amer. J.
	Math. \textbf{120} (1998), no.~5, 893--899.
	
	
	\bibitem[Kol93]{effectivebasepointfree}
	J.~Koll{\'a}r, \emph{Effective base point freeness}, Math. Ann. \textbf{296}
	(1993), no.~4, 595--605.
	
	\bibitem[Kol23]{familiesofgeneraltype}
	J.~Koll{\'a}r, \emph{Families of varieties of general type}, Cambridge University
	Press, 2023.
	
	\bibitem[Li24a]{Iitakavolume}
	Z.~Li, \emph{Boundedness of the base varieties of certain fibrations}, J. Lond.
	Math. Soc. (2) \textbf{109} (2024), no.~2, e12871.
	
	\bibitem[Li24b]{effectiveadjunctionconjecture}
	Z.~Li, \emph{A variant of the effective adjunction conjecture with
		applications}, J. Pure Appl. Algebra \textbf{228} (2024), no.~6, 107626, 22.

    \bibitem[LX23]{LX23}
	J.~Liu and L.~Xie, \emph{{Semi-ampleness of NQC generalized log canonical pairs}}, Adv. Math. \textbf{427} (2023), 109126.    

	
	
	\bibitem[Zhu25]{Zhu25}
	M.~Zhu, \emph{{Boundedness of stable minimal models with klt singularities}}, Int. Math. Res. Not. \textbf{2025} (2025), no.~2, rnae293.
	
	
	
	
	
\end{thebibliography}
\end{document}